\newtheorem{theorem}{Theorem}[section]
\newtheorem{corollary}[theorem]{Corollary}
\newtheorem{lemma}[theorem]{Lemma}
\newtheorem{prop}[theorem]{Proposition}
\theoremstyle{remark}
\newtheorem{remark}[theorem]{Remark}
\newcommand{\eps}{\varepsilon}
\newcommand{\norm}[1]{\lVert#1\rVert} % norm
\DeclareMathOperator*{\dist}{dist} %essential supremum
\DeclareMathOperator*{\supp}{supp} %supremum
\newcommand*{\R}{\ensuremath{\mathbb{R}}}
\let\div\undefined\DeclareMathOperator{\div}{div} %divergence
\author[Colombo]{Maria Colombo}
\address{EPFL SB, Station 8, 
CH-1015 Lausanne, Switzerland
}\address{Institute for advanced study, 1 Einstein Dr, Princeton, NJ 08540
}
\email{maria.colombo@epfl.ch}
\author[Haffter]{Silja Haffter
}
\address{EPFL SB, Station 8, 
CH-1015 Lausanne, Switzerland
}
\email{silja.haffter@epfl.ch}
\begin{document}
\title[Global regularity for the wave equation with slightly supercritical power]{Global regularity for the nonlinear wave equation with slightly supercritical power% without spherical symmetry
}
\maketitle
%\tableofcontents
\begin{abstract} We consider the defocusing nonlinear wave equation $\Box u = \lvert u \rvert^{p-1} u$ in $\R^3 \times[0,\infty)$. We prove that  for any initial datum with a scaling-subcritical norm bounded by $M_0$ the equation is globally well-posed for $p=5+\delta$ where $\delta \in (0,\delta_0(M_0))$. 
\end{abstract}

\section{Introduction}
We consider the Cauchy problem for the nonlinear defocusing wave equation on $\mathbb{R}^3$, that is
\begin{align} \label{eq:nlw}
\begin{cases} \Box u &= \lvert u \rvert^{p-1} u \\
(u, \partial_t u)(\cdot, 0)&= (u_0, u_1) \in (\dot{H}^1 \cap \dot{H}^2) \times H^1 \,  ,
\end{cases}
\end{align}
where $u: \mathbb{R}^3 \times I \to \mathbb{R}$, $p>1$ and $\Box=-\partial_{tt} + \Delta$ is the D'Alembertian. For sufficiently regular solutions of \eqref{eq:nlw} the energy
\begin{equation}
E(u)(t):= \int \frac12 \lvert \partial_t u\rvert^2 + \frac 12 \lvert \nabla u \rvert^2 + \frac{\lvert u \rvert^{p+1}}{p+1} \, \mathrm{d}x
\end{equation}
is conserved, i.e. $E(t)=E$. Moreover, there is a natural scaling associated to \eqref{eq:nlw}: For $\lambda>0$ the map
\begin{equation}\label{eq:scaling}
 u \mapsto u_\lambda(t,x)= \lambda^{\frac{2}{p-1}} u(\lambda x, \lambda t)
\end{equation}
preserves solutions of \eqref{eq:nlw}. Correspondingly, the energy rescales like $E(u_\lambda)(t)= \lambda^\frac{5-p}{p-1} E(u)(t)$ and hence the equation is energy-supercritical for $p>5$. Our goal is to show that given any (possibly large) initial data $(u_0, u_1)$, the supercritical nonlinear defocusing wave equation \eqref{eq:nlw} is globally well-posed at least for an open interval of exponents $p \in [5, 5 + \delta_0)$.

\begin{theorem}\label{thm:main} Let $\norm{(u_0, u_1)}_{\dot H^1 \cap \dot H^2 \times H^1} \leq M_0\, .$ Then there exists $\delta_0= \delta_0(M_0)>0$ such that for any $\delta \in (0, \delta_0)$ there exists a global solution $u$ of \eqref{eq:nlw} with $p=5+\delta$ from the initial data $(u_0, u_1)$. Moreover, there exists a universal constant $C> 1$ such that for any time $t$
\begin{equation}\label{eq:estimateonSobolev}
\norm{(u, \partial_t u)(t)}_{\dot H^1 \cap \dot H^2 \times H^1} \leq \norm{(u_0, u_1)}_{\dot H^1 \cap \dot H^2 \times H^1} e^{C(1+(CE(u))^{CE(u)^{352}})} 
\end{equation}
and we have the global spacetime bound
\begin{equation}\label{eq:estimateonspacetime}
\norm{u}_{L^{2(p-1)}(\mathbb{R}^3 \times \mathbb{R})} \leq C( 1+(CE(u))^{CE(u)^{352}} )\, .
\end{equation}
In particular, the solution scatters as $t \rightarrow \pm \infty\,.$
\end{theorem}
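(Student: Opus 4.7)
My plan is to treat the nonlinearity $|u|^{p-1}u$ for $p=5+\delta$ as a perturbation of the energy-critical one $|u|^4 u$, whose Cauchy problem is globally well-posed and scatters with a quantitative bound $\|v\|_{L^{10}_{t,x}(\R^{3+1})}\leq F(E(v))$ due to Grillakis, Shatah--Struwe and Bahouri--G\'erard (with $F$ of explicit tower-type growth after Tao). By standard local well-posedness in $\dot H^1\cap \dot H^2$ and persistence of regularity for polynomial wave equations, both global existence and the Sobolev bound \eqref{eq:estimateonSobolev} reduce to the a priori Strichartz bound \eqref{eq:estimateonspacetime}. I would therefore work on a putative maximal existence interval $[0,T]$ and establish \eqref{eq:estimateonspacetime} by a continuity/bootstrap argument.

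The quantitative engine is the Morawetz inequality
\[
\int_0^T \int_{\R^3} \frac{|u(t,x)|^{p+1}}{|x|}\,dx\,dt \leq C E(u),
\]
valid uniformly in $p$ near $5$. Combined with a pigeonhole argument it lets one partition $[0,T]$ into $N$ subintervals $I_k$, with $N$ controlled by a function of $E$, and pick on each a \emph{good} time $t_k\in I_k$ at which the profile of $u(t_k)$ is moderate. On $I_k$ I would compare $u$ with the critical solution $v_k$ of $\Box v_k=|v_k|^4 v_k$ with data $(v_k,\partial_t v_k)(t_k)=(u,\partial_t u)(t_k)$; the critical scattering bound gives $\|v_k\|_{L^{10}_{t,x}(I_k\times\R^3)}\leq F(E)$. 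A long-time Strichartz perturbation lemma (the wave analogue of Colliander--Keel--Staffilani--Takaoka--Tao) then transfers this to a bound on $\|u\|_{L^{2(p-1)}_{t,x}(I_k\times\R^3)}$, provided the forcing
\[
|u|^{p-1}u-|u|^4 u=(|u|^\delta-1)|u|^4 u
\]
is small in a suitable dual Strichartz norm on $I_k$.

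The main obstacle, and the reason for the smallness requirement $\delta<\delta_0(M_0)$, is precisely estimating $(|u|^\delta-1)|u|^4 u$: the pointwise factor $|u|^\delta-1$ is small only where $|u|$ is bounded, which forces one to control $\|u\|_{L^\infty}$ through Gagliardo--Nirenberg estimates such as $\|u\|_{L^\infty}\lesssim \|u\|_{\dot H^1}^{1/2}\|u\|_{\dot H^2}^{1/2}$, hence to track the growing higher Sobolev norm of the very solution one is trying to bound. Closing this loop requires a logarithmic Gronwall argument propagating the $\dot H^1\cap\dot H^2$ bound from interval to interval while absorbing the perturbation error. Iterating over the $N$ Morawetz subintervals produces tower constants of the form $(CE)^{CE^{352}}$ appearing in the statement; the integer $352$ comes from a careful tracking of Strichartz exponents, of the Sobolev embeddings used to handle the factor $|u|^\delta-1$, and of the combinatorics of the Morawetz pigeonhole.

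Once \eqref{eq:estimateonspacetime} is established, \eqref{eq:estimateonSobolev} follows by applying Strichartz estimates at the $\dot H^2$ level on each subinterval (the nonlinearity being essentially tame at this regularity once the $L^{2(p-1)}_{t,x}$ norm is finite), and scattering as $t\to\pm\infty$ follows from the global finiteness of $\|u\|_{L^{2(p-1)}_{t,x}(\R^{3+1})}$ by the standard Duhamel construction of scattering states.
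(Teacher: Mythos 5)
Your plan has a genuine gap at its quantitative engine. You propose to derive the global spacetime bound \eqref{eq:estimateonspacetime} from the Morawetz inequality
$\int_0^T\!\int_{\R^3}\frac{|u|^{p+1}}{|x|}\,dx\,dt\lesssim E$
together with a pigeonhole in $t$, claiming this controls the number $N$ of subintervals in terms of $E$ alone. This is false in the absence of spherical symmetry: the weight $1/|x|$ degenerates away from the origin, and without a radial Sobolev inequality there is no way to convert the weighted bound into a bound on a scale-invariant norm such as $\|u\|_{L^{2(p-1)}_{t,x}}$ (or to identify ``good times''), because one has no a priori control on where the solution concentrates. The paper says this explicitly: ``In the spherically symmetric case, the classical Morawetz inequality gives an a priori spacetime bound as long as the solution exists. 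The following result replaces this long-time estimate in the absence of symmetry assumptions.'' The replacement is the entire machinery of Sections~\ref{sec:forward}--\ref{sec:rev-mass} and Lemma~\ref{lem:chain}, i.e.\ an adaptation to the supercritical setting of Bourgain/Tao's induction-on-energy scheme: a localized energy--flux inequality on light cones (Lemma~\ref{lem:potentialdecay}), an asymptotic-stability estimate for the free evolution, and a mass-concentration argument in $\dot H^{s_p}$ (Lemma~\ref{lem:massconc}) with a quantitative upper bound on the concentration radius. None of this is supplied or replaced by what you write, so the count $N$ and therefore \eqref{eq:estimateonspacetime} are not established by your argument.

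Beyond this, your route of comparing $u$ to the $p=5$ solution $v_k$ and treating $(|u|^\delta-1)|u|^4 u$ as a perturbation is not what the paper does, and it introduces its own circularity: to make the perturbation small in a dual Strichartz norm you need an $L^\infty$-bound on $u$ over the whole interval, which requires the $\dot H^1\cap\dot H^2$ bound you are trying to prove. The paper avoids the perturbation error altogether by proving all the ingredients directly for the supercritical equation; the quantities $M=\|u\|_{L^\infty}$ and $L=\|u\|_{L^\infty\dot H^{s_p}}$ enter the final bound only to the power $\delta$, and the continuity argument in the proof of Theorem~\ref{thm:main} (via Lemma~\ref{lem:localbd}) closes the loop precisely because this $\delta$-power dependence is tamed by choosing $\delta_0(M_0)$ small. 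Your ``logarithmic Gronwall'' step would have to reproduce this $\delta$-power structure, which is nontrivial and, as written, unsubstantiated. The part of your sketch that does align with the paper is the final reduction: once \eqref{eq:estimateonspacetime} is known, \eqref{eq:estimateonSobolev} and scattering follow from Strichartz estimates at $\dot H^2$ regularity and a standard Duhamel argument, which matches Lemma~\ref{lem:localbd} and the surrounding discussion.
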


%{\color{red} Comment on initial datum: $\dot H^1 \cap \dot H^{1+\eps} \times H^\eps$... Then there exists $\delta_0 = \delta_0(M_0, \eps)$ such that ...}

Global regularity and scattering for the energy-critical regime was established in \cite{Struwe2, Grillakis}. The classical results in the critical case were recently improved to obtain explicit double exponential bounds \cite{Tao2} and to allow a critical nonlinearity with an extra logarithmic factor $f(u)= u^5 \log(2+u^2)$ in the case of spherical symmetric data  \cite{Tao}. Exploiting the method introduced in \cite{Tao2}, \cite{Roy} could remove the assumption of spherical symmetry for slightly $ \log\log$-supercritical growth. In two-dimensions, global regularity has also been established for the slightly supercritical nonlinearity $f(u)=u e^{u^2}$ in \cite{Struwe3}. For the classical supercritical nonlinearity $f(u) = \lvert u \rvert^{p-1} u$ with $p>5$, global existence and scattering of solutions still holds for small data in scaling-invariant spaces, for instance in $\dot H^{s_p}\times \dot{H}^{s_p-1}$ where $$s_p:= 1 + \frac{\delta}{2(p-1)}$$ is the critical Sobolev exponent. For general large data however, the problem of global regularity and scattering is still open: Apart from  conditional regularity results in terms of the critical Sobolev regularity \cite{KenigMerle, KillipVisan}, global solutions have been built only from particular classes of initial data \cite{KriegerSchlag, BeceanuSoffer} or for a nonlinearity satisfying the null condition as in \cite{WangYu, MiaoPeiYu}. 

Our result should be seen in line with \cite{Tao2, Roy} pushing global regularity in a slightly supercritical regime. Although the nonlinearity considered in \cite{Tao2, Roy} has a logarithmically supercritical growth at infinity, it still comes, up to lower order terms, with the scaling associated to the critical case $p=5$. Correspondingly, both the scaling invariant quantities of the critical regime, as well as some logarithmically higher integrability, are controlled by the energy. Instead, we consider the supercritical nonlinearity \eqref{eq:nlw} and achieve global existence and scattering by paying the price of working on bounded sets of initial data, as previously done for other equations, such as SQG \cite{CotiVicol} and Navier-Stokes \cite{ColomboHaffter}. As in \cite{Roy, CotiVicol, ColomboHaffter}, the crucial ingredient of the proof of Theorem~\ref{thm:main} is a (quantitative) long-time estimate. In the spherically symmetric case, the classical Morawetz inequality gives an a priori spacetime bound as long as the solution exists. The following result replaces this long-time estimate in the absence of symmetry assumptions.

%\begin{theorem}[A priori spacetime bound]\label{thm:spacetime} There exists a universal constant $C \geq 1 $ such that the following holds. %Let $\delta \in [0,1)$ be such that 
%
%For any solution $(u, \partial_t u) \in L^\infty (J, (\dot H^1 \cap \dot H^2 \times H^1)(\mathbb{R}^3))$ of \eqref{eq:nlw} with $p=5+\delta$, $\delta \in (0,1)$, denoting $M:=\norm{u}_{L^\infty(\mathbb{R}^3 \times J)}, \,E:=E(u) $ and $L:=\norm{(u, \partial_t u)}_{L^\infty(J, (\dot{H}^{s_p} \times \dot{H}^{{s_p}-1})(\mathbb{R}^3))} $ and assuming
%\begin{equation}\label{eq:smallnessdelta}
%\left((C EM^\frac{\delta}{2} L)^{C (EM^\frac{\delta}{2}L)^{176}}\right)^\delta \leq 2 \, ,
%\end{equation}
%we have% the a priori spacetime bound 
%\begin{align}\label{eq:finalest}
%\norm{u}_{L^{2(p-1)}(\mathbb{R}^3 \times J)} \leq \begin{cases} 1 &\text{ if } \min \{ E M^\frac{\delta}{2}, L \} < c_0 \, , \\ (C E M^\frac{\delta}{2} L)^{C (E M^\frac{\delta}{2}L)^{176}}  &\text{ else} \, .\end{cases}
%\end{align}
%\end{theorem}
%VARIANTE 1
\begin{theorem}[A priori spacetime bound]\label{thm:spacetime} There exists universal constant $C \geq 1 $ such that  %Let $\delta \in (0,1)$ be such that 
for any solution $(u, \partial_t u) \in L^\infty (J, (\dot H^1 \cap \dot H^2 \times H^1)(\mathbb{R}^3))$ of \eqref{eq:nlw} with $p=5+\delta$, $\delta \in (0,1)$, denoting $M:=\norm{u}_{L^\infty(\mathbb{R}^3 \times J)}, \,E:=E(u) $ and $L:=\norm{(u, \partial_t u)}_{L^\infty(J, (\dot{H}^{s_p} \times \dot{H}^{{s_p}-1})(\mathbb{R}^3))} $ 
the following holds.

\begin{itemize}
\item if $\min \{ E M^\frac{\delta}{2}, L \} < c_0$, then $\norm{u}_{L^{2(p-1)}(\mathbb{R}^3 \times J)} \leq 1$
\item if $\min \{ E M^\frac{\delta}{2}, L \} \geq c_0$ and $(C EM^\frac{\delta}{2} L)^{C (EM^\frac{\delta}{2}L)^{176}} \leq 2^\frac{1}{\delta}$, then
\begin{align}\label{eq:finalest}
\norm{u}_{L^{2(p-1)}(\mathbb{R}^3 \times J)} \leq (C E M^\frac{\delta}{2} L)^{C (E M^\frac{\delta}{2}L)^{176}}\, .
\end{align}
\end{itemize}
\end{theorem}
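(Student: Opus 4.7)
The plan is to adapt the ``decay of potential energy'' induction introduced by Tao for the energy-critical wave equation in \cite{Tao2} to the slightly supercritical regime, with $M$ and $L$ playing the role of perturbative parameters that compensate the loss of critical scaling. First, I would dispose of the case $\min\{EM^{\delta/2},L\}<c_0$. When $L$ is small, the standard small-data scattering theory at the scaling-critical level $\dot H^{s_p}\times \dot H^{s_p-1}$ together with Strichartz closes a single bootstrap and yields $\|u\|_{L^{2(p-1)}}\leq 1$ globally. When instead $EM^{\delta/2}$ is small, the interpolation $\int |u|^{2(p-1)} \lesssim M^{p-3}\int |u|^{p+1}$, combined with the potential-energy part of $E$ and Strichartz, gives the same conclusion.

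In the remaining regime I would first derive the Morawetz-type inequality $\int_J\int_{\R^3} \frac{|u|^{p+1}}{|x|}\,dx\,dt \leq CE$ by testing \eqref{eq:nlw} against the multiplier $\partial_r u + u/|x|$; this argument uses only $p>3$ and no criticality. The heart of the proof is then the following partition scheme à la Tao: subdivide $J$ into $N$ consecutive subintervals $J_k$ on which $\|u\|_{L^{2(p-1)}(\R^3\times J_k)}=\eta$ is a small universal constant. On each $J_k$, Strichartz together with the perturbative splitting $|u|^{p-1}u = |u|^{4}u + (|u|^\delta-1)|u|^{4}u$, in which the correction is absorbed using the pointwise bound $|u|^\delta-1\lesssim \delta\log M$ and the critical Sobolev control $L$, gives a closed estimate and propagates regularity. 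The number $N$ is then bounded by translating the Morawetz integral into potential energy concentrated on a small light cone: on a shortest subinterval one finds a cone carrying potential energy of order $E/N$, and iteration of this localization together with Tao's ``decay at the tip of a light cone'' procedure converts the Morawetz bound $E$ into the double-exponential count $N \leq (CEM^{\delta/2}L)^{C(EM^{\delta/2}L)^{176}}$.

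The main obstacle will be running Tao's intrinsically scale-invariant induction without the benefit of exact scaling. Each rescaling performed inside the iteration inflates the nonlinearity by a small power of the scale, and these inflations must be absorbed by the slack encoded in $M^{\delta/2}$ and $L$ without accumulating uncontrollably. The precise hypothesis $(CEM^{\delta/2}L)^{C(EM^{\delta/2}L)^{176}}\leq 2^{1/\delta}$ is exactly what guarantees that factors of the form $M^{\delta\cdot(\text{number of iterations})}$ remain comparable to $1$, so that at no stage does the supercritical correction overwhelm the critical bookkeeping. The quantitative payoff---getting the exponent $176$ and the products $EM^{\delta/2}L$ to come out as advertised---amounts to a careful, but structurally routine, tracking of constants through the critical induction of \cite{Tao2} once this perturbative framework is in place.
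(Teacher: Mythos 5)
Your high-level plan (dispose of the small cases via Strichartz, then run Tao's partition-into-subintervals scheme and count subintervals) matches the paper's outline, but the two central mechanisms you propose diverge from the paper's proof in ways that would not close.

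First, the classical Morawetz inequality $\int_J\int_{\R^3}|u|^{p+1}/|x|\,dx\,dt\lesssim E$ is not the tool here — and cannot be. It is scale-invariant only at $p=5$; for $p>5$ it scales with a negative power of $\lambda$, so it gives no scale-invariant control. More to the point, even at $p=5$ Tao's non-radial scheme does not rest on this Morawetz bound. The paper instead uses the \emph{localized energy/flux equality} on truncated light cones, and the associated decay estimate (Lemma \ref{lem:potentialdecay}) obtained from the \emph{dilation} multiplier $t\partial_t u + x\cdot\nabla u + u$, not the Morawetz multiplier $\tfrac{x}{|x|}\cdot\nabla u + u/|x|$. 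Crucially, in the supercritical case this decay estimate degenerates: it carries an extra factor $b^{\delta/(p+1)}$ which blows up as the time endpoint grows, and most of the work of the paper (Proposition \ref{cor:spacetimedecay}, and the need to bound the endpoint $T_2$ of the interval under consideration) is devoted to taming exactly this loss. Your proposal has no mechanism for this and, if one tries to push a Morawetz-based argument, the same lack of scale invariance appears as an uncontrolled $T_2$-dependence.

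Second, the perturbative splitting $|u|^{p-1}u=|u|^4 u+(|u|^\delta-1)|u|^4 u$ with $|u|^\delta-1\lesssim\delta\log M$ is not used in the paper and would not deliver what is needed. The pointwise bound fails for $|u|<1$ (there $|u|^\delta-1\approx -\delta|\log|u||$ is not controlled by $\log M$), and more structurally, the nonlinearity in the present problem carries the \emph{wrong scaling}, unlike the $\log$-supercritical cases of Tao and Roy where the leading term is exactly $u^5$. The paper therefore keeps the supercritical nonlinearity intact and works with the scaling-critical norms $L^{2(p-1)}$, $\dot H^{s_p}$, and the combination $EM^{\delta/2}$, the $M^{\delta/2}$ factors arising from interpolation rather than from a perturbative expansion. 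Finally, you miss the paper's genuinely new ingredient, which is where $L$ really enters: because the localized energy decay deteriorates with time, the paper needs an explicit control on the \emph{length} of the unexceptional intervals. This is obtained by a mass concentration argument performed in $\dot H^{s_p}$ rather than $\dot H^1$ (Proposition \ref{prop:reverse-sob}, Lemma \ref{lem:massconc}), which yields both a lower and an upper bound on the concentration radius $r_0$, and hence on the interval lengths. Without this step, the iteration cannot be closed, and the hypothesis $(CEM^{\delta/2}L)^{C(EM^{\delta/2}L)^{176}}\le 2^{1/\delta}$ is precisely what guarantees $B_{exc}^{\delta/2}\le 2$ in that final bookkeeping — not a bound on accumulated rescaling losses as your last paragraph suggests.
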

%\begin{theorem}[A priori spacetime bound]\label{thm:spacetime} There exist universal constants $C, C', P \geq 1 $ such that the following holds. Let $E, M, L >0$ be given. Let $\delta \in (0,1)$ be such that 
%\begin{equation}\label{eq:smallnessdelta}
%\left((C' EM^\frac{\delta}{2} L)^{C' (EM^\frac{\delta}{2}L)^{P}}\right)^\delta \leq 2 \, .
%\end{equation}
%Then for any solution $(u, \partial_t u) \in L^\infty (J, (\dot H^1 \cap \dot H^2 \times H^1)(\mathbb{R}^3))$ of \eqref{eq:nlw} with $p=5+\delta$ satisfying $\norm{u}_{L^\infty(\mathbb{R}^3 \times J)} \leq M, \,E(u) \leq E$ and $\norm{(u, \partial_t u)}_{L^\infty(J, (\dot{H}^{s_p} \times \dot{H}^{{s_p}-1})(\mathbb{R}^3))} \leq L$, we have% the a priori spacetime bound 
%\begin{align}\label{eq:finalest}
%\norm{u}_{L^{2(p-1)}(\mathbb{R}^3 \times J)} \leq \begin{cases} 1 &\text{ if } \min \{ E(u) M^\frac{\delta}{2}, L(u) \} < c_0 \, , \\ (C E(u) M^\frac{\delta}{2} L(u))^{C (E(u) M^\frac{\delta}{2}L(u))^{P}}  &\text{ else} \, .\end{cases}
%\end{align}
%\end{theorem}

\begin{corollary}\label{cor:spacetime} There exists a universal constant $C \geq 1$ such that the following holds. Let $M_0 >0$ given. Then there exists $\delta_0=\delta_0(M_0)>0$ such that for any solution $(u, \partial_t u) \in L^\infty(J, (\dot{H}^1 \cap \dot{H}^2 \times H^1)(\mathbb{R}^3))$ of \eqref{eq:nlw} with $p=5+\delta$ for $\delta \in (0, \delta_0]$ and with $\norm{(u, \partial_t u)}_{L^\infty(J, (\dot{H}^1 \cap \dot{H}^2 \times H^1)(\mathbb{R}^3))} \leq M_0$, we have the a priori spacetime bound
\begin{equation}\label{eq:doublexpM0}
\norm{u}_{L^{2(p-1)}(\mathbb{R}^3 \times J)} \leq \max \left \{ 1, (C E(u) M_0^\frac{\delta}{2})^{C (E(u) M_0^\frac{\delta}{2})^{352}}\right \} \, .
\end{equation}
\end{corollary}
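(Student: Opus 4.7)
The plan is to apply Theorem~\ref{thm:spacetime} after bounding all its input quantities $M$, $E$, and $L$ by $M_0$, then to choose $\delta_0(M_0)$ small enough that the smallness condition in the second bullet of Theorem~\ref{thm:spacetime} is automatically satisfied, and finally to reorganize the resulting estimate into the form \eqref{eq:doublexpM0}. For the a priori bounds, Gagliardo--Nirenberg in $\R^3$ gives $\norm{u}_{L^\infty} \lesssim \norm{u}_{\dot H^1}^{1/2}\norm{u}_{\dot H^2}^{1/2}$, hence $M \lesssim M_0$. Since $s_p = 1 + \delta/(2(p-1)) \in (1,2)$, interpolation between $\dot H^1$ and $\dot H^2$ yields $\norm{u}_{\dot H^{s_p}} \leq \norm{u}_{\dot H^1}^{2-s_p}\norm{u}_{\dot H^2}^{s_p-1} \leq M_0$, and similarly $\norm{\partial_t u}_{\dot H^{s_p-1}} \leq M_0$ (interpolating between $L^2$ and $\dot H^1$), so $L \lesssim M_0$. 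For the energy, $\norm{u}_{L^{p+1}}^{p+1} \leq \norm{u}_{L^\infty}^{p-5}\norm{u}_{L^6}^6 \lesssim M_0^{p+1}$ via $\dot H^1 \hookrightarrow L^6$ and the $L^\infty$ bound, hence $E \lesssim M_0^{p+1}$. In particular, both $EM_0^{\delta/2}L$ and $EM^{\delta/2}L$ are bounded by a constant $K_0(M_0)$ depending only on $M_0$ (for $\delta \in (0,1)$).

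Next I apply Theorem~\ref{thm:spacetime}. If $\min\{EM^{\delta/2}, L\} < c_0$, the first bullet gives $\norm{u}_{L^{2(p-1)}(\R^3\times J)} \leq 1$, which is already compatible with \eqref{eq:doublexpM0}. Otherwise, the smallness condition $(CEM^{\delta/2}L)^{C(EM^{\delta/2}L)^{176}} \leq 2^{1/\delta}$ is equivalent to
\[
\delta \ \leq \ \frac{\log 2}{C\,K_0(M_0)^{176}\log\!\big(C\,K_0(M_0)\big)},
\]
which I enforce by defining $\delta_0(M_0)$ equal to the right-hand side (intersected with a fixed small constant). The second bullet then yields $\norm{u}_{L^{2(p-1)}(\R^3\times J)} \leq (CEM^{\delta/2}L)^{C(EM^{\delta/2}L)^{176}}$.

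It remains to rewrite this bound in the form~\eqref{eq:doublexpM0}, in which $L$ is absent and the exponent is $352$ instead of $176$. Using $M \leq CM_0$, $L \leq CM_0$, together with the lower bound $EM_0^{\delta/2} \gtrsim 1$ guaranteed in case (ii), one can absorb the $L$-factor into a second power of $EM_0^{\delta/2}$: schematically $EM^{\delta/2}L \lesssim (EM_0^{\delta/2})^2$, which doubles the exponent $176 \mapsto 2\cdot 176 = 352$ (up to enlarging the universal constant $C$). The main technical obstacle is precisely this absorption: the crude interpolation bound $L \leq CM_0$ is not directly comparable to $EM_0^{\delta/2}$ (which may be much smaller than $M_0$ when $M_0$ is large), so one must refine it to $L \lesssim E^{(1-\theta)/2}M_0^{\theta}$ with $\theta = s_p-1 \ll 1$ and split cases based on the size of $E$ in order to keep all constants independent of $M_0$.
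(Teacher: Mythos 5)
Your plan and intermediate estimates are essentially those in the paper: bound $M$, $L$, and $E$ through $M_0$ via Sobolev embedding and interpolation, choose $\delta_0(M_0)$ small enough for the second bullet of Theorem~\ref{thm:spacetime} to apply, and then recast the resulting bound in terms of $EM_0^{\delta/2}$ with the exponent doubled from $176$ to $352$. However, you explicitly flag the final recasting step as ``the main technical obstacle'' and do not resolve it, so there is a genuine gap precisely where the argument is most delicate.

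Concretely, your crude bound $L\lesssim M_0$ cannot be upgraded to $L\lesssim EM_0^{\delta/2}$ in general, as you correctly observe, and your proposed fix --- refine to $L\lesssim E^{(1-\theta)/2}M_0^{\theta}$ with $\theta=s_p-1$ and then ``split cases based on the size of $E$'' --- is the right first half but the wrong second half. No case split is needed. The paper's resolution is: interpolating $\dot H^1\cap\dot H^2$ (respectively $L^2\cap\dot H^1$) and using $\norm{\nabla u}_{L^2},\norm{\partial_t u}_{L^2}\lesssim E^{1/2}$ gives
$L\lesssim E^{(2-s_p)/2}M_0^{\,s_p-1}$. Combined with $M\lesssim M_0$, one gets
$EM^{\delta/2}L\lesssim E^{2-\frac{\delta}{4(p-1)}}M_0^{\,\delta/2+\frac{\delta}{2(p-1)}}$.
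Now two ingredients close the argument without any case analysis: (1) imposing $\delta\le \ln 2/\ln M_0$ (which you should include explicitly in the definition of $\delta_0$, even if it happens to be implied by your choice through $K_0(M_0)$) ensures $M_0^{\delta/2+\frac{\delta}{2(p-1)}}\lesssim 1$; and (2) the lower bound $EM_0^{\delta/2}\ge c_0/C_S$, which holds in case (ii) because $EM^{\delta/2}\ge c_0$ and $M\le C_S M_0$, lets one absorb the deficit in the $E$-power (from $2-\frac{\delta}{4(p-1)}$ up to $2$) into a universal constant. This yields $EM^{\delta/2}L\le (C'E)^2\le (C''EM_0^{\delta/2})^2$ for universal $C',C''$, and hence the $352$ exponent. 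You identified the correct refined interpolation, but the argument as written leaves the absorption to an unspecified case analysis; you need to supply the two observations above to close it.

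A minor further point: the paper defines $\delta_0$ via the sharper constraint $\bigl((\bar C E)^{2C(\bar C E)^{352}}\bigr)^\delta\le 2$ \emph{after} the absorption $EM^{\delta/2}L\lesssim E^2$, whereas you bound $EM^{\delta/2}L$ by the crude $K_0(M_0)\gtrsim M_0^{p+2}$ before checking the smallness hypothesis. Your route gives a more conservative (smaller) $\delta_0$ but is logically fine for the purpose of the corollary; just make sure the resulting $\delta_0$ is only used to verify the hypothesis of the second bullet and not silently re-used at a place where the sharper bound $EM^{\delta/2}L\lesssim E^2$ is needed.
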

%\begin{remark} From the proof, we observe that $\delta_0$ has the following growth as $M_0\to \infty$: There exist $C' , P'\geq 1$ universal such that
%\begin{equation}
%\delta_0(M_0)=\left( \log_2 \left( (C'M_0)^{C' M_0^{P'}} \right) \right)^{-1}  = \left( C' M_0^{P'} \log_2(C' M_0)\right)^{-1}\, .
%\end{equation}
%\end{remark}
%VARIANTE 1
\begin{remark} From the proof, we observe that $\delta_0$ has the following dependence as $M_0\to \infty$: There exists $C' \geq 1$ such that 
\begin{equation}
\delta_0:= \min \left \{1,\frac{\ln 2}{\ln M_0}, \frac{\ln 2}{\ln ( C' E) (C' E)^{352}} \right \} \,.
\end{equation}
\end{remark}
Theorem~\ref{thm:main} follows from Corollary~\ref{cor:spacetime} and a continuity argument, taking advantage of the fact that, if on one side the estimate \eqref{eq:doublexpM0} involves in the right-hand side higher order norms of the solution itself, which we a priori don't control for large times, on the other side they appear only to the power $\delta$ and hence can be kept under control for $\delta$ small. As regards the initial data, the statement of Theorem~\ref{thm:main} is written for simplicity with $(u_0, u_1) \in \dot H^1 \cap \dot H^2 \times H^1$ but a similar result would hold just above the critical threshold, namely for 
$(u_0, u_1) \in \dot H^1 \cap \dot H^{1+\eps} \times H^\eps$ for some $\eps >0$. Correspondingly, $\delta_0$ would also depend on $\eps$.

The proof of Theorem~\ref{thm:spacetime} follows instead the scheme introduced in \cite{Tao2} to obtain double exponential bounds on critical Strichartz norms based on Bourgain's ``induction on energy" method \cite{Bourgain}. In \cite{Roy}, the scheme has been successfully applied to a $\log$-supercritical equation assuming a (subcritical) a priori bound $M$ on $\norm{u}_{L^\infty(\R^3 \times J)}$: Indeed, it was noticed that the induction on the energy, which does not allow to include the a priori bound $M$, can actually be bypassed by a simpler ad-hoc argument. We will use the latter strategy also in our case. Rather than controlling a $L^4L^{12}$ norm as performed in the mentioned papers, we estimate $L^{2(p-1)}$ norm, which is scaling-critical for every $p$.
To follow their line of proof, we need to overcome some issues related to the supercritical nature of our equation:  For instance, a fundamental use of the equation in all critical global regularity results is the localized energy equality and the subsequent potential energy decay, first used in \cite{Struwe2, Grillakis, ShatahStruwe1}.  In the supercritical regime, the localized energy inequality becomes less powerful, since the nonlinear term is estimated this time in terms of a power of the length of the time interval besides the energy itself (see Lemma \ref{lem:potentialdecay}). To be able to still take advantage of this localized energy inequality, we need a control on the length of the so called unexceptional intervals which was not derived before in \cite{Tao2,Roy} and seems to work in the supercritical case only.
To achieve this control, we introduce another scaling invariant norm of $u$ accounting for more differentiability, namely $L^\infty \dot H^{s_p}$. This quantity, which appear in the final estimate \eqref{eq:finalest}, was not needed in \cite{Tao2,Roy}. It turns out fundamental to bound the length of unexceptional intervals by performing a mass concentration in $\dot H^{s_p}$, rather than in $\dot H^1$ (see Lemma \ref{lem:massconc}) and thereby obtaining an upper bound on the mass concentration radius.

The strategy of proof of Theorem \ref{thm:main} is very flexible and we plan to apply it in a future work to the radial supercritical Schr\"odinger equation.

\section{Preliminaries}\label{sec:prelim}
%\subsection{Critical Regularity} Fix $p\geq 5$. Under the scaling \eqref{eq:scaling}, we have
%$%\begin{equation}
%\norm{u_\lambda(\cdot, t)}_{\dot{H}^s(\R^3)}= \lambda^{s-\left( \frac{3}{2}- \frac{2}{p-1}\right)} \norm{u(\cdot, \lambda t)}_{\dot{H}^s(\R^3)}
%$%\end{equation}
%, such that the scaling critical regularity for \eqref{eq:nlw} is $\dot H^{s_p}(\R^3)$ with
%\begin{equation}\label{eq:sp}
%s_p:= \frac{3}{2}- \frac{2}{p-1} = 1+  \frac{\delta}{2(p-1)} \in [1, \frac{3}{2}) \, .
%\end{equation}
\subsection{Energy-flux equality}
With the notation of \cite{ShatahStruwe}, we introduce the forward-in-time wave cone, the truncated cone and their bounderies centered at $z_0=(x_0, t_0) \in \mathbb{R}^3 \times \mathbb{R}$ defined by
\begin{align}
K(z_0) &:= \{ z=(x,t) \in \mathbb{R}^4: \lvert x-x_0 \rvert \leq t-t_0 \} \, ,\\
K_s^t (z_0)&:= K(z_0) \cap (\mathbb{R}^3 \times [s,t]) \,, \\
M_s^t(z_0)&:= \{z=(x,r) \in \mathbb{R}^3 \times (s,t): \lvert x-x_0 \rvert = r- t_0 \} \,, \\
D(t; z_0)&:= K(z_0) \cap (\mathbb{R}^3 \times {t})\, .
\end{align}
 Correspondingly, we introduce the localized energy as well as the energy flux
\begin{align}
E(u; D(t; z_0)) &:=  \int_{D(t; z_0)} \frac{1}{2} \lvert \partial_t u \rvert^2 + \frac{1}{2} \lvert \nabla u \rvert^2 + \frac{\lvert u \rvert^{p+1}}{p+1}  \, \mathrm{d}x \, . \\
Flux(u, M_s^t(z_0)) &:= \int_{M_s^t(z_0)} \frac{1}{2} \left \lvert \nabla u - \frac{x-x_0}{\lvert x-x_0 \rvert} \partial_t u \right \rvert^2 + \frac{\lvert u \rvert^{p+1}}{p+1} \frac{\mathrm{d}\sigma}{\sqrt{2}} \, .
\end{align}
Let us recall, that for any sufficiently regular solution we have the energy-flux identity
\begin{equation}\label{eq:localizedenergy1}
E(u; D(t; z_0)) + Flux(u; M_s^t(z_0))= E(u; D(s; z_0)) \,
\end{equation}
for any $0<s<t\,.$Indeed, \eqref{eq:localizedenergy1} is obtained by integration of $(\Box u - \lvert u \rvert^{p-1} u ) \partial_t u $ on $K_s^t(z_0)$, see for instance \cite{ShatahStruwe}. Whenever $z_0=(0,0)$, we will not write the dependence on $z_0$, we will write $\Gamma_+(I)$ for the forward wave cone centered in $0$ and truncated by $I$
$$\Gamma_+(I):= \{(x,t) \in \R^3 \times \R: |x|<t, \, t\in I\},$$ and we denote $e(t):=E(u; D(t)) \, $. We can then rewrite \eqref{eq:localizedenergy1} for any $0<s<t$
\begin{equation}
e(t)-e(s)= \int_{M_s^t} \frac{1}{2} \lvert \nabla u - \frac{x}{t} \partial_t u \rvert^2 + \frac{\lvert u \rvert^{p+1}}{p+1} \, \frac{\mathrm{d}\sigma}{2} \, .
\end{equation}

%\subsection{Strichartz}  Let $u: \mathbb{R}^3 \times I \to \mathbb{R}$ solve a linear wave equation with right-hand side $F$, that is$\Box u = F$. Then for any $(q,r)$ wave-1-admissible, i.e. $(q,r)\in (2, \infty] \times [1, \infty]$ such that 
%\begin{equation}\label{eq:waveadmissible}
%\frac{1}{q}+\frac{3}{r}=\frac{1}{2}
%\end{equation}
%we have 
%\begin{equation}\label{eq:strichartz}
%\norm{u}_{L^q(I, L^r)} + \norm{ \nabla_{t,x} u}_{L^\infty(I, L^2)} \leq C\left( \norm{\nabla_{t,x} u(t_0)}_{L_x^2} +\norm{F}_{L^1(I, L^2)} \right) \, ,
%\end{equation}
%where $t_0 \in I$ is a generic time. We notice that 
%\begin{itemize}
%\item unlike in the spherically symmetric case, the end-point $(q,r)=(2, \infty)$ is not allowed in the Strichartz estimates, 
%\item the pair $(q,r)=(4, 12)$ is wave-1-admissible in dimension $3$, 
%\item all wave-1-admissible Strichartz give are scaling invariant space-time norms for $p=5$,
%\item the interpolation of Strichartz pairs gives a Strichartz pair.
%\end{itemize}

\subsection{Strichartz estimates} Let $u: \mathbb{R}^3 \times I \to \mathbb{R}$ solve the linear wave equation $\Box u = F$. Let $m\in[ 1,3/2)$. Then for any $(q,r)\in (2, \infty] \times [1, \infty)$ wave-$m$-admissible and  for any conjugate pair $(\tilde{q}, \tilde{r}) \in [1, +\infty]\times [1, + \infty]$ with
\begin{equation}\label{eq:mwaveadmissible}
\frac{1}{\tilde{q}} + \frac{3}{\tilde{r}}-2=\frac{1}{q}+\frac{3}{r}=\frac{3}{2}-m
\end{equation}
%\begin{equation}\label{eq:mwaveadmissiblecon}
%\frac{1}{\tilde{q}} + \frac{3}{\tilde{r}}= 2+ \frac{1}{q}+\frac{3}{r} = \frac{7}{2}-m
%\end{equation}
we have
\begin{equation}\label{eq:mstrichartz}
\norm{u}_{L^q(I, L^r)} + \norm{(u, \partial_t u)}_{L^\infty(I, \dot{H}^m \times \dot{H}^{m-1})} \leq C\left( \norm{(u,  \partial_t u)(t_0)}_{\dot{H}^m_x \times \dot{H}^{m-1}_x} +\norm{F}_{L^{\tilde{q}}(I, L^{\tilde{r}})} \right) \, ,
\end{equation}
where $t_0 \in I$ is a generic time. Notice that $(q,r)=(2(p-1), 2(p-1))$ is wave-$s_p$- admissible and all $(q,r)$ wave-$s_p$-admissible are scaling-critical.
Moreover, the constant $C$ can be taken independent on $m \in [1,5/4]$.

\subsection{Localized Strichartz estimates} By the finite speed of propagation, we can localize the above Strichartz estimates on wave cones. Let $I=[a,b]$ and $m \in [ 1, \frac 32)$. For any solution $u: \mathbb{R}^3 \times I \rightarrow \mathbb{R}$ of a linear wave equation $\Box u = F$, we have for any $(q, r)$ wave-$m$-admissible and any conjugate pair $(\tilde{q}, \tilde{r})$ satisfying \eqref{eq:mwaveadmissible} the localized estimate
\begin{align}\label{eq:mstrichartzlocalized}
\norm{u}_{L^q L^r (\Gamma_+(I))} &\lesssim \norm{(u, \partial_t u)(b)}_{(\dot{H}^m \times \dot{H}^{m-1})(\mathbb{R}^3)}+ \norm{F}_{L^{\tilde{q}} L^{\tilde{r}}(\Gamma_+(I))} \, .
\end{align}
As a consequence, if $I= [a,b]=J_1 \cup J_2$, we have
\begin{align}
\label{eq:mstrichartzlocalized2}
\norm{u}_{L^q L^r (\Gamma_+(J_1))} &\lesssim \norm{(u, \partial_t u)(b)}_{(\dot{H}^m \times \dot{H}^{m-1})(\mathbb{R}^3)}+ \norm{F}_{L^{\tilde{q}} L^{\tilde{r}}(\Gamma_+(J_1 \cup J_2))} \, .
\end{align}
%\begin{proof}[Proof of \eqref{eq:mstrichartzlocalized}]
%Define $\tilde{u}: \mathbb{R}^3 \times I \rightarrow \mathbb{R}$ to be the solution of the linear wave equation 
%\begin{align}
%\begin{cases} \Box \tilde u = F \chi_{\Gamma_+(I)} \\ (\tilde u, \partial_t \tilde u)(b)= (u, \partial_t u)(b) \, .
%\end{cases}
%\end{align}
%Set $w:= u-\tilde{u}$ and observe that $w$ solves $\Box w = F \chi_{\Gamma_+(I)^c}$ with initial data $(w, \partial_t w)(b)= (0,0)$. By finite speed of propagation, we deduce that $w \equiv 0$ on $ \interior ( \Gamma_+(I))\,.$ Hence by applying \eqref{eq:mstrichartz} to $\tilde{u}$, we obtain
%\begin{align}
%\norm{u}_{L^q L^r (\Gamma_+(I))} = \norm{\tilde{u}}_{L^q L^r (\Gamma_+(I))} &\lesssim   \norm{(\tilde u, \partial_t \tilde u)(b)}_{(\dot{H}^m \times \dot{H}^{m-1})(\mathbb{R}^3)}+ \norm{F \chi_{\Gamma_+(I)}}_{L^{\tilde{q}} L^{\tilde{r}}(\mathbb{R}^3 \times I)} \\
%&\lesssim   \norm{(\tilde u, \partial_t \tilde u)(b)}_{(\dot{H}^m \times \dot{H}^{m-1})(\mathbb{R}^3)}+ \norm{F}_{L^{\tilde{q}} L^{\tilde{r}}(\Gamma_+(I))} \, .
%\end{align}
%\end{proof}

\subsection{Littlewood-Paley projection}
We follow the presentation of \cite{Tao3}. Fix $\phi \in C^\infty_c(\mathbb{R}^d)$ radially symmetric, $0\leq \phi \leq 1$ such that $\supp \phi \subseteq B_2(0)$ and $\phi \equiv 1$ on $B_1(0)$. For $N \in 2^\mathbb{Z}$, introduce the Fourier multipliers
\begin{align}
\widehat{P_{\leq N } f}(\xi)&:= \phi(\xi/N) \hat{f}(\xi) \,, \\
\widehat{P_{> N} f}(\xi)&:= (1-\phi(\xi/N))\hat{f}(\xi) \,, \\
\widehat{P_N f}(\xi)&:= (\phi(\xi/N) - \phi(2\xi/N) ) \hat{f}(\xi) \, .
\end{align}
The above projections can equivalently be written as convolution operators and Young inequality shows that the Littlewood-Paley projections are bounded on $L^p$ for any $1\leq p \leq + \infty\, .$ Moreover, we have the Bernstein's inequalities
\begin{align}\label{eq:Bernstein}
\norm{P_{\leq N} f}_{L^{q}_x(\mathbb{R}^d)}  &\lesssim_{p,q} N^{d(\frac 1p- \frac 1q)} \norm{P_{\leq N} f}_{L^p_x (\mathbb{R}^d)} %\\ \label{eq:Bernstein2}
%\norm{P_ N f}_{L^{q}_x(\mathbb{R}^d)}  &\lesssim_{p,q} N^{d(\frac 1p- \frac 1q)} \norm{ P_N f}_{L^p_x (\mathbb{R}^d)} \,. 
\end{align}
for $1 \leq p \leq q \leq + \infty$ and the same holds with $P_ N f$ in place of $P_{\leq N} f$.
%See \cite[(A.5) p. 333]{Tao3}. 
Moreover, for $1< p< + \infty$ we also recall the fundamental Paley-Littlewood inequality
\begin{equation}\label{eq:PaleyLittlewoodineq}
\norm{f}_{L^p(\mathbb{R}^d)} \sim  \norm{( \sum_{N \in 2^\mathbb{Z}} \lvert P_N f \rvert^2 )^\frac{1}{2}}_{L^p(\mathbb{R}^d)}.
\end{equation} %By Plancherel's theorem, we deduce that for $-\frac{d}{2}< s < \frac{d}{2}$
%\begin{equation}\label{eq:PaleyLittlewoodHs}
%\norm{f}_{\dot H^s(\mathbb{R}^d)} \sim \left( \sum_{N \in 2^\mathbb{Z}} N^{2s} \norm{P_N f}_{L^2(\mathbb{R}^d)}^2 \right)^\frac{1}{2} \, .
%\end{equation}

%\item{Local mass is stable (Tao, 2006): } Let $r>0$, $x \in \mathbb{R}^3$ and consider a solution $u$ of the energy-critical NLW on $I \times \mathbb{R}^3$. Then for $t, t' \in I$ 
%\begin{equation}\label{eq:stabilityofmass}
%\left( \int_{B(x,r)} \lvert u(y,t') \rvert^2 \, \mathrm{d}y \right)^\frac{1}{2}  = \left( \int_{B(x,r)} \lvert u(y,t) \rvert^2 \, \mathrm{d}y \right)^\frac{1}{2}  + O(E^\frac{1}{2} \lvert t'-t \rvert^\frac{1}{2}) \, .
%\end{equation}
%The proof does not use the structure of the nonlinearity but only the bound 
%$$\norm{\partial_t u(t) }_{L^2} \leq E^\frac{1}{2}$$ for $t\in I$, i.e. the coercivity of the energy. Hence is valid for \eqref{eq:nlw} and \eqref{eq:loglognlw} in general.
%\begin{proof}[Proof of Lemma 4.1 in \cite{Tao}] Let $t' > t$.
%\begin{align}
%\left( \int_{B(x,r)} \lvert u(y,t') \rvert^2 \, \mathrm{d}y \right)^\frac{1}{2} &\leq \left( \int_{B(x,r)} \lvert u(y,t) \rvert^2 \, \mathrm{d}y \right)^\frac{1}{2} + \int_t^{t'} \left( \int_{B(x,r)} \lvert \partial_t u(y,s) \rvert^2 \, \mathrm{d}y \right)^\frac{1}{2} \, \mathrm{d}s \\
%&\leq \left( \int_{B(x,r)} \lvert u(y,t) \rvert^2 \, \mathrm{d}y \right)^\frac{1}{2} + \lvert t'-t\rvert^\frac{1}{2} E^\frac{1}{2}
%\end{align}
%\end{proof}

\subsection{Dependence of constants}In the rest of the paper, all constants will be independent on the choice of $\delta \in [0,1)$. We keep the estimates in scaling invariant form (for instance, in all the statements of the Lemmas in Sections~\ref{sec:small}-~\ref{sec:rev-mass}). We write the terms in the estimate in terms of simpler scaling invariant quantities, such as $E \|u \|_{L^{\infty}}^{\delta/2}$, $\|u \|_{L^{2(p-1)}}$,  $\|u \|_{L^\infty \dot H^{s_p}}$, $ET^{-\frac{\delta}{p-1}}$ (see for instance \eqref{eq:lowerboundpotential}).

\section{Spacetime norm bound under a scaling invariant smallness assumption %(Roy's Lemma 7)
}\label{sec:small}
We recall that the nonlinear wave equation has bounded $L^{2(p-1)}$ norm if we assume a suitable smallness on the solution, which must be in terms of scaling invariant quantities. We will need it in terms of the critical $\dot H^{s_p}$ norm as well as a combination of the energy and the $L^\infty$ norm.
\begin{lemma}\label{lem:smallenergy} Let $p=5+\delta$ for $\delta \in (0,1)$ and consider a solution $(u, \partial_t u) \in L^\infty (I, \dot H^1 \cap \dot H^2 \times  H^1)$ to \eqref{eq:nlw}. Assume additionally that $\norm{u}_{L^\infty(\mathbb{R}^3\times I)} \leq M$. There exists a universal $0<c_0<1$ such that if 
\begin{equation}
EM^\frac{\delta}{2}  \leq c_0 \text{ or } \norm{(u, \partial_t u)}_{L^\infty (I, (\dot H^{s_p} \times \dot H^{s_p-1})(\mathbb{R}^3)) }\leq c_0 \,,
\end{equation} 
then 
\begin{equation}\label{eqn:small-en-ts}
\norm{u}_{L^{2(p-1)}(\mathbb{R}^3 \times I)}\leq 1 \, .
\end{equation}
\end{lemma}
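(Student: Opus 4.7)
The plan is to do a small-data Strichartz bootstrap at the scaling-critical $L^{2(p-1)}_{t,x}$ level, using a different wave-admissibility level for each of the two alternative hypotheses.

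Under the second alternative $\|(u, \partial_t u)\|_{L^\infty(I, \dot H^{s_p} \times \dot H^{s_p-1})} \leq c_0$, I would apply the Strichartz estimate \eqref{eq:mstrichartz} at level $m=s_p$ with the wave-$s_p$-admissible pair $(q,r)=(2(p-1), 2(p-1))$ (noted after \eqref{eq:mstrichartz}) and the dual pair $(\tilde q, \tilde r)=(2(p-1)/p, 2(p-1)/p)$, a short computation showing that $\tfrac{2p}{p-1} = \tfrac{5}{2} - \tfrac{\delta}{2(p-1)}$ so that \eqref{eq:mwaveadmissible} is satisfied. Since $\bigl\||u|^{p-1}u\bigr\|_{L^{2(p-1)/p}_{t,x}} = \|u\|_{L^{2(p-1)}_{t,x}}^p$, this yields
\[
\|u\|_{L^{2(p-1)}(I \times \R^3)} \leq C\bigl(c_0 + \|u\|_{L^{2(p-1)}(I \times \R^3)}^p\bigr).
\]
A standard continuity argument on $f(t):=\|u\|_{L^{2(p-1)}((I\cap (-\infty,t]) \times \R^3)}$—finite and continuous in $t$ thanks to the a priori regularity $(u, \partial_t u) \in L^\infty_t(\dot H^1 \cap \dot H^2 \times H^1)$—closes the bootstrap and gives \eqref{eqn:small-en-ts} once $c_0$ is small enough.

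Under the first alternative $EM^{\delta/2} \leq c_0$, the critical $\dot H^{s_p}$ norm is not directly controlled, so I would instead work with the wave-$1$-admissible self-dual pair $(q,r)=(8,8)$, whose conjugate $(\tilde q, \tilde r)=(8/5,8/5)$ satisfies \eqref{eq:mwaveadmissible} for $m=1$. Factoring $|u|^{p-1}u = |u|^\delta \cdot |u|^4 u$ and absorbing $|u|^\delta \leq M^\delta$ gives $\bigl\||u|^{p-1}u\bigr\|_{L^{8/5}_{t,x}} \leq M^\delta \|u\|_{L^8_{t,x}}^5$, so from Strichartz together with $\|(u,\partial_t u)(t_0)\|_{\dot H^1\times L^2}^2 \leq 2E$ we obtain
\[
\|u\|_{L^8(I \times \R^3)} \leq C\bigl(E^{1/2} + M^\delta \|u\|_{L^8(I \times \R^3)}^5\bigr).
\]
Setting $Y:=M^{\delta/4}\|u\|_{L^8(I \times \R^3)}$ makes this scaling-invariant, $Y \leq C((EM^{\delta/2})^{1/2} + Y^5)$, and the same continuity bootstrap gives $\|u\|_{L^8(I \times \R^3)} \leq 2Cc_0^{1/2} M^{-\delta/4}$. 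Hölder interpolation with $L^\infty$ then converts this to the target critical norm,
\[
\|u\|_{L^{2(p-1)}_{t,x}} \leq \|u\|_{L^8_{t,x}}^{\frac{4}{p-1}} \|u\|_{L^\infty_{t,x}}^{\frac{\delta}{p-1}} \leq (2Cc_0^{1/2})^{\frac{4}{p-1}},
\]
the powers of $M$ cancelling exactly (as forced by scaling); the right-hand side is $\leq 1$ for $c_0$ small uniformly in $\delta \in (0,1)$.

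I expect the (mild) subtleties to lie in three places: choosing in each case the right Strichartz pair so that the nonlinear power closes the bootstrap; finding the factoring $|u|^{p-1}u = |u|^\delta \cdot |u|^4 u$ in the energy case, which is exactly what trades the supercritical gap for the scaling-invariant factor $M^{\delta/2}$; and verifying a priori finiteness of $\|u\|_{L^8_{t,x}}$ and $\|u\|_{L^{2(p-1)}_{t,x}}$ on compact subintervals so that the continuity argument can be started, which is automatic from the assumed $\dot H^1 \cap \dot H^2$ regularity via persistence for the linear wave equation and Sobolev embedding.
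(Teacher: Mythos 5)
Your argument is correct and follows the same overall strategy as the paper's: a scaling-critical small-data Strichartz bootstrap, with a separate closing argument for each of the two alternative smallness hypotheses. The difference is in the treatment of the nonlinear forcing. You place $|u|^{p-1}u$ in the self-dual Lebesgue spaces $L^{2(p-1)/p}_{t,x}$ (resp.\ $L^{8/5}_{t,x}$), so the forcing norm is directly a power of the bootstrap quantity; the paper instead uses the mixed norms $L^2_t L^{6(p-1)/(3p+1)}_x$ (resp.\ $L^2_t L^{3/2}_x$) together with the two-factor H\"older split $\||u|^{p-1}u\|_{L^2_t L^r_x} \lesssim \||u|^{p-1}\|_{L^2_{t,x}}\,\|u\|_{L^\infty_t L^{r'}_x}$ followed by the Sobolev embeddings $\dot H^{s_p} \hookrightarrow L^{3(p-1)/2}$ and $\dot H^1 \hookrightarrow L^6$. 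Both choices close the bootstrap; the paper's version yields a slightly lower nonlinear degree ($p-1$ and $4$ versus your $p$ and $5$), but the estimates are equivalent in strength. In the energy case, the paper interpolates first, $\|u\|_{L^{2(p-1)}}\leq \|u\|_{L^\infty}^{\delta/(p-1)}\|u\|_{L^8}^{4/(p-1)}$, and then bootstraps directly in $\|u\|_{L^{2(p-1)}}$, arriving at $\|u\|_{L^{2(p-1)}} \leq C(M^{\delta/2}E)^{2/(p-1)}(1+\|u\|_{L^{2(p-1)}}^4)$; you instead close a bootstrap in the manifestly scaling-invariant quantity $Y := M^{\delta/4}\|u\|_{L^8_{t,x}}$ using the factoring $|u|^{p-1}u = |u|^\delta\,|u|^4 u$ and interpolate to $L^{2(p-1)}$ only at the end. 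These are equivalent, but your route makes the scaling invariance --- and hence the uniformity of $c_0$ in $\delta\in(0,1)$ --- more transparent, a point the paper handles a bit more implicitly by fixing $c_0=(4C)^{-(p-1)/2}$ and relying on the constants being uniform in $m\in[1,5/4]$.
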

%\begin{remark} The smallness requirements are invariant under the natural scaling of \eqref{eq:nlw}.
%\end{remark}
\begin{proof} Let us first assume that $EM^\frac{\delta}{2} \leq c_0$ for a $c_0<1$ yet to be chosen. By interpolation 
\begin{equation}
\norm{u}_{L^{2(p-1)}} \leq \norm{u}_{L^\infty}^\frac{\delta}{p-1} \norm{u}_{L^8}^\frac{4}{p-1}  \, .
\end{equation}
We notice that $(8,8)$ is wave-1-admissible. By Strichartz \eqref{eq:mstrichartz} (with $m=1$ and $(\tilde{q}, \tilde{r})=(2, \frac 3 2)$), H\"older and the Sobolev embedding $\dot{H}^1(\R^3) \hookrightarrow L^6(\R^3)$ we have
\begin{equation} 
\norm{u}_{L^8_{t,x}} \lesssim E^\frac{1}{2} + \norm{\vert u \rvert^{p-1} u}_{L^2 L^{3/2}}  \lesssim E^\frac{1}{2} + \norm{\lvert u \rvert^{p-1}}_{L^{2}_{t,x}} \norm{u}_{L^\infty L^6} \lesssim E^\frac{1}{2} \left(1 + \norm{u}_{L^{2(p-1)}}^{p-1}\right) \, .
\end{equation}
Summarizing, we have obtained that for a $C\geq1$
\begin{equation}
\norm{u}_{L^{2(p-1)}} \leq C (M^\frac{\delta}{2} E)^\frac{2}{p-1} (1 + \norm{u}_{L^{2(p-1)}}^4) \, ,
\end{equation}
from which \eqref{eqn:small-en-ts} follows setting $c_0:=(4C)^{-\frac{p-1}{2}} <1%(2C)^{-2(p-1)} <1
$% and $C_0:=1% c_0^{-\frac{1}{2(p-1)}}
. 

Let us now assume that $\norm{(u, \partial_t u)}_{L^\infty (\dot H^{s_p} \times \dot H^{s_p-1})} \leq c_0'$ for a  $0<c_0'<1$. Observing that $(2(p-1), 2(p-1))$ is wave-$s_p$-admissible, we have by Strichartz \eqref{eq:mstrichartz} (with $m=s_p$ and {$(\tilde{q}, \tilde{r})=\left(2, \frac{6(p-1)}{3p+1}\right)$}), H\"older and the Sobolev embedding $\dot{H}^{s_p}(\mathbb{R}^3) \hookrightarrow L^\frac{3(p-1)}{2}(\mathbb{R}^3)$ %that 
\begin{align}
\norm{u}_{L^{2(p-1)}} &\lesssim \norm{(u, \partial_t u)}_{L^\infty (\dot{H}^{s_p} \times \dot H^{s_p-1})} + \norm{\lvert u \rvert^{p-1} u}_{L^2 L^{6(p-1)/(3p+1)}} \\
&  \lesssim \norm{(u, \partial_t u)}_{L^\infty( \dot{H}^{s_p} \times \dot H^{s_p-1})} + \norm{\lvert u \rvert^{p-1}}_{L^2_{t,x}} \norm{u}_{L^\infty L^{3(p-1)/2}} \\
&  \lesssim \norm{(u, \partial_t u)}_{L^\infty (\dot{H}^{s_p} \times \dot H^{s_p-1})}(1+\norm{ u}_{L^{2(p-1)}}^{p-1} ) \, . 
\end{align}
Calling $C'$ the constant in the above inequality, \eqref{eqn:small-en-ts} follows by setting $c_0':=(4C')^{-1}\,.$
\end{proof}

\section{Spacetime norm decay in forward wave cones}\label{sec:forward}

The goal of this section is to prove the following proposition, which individuates a subinterval $J$ (of quantified length) with small $L^{2(p-1)}$ norm of $u$ in any sufficiently large given interval $I=[T_1, T_2]$. The main difference to the energy-critical case $p=5$ \cite[Corollary 4.11]{Tao2} lies in the fact that the largeness requirement on $I$ can no longer be reached by simply choosing $T_2$ big enough (see Remark \ref{rem:T2}).

\begin{prop}[Spacetime-norm decay]\label{cor:spacetimedecay} Let $p=5+\delta$ with $\delta \in (0,1)$, $I=[T_1,T_2] \subset (0,\infty)$ and consider a solution $(u, \partial_t u) \in L^\infty (I, \dot H^1 \cap \dot H^2 \times  H^1)$ to \eqref{eq:nlw}. Assume that $\norm{u}_{L^\infty(\mathbb{R}^3 \times I)} \leq M$. There exists a universal constant $0<C_2<1$ such that if $0<\eta< 1$ is such that 
\begin{align}\label{eq:spacetimedecayhyp1}
\eta < C_2  (EM^\frac{\delta}{2})^\frac{7}{6(p-1)}
\end{align}
then the following holds for any $A$ satisfying
\begin{equation}\label{eq:spacetimedecayhypA}
A > (C_2 \eta^{-1})^\frac{12(p-1)}{5}(EM^\frac{\delta}{2})^\frac{14}{5} \, :
\end{equation}If $T_1$ and $T_2$ are such that 
\begin{equation}\label{eq:spacetimedecayhypI}
\frac{T_2}{T_1} \geq A^{3 (C_2 \eta^{-1})^\frac{6(p-1)(p+1)}{5} (EM^\frac{\delta}{2})^\frac{9p+19}{10}\max\{(C_2 \eta^{-1})^\frac{-6(p-1)^2}{5}  (EM^\frac{\delta}{2})^\frac{9(p-1)}{10} ,(M^\frac{p-1}{2} T_2)^\frac{\delta}{2} \} }  \, ,
\end{equation}
 then there exists a subinterval $J=[t', At'] \subseteq I$ with
\begin{equation}\label{eqn:spacetimethesis}
\norm{u}_{L^{2(p-1)}(\Gamma_+(J))} \leq \eta \, .
\end{equation}
\end{prop}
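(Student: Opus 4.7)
The plan is a pigeonhole argument over dyadic subintervals of $[T_1, T_2]$. Set $K := \lfloor \log_A(T_2/T_1) \rfloor$ and consider the dyadic partition $\{J_k\}_{k=0}^{K-1}$ with $J_k := [A^k T_1, A^{k+1}T_1]$. If \eqref{eqn:spacetimethesis} were to fail on every $J_k$, I want to derive an explicit upper bound on $K$ that contradicts the lower bound on $K = \log_A(T_2/T_1)$ imposed by hypothesis \eqref{eq:spacetimedecayhypI}, producing the desired good subinterval.

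The key estimate is a per-subcone dichotomy: on each $\Gamma_+(J_k)$, either $\norm{u}_{L^{2(p-1)}(\Gamma_+(J_k))} \leq \eta$ (done) or a suitable scaling-invariant additive quantity $\mathcal{Q}_k$ is bounded below by $c\, \eta^\alpha (EM^{\delta/2})^{-\beta}$ for explicit positive exponents $\alpha, \beta$. A natural candidate for $\mathcal{Q}_k$ that sums additively is the energy drop $e(A^kT_1) - e(A^{k+1}T_1)$, since by the energy-flux identity \eqref{eq:localizedenergy1} these telescope to $e(T_1) - e(T_2) \leq E$. I would derive the lower bound by applying the localized Strichartz inequality \eqref{eq:mstrichartzlocalized} on $\Gamma_+(J_k)$ with source $\lvert u \rvert^{p-1} u$, estimating the source via H\"older and the Sobolev embedding $\dot H^{s_p} \hookrightarrow L^{3(p-1)/2}$ as in the proof of Lemma \ref{lem:smallenergy}, and combining with a Morawetz-type identity on the truncated cone relating the interior spacetime integral of $\lvert u \rvert^{p+1}$ (weighted appropriately) to the lateral energy flux. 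Hypothesis \eqref{eq:spacetimedecayhypA} on $A$ is exactly what is needed for the Strichartz bootstrap to close, i.e.\ for the self-reinforcing factor $1 + \norm{u}_{L^{2(p-1)}(\Gamma_+(J_k))}^{p-1}$ to be absorbable.

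Summing the per-subcone lower bounds yields $K \cdot c\, \eta^\alpha (EM^{\delta/2})^{-\beta} \leq U$, where $U$ is the global upper bound on $\sum_k \mathcal{Q}_k$. In the critical case $p=5$ one has $U \lesssim E$; in our supercritical regime, the nonlinear excess $\lvert u \rvert^{p-1} - \lvert u \rvert^4 \leq M^\delta$ contributes a multiplicative factor of order $(M^{(p-1)/2}T_2)^{\delta/2}$, depending on the length of the time window. This dichotomy is the source of the $\max\{\cdot, \cdot\}$ inside the exponent of $A$ in \eqref{eq:spacetimedecayhypI}: one argument reflects the clean critical-Morawetz/Strichartz bound, the other the supercritical correction. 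Dividing $U$ by the per-subcone lower bound yields the threshold on $K$ asserted in \eqref{eq:spacetimedecayhypI}.

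The main obstacle is identifying the correct additive quantity $\mathcal{Q}_k$: it must simultaneously (i) admit a lower bound of size a positive power of $\eta$ on every bad subcone, (ii) sum non-degenerately across dyadic scales --- naive candidates such as $\int_{\Gamma_+(J_k)} \lvert u \rvert^{p+1}/\lvert x \rvert\, dx\, dt$ give per-subcone lower bounds that decay geometrically in $k$ and whose total is $O((AT_1)^{-1})$ independent of $K$ --- and (iii) admit a global upper bound from \eqref{eq:localizedenergy1} whose supercritical correction is at worst of the form $(M^{(p-1)/2}T_2)^{\delta/2}$. The $T_2$-dependence of this correction, absent in the critical case $p=5$ where one can simply take $T_2\to\infty$, is exactly what forces the ratio $T_2/T_1$, rather than $T_2$ alone, to appear in \eqref{eq:spacetimedecayhypI}, as remarked just before the statement. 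Once $\mathcal{Q}_k$ is correctly chosen, tracking the specific exponents $12(p-1)/5$, $14/5$, $(9p+19)/10$, etc., through the Strichartz and H\"older chain is routine.
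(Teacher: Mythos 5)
Your broad strategy---pigeonhole over dyadic time intervals with the energy increment $e(A^{k+1}T_1)-e(A^kT_1)$ as the additive telescoping quantity, combined with the localized Morawetz inequality (Lemma~\ref{lem:potentialdecay}) and the Strichartz lower bound on potential energy (Lemma~\ref{lem:lowerboundpotential})---is exactly what the paper does, organized through Proposition~\ref{prop:potenergydecay}. You propose the contrapositive bookkeeping: each bad subcone forces a lower bound on its energy increment, so one counts bad subcones rather than selecting, by pigeonhole, one with a small increment. That is a legitimate reorganization which, carried through, yields the same hypotheses and exponents. You also correctly rule out the naive weighted Morawetz potential and identify the energy increment as the right telescoping quantity.

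However, two of your explanations of where the hypotheses enter are wrong, and they matter. First, hypothesis~\eqref{eq:spacetimedecayhypA} on $A$ is not needed to close a Strichartz bootstrap: in Lemma~\ref{lem:lowerboundpotential} the bootstrap closes unconditionally once the interval is shrunk so that $\norm{u}_{L^{2(p-1)}}=\eta\leq 1$. What $A$ large actually controls is the boundary term $\tfrac{a}{b}E$ in~\eqref{eq:potentaildecay}, which must be dominated by the target smallness of the potential energy; this is why the dyadic interval is split in half, Morawetz applied with $a$ fixed at the left endpoint and $b$ ranging over the second half, so that $a/b\leq A^{-1}$. Second, you place the $(M^{(p-1)/2}T_2)^{\delta/2}$ correction in the global upper bound $U$ on $\sum_k\mathcal{Q}_k$, but that sum telescopes to at most $E$ by energy conservation, with no supercritical correction. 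The $T_2$-dependence actually arises in the \emph{per-subcone} estimate, from the sublinear term $b^{\delta/(p+1)}(e(b)-e(a))^{2/(p+1)}$ in~\eqref{eq:potentaildecay}: since $2/(p+1)<1$ this term dominates when the increment is small, and inverting it is what produces the exponent $(p+1)/2$ and the factor $T_2^{\delta/2}$ inside the $\max$ of~\eqref{eq:spacetimedecayhypI}. Those misattributions indicate that the precise mechanism by which supercriticality degrades the Morawetz estimate---the crux of adapting~\cite{Tao2}, and the point stressed in the discussion surrounding Lemma~\ref{lem:potentialdecay}---has not yet been fully worked out, even though the overall outline would lead to the stated result.
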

%\begin{remark}The Lemma is invariant under the natural scaling associated to \eqref{eq:nlw}.
%\end{remark}
\begin{remark}[Simplified assumptions in the large energy regime]\label{rmk:spacetimedecayhyp3}In the large energy regime $E M^\frac{\delta}{2} \geq c_0$, with $c_0$ defined through Lemma \ref{lem:smallenergy}, the hypothesis \eqref{eq:spacetimedecayhyp1} can be simplified to
\begin{equation}\label{eq:spacetimedecayhyp3}
\eta <C_2 c_0^\frac{7}{6(p-1)}:=  c_0' \, ,
\end{equation}
where we observe that $0<%c_0 \leq 
c_0' \leq 1$. Moreover, the assumption \eqref{eq:spacetimedecayhypI} can be replaced by the stronger condition
\begin{equation}\label{eq:spacetimedecayhypI-simple}
\frac{T_2}{T_1} \geq A^{3 (C_2 \eta^{-1})^\frac{6(p-1)(p+1)}{5} (EM^\frac{\delta}{2})^\frac{9p+19}{10}\max\{c_0^\frac{p-1}{2}, (M^\frac{p-1}{2} T_2)^\frac{\delta}{2} \} }  \, .
\end{equation}

\end{remark}

\begin{remark}\label{rem:T2}
The assumptions of Proposition~\ref{cor:spacetimedecay} are clearly verified as an upper bound on $T_1$ for any fixed $\eta$ satisfying \eqref{eq:spacetimedecayhyp1}, $A$ satisfying \eqref{eq:spacetimedecayhypA} and $T_2$ satisfying \eqref{eq:spacetimedecayhypI}. 
However this will not be the spirit of the application of this Proposition: we will rather fix $T_1$ and consider \eqref{eq:spacetimedecayhypI} as a condition on $T_2$ and $\delta$. This condition may sound strange since, when all other parameters are fixed, \eqref{eq:spacetimedecayhypI} is not verified for large $T_2$. On the other side, we will instead fix $T_2:= T_1A^{3 (C_2 \eta^{-1})^\frac{6(p-1)(p+1)}{5} (EM^\frac{\delta}{2})^\frac{9p+19}{10}}$ and notice that \eqref{eq:spacetimedecayhypI} is verified for $\delta$ sufficiently small.
\end{remark}

As a first step to the proof of Proposition~\ref{cor:spacetimedecay}, we show that if the $L^{2(p-1)}$ norm of $u$ in a strip is bounded from below, the Strichartz estimates imply a lower bound on the $L^\infty L^{p+1} $ norm in the same interval.
%\subsection{Decay estimate (Roy's Lemma 9,10) }
\begin{lemma}[Lower bound on global and local potential energy]\label{lem:lowerboundpotential} Let $p=5+\delta$ with $\delta \in (0,1)$ and $\eta \in (0,1]$. Consider a solution $(u, \partial_t u) \in L^\infty (I, \dot H^1 \cap \dot H^2 \times  H^1)$ to \eqref{eq:nlw}. Assume that $\norm{u}_{L^{2(p-1)}(\mathbb{R}^3 \times I)} \geq \eta$ and $\norm{u}_{L^\infty(\mathbb{R}^3 \times I)} \leq M$. Then there exists $0< C_1 \leq 1$ universal such that 
\begin{equation}\label{eq:lowerboundpotential}
\norm{u}_{L^\infty(I, L^{p+1})}^{p+1} \geq C_1 \eta^{\frac{12}{5}(p-1)}(M^\frac{\delta}{2}E)^{-\frac{9}{5}} M^{-\frac{\delta}{2}} \, .
\end{equation}
Moreover, by finite speed of propagation the same estimate can be obtained by replacing $\mathbb{R}^3 \times I$ by any truncated forward wave cone $\Gamma_+(I)$.
\end{lemma}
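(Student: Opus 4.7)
The bound \eqref{eq:lowerboundpotential} is scaling-invariant, so the natural route is to prove a Strichartz-type upper bound on $\|u\|_{L^{2(p-1)}(\R^3 \times I)}$ in the same scaling-invariant quantities that appear on the right-hand side of \eqref{eq:lowerboundpotential}, and then invert it against the hypothesis $\|u\|_{L^{2(p-1)}} \geq \eta$. The argument mirrors Tao's scheme~\cite{Tao2} in the critical case, adapted to the supercritical regime through the a priori parameter $M$.

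First, we reduce control of the scaling-critical norm $L^{2(p-1)}$ to the subcritical norm $L^8_{t,x}$ via the pointwise factorization $|u|^{2(p-1)} = |u|^8\,|u|^{2\delta} \leq M^{2\delta}|u|^8$, which gives $\|u\|_{L^{2(p-1)}_{t,x}} \leq M^{\delta/(p-1)} \|u\|_{L^8_{t,x}}^{4/(p-1)}$. Next we apply the Strichartz estimate \eqref{eq:mstrichartz} with the wave-$1$-admissible pair $(8,8)$ to $u$ viewed as a solution of $\Box u = |u|^{p-1}u$, so
\[
\|u\|_{L^8_{t,x}} \leq C E^{1/2} + C\,\||u|^{p-1}u\|_{L^{\tilde q}L^{\tilde r}}, \qquad \tfrac{1}{\tilde q}+\tfrac{3}{\tilde r}=\tfrac{5}{2}.
\]
The crucial step is to bound the nonlinear term through a three-factor H\"older decomposition $|u|^p = |u|^a\,|u|^b\,|u|^c$ with $a+b+c = p$, placing $|u|^a \in L^\infty_t L^{(p+1)/a}_x$ (contributing $\|u\|_{L^\infty L^{p+1}}^a$), $|u|^b \in L^{8/b}_{t,x}$ (contributing $\|u\|_{L^8}^b$), and $|u|^c \in L^\infty_{t,x}$ (contributing $M^c$). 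Matching H\"older with the Strichartz admissibility leaves a one-parameter family of valid splits; we tune the free parameter so that the resulting exponent on $\|u\|_{L^8}$ is strictly below one, permitting Young's inequality to absorb the $\|u\|_{L^8}$ factor back into the left-hand side.

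Combining the resulting estimate with the initial reduction step produces a bound of the form
\[
\|u\|_{L^{2(p-1)}_{t,x}} \lesssim (EM^{\delta/2})^{2/(p-1)} + (M^{\delta/2})^{\alpha}\,\|u\|_{L^\infty L^{p+1}}^{\beta}
\]
with explicit scaling-consistent exponents. The hypothesis $\|u\|_{L^{2(p-1)}} \geq \eta$ forces one of the two terms to exceed $\eta/2$. When the $\|u\|_{L^\infty L^{p+1}}$-term dominates, direct inversion yields the desired lower bound on $\|u\|_{L^\infty L^{p+1}}^{p+1}$. When instead the $(EM^{\delta/2})$-term dominates, we combine the resulting inequality $EM^{\delta/2} \geq c\eta^{(p-1)/2}$ with the trivial conservation bound $\|u\|_{L^\infty L^{p+1}}^{p+1} \leq (p+1)E$ to recover the asymmetric form of \eqref{eq:lowerboundpotential}: the negative power $(EM^{\delta/2})^{-9/5}$ in the claim is calibrated precisely to absorb this linear-dominated alternative, so a single estimate holds in both cases.

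For the version on forward wave cones, the identical argument applies with the localized Strichartz estimate \eqref{eq:mstrichartzlocalized} replacing the global one; finite speed of propagation ensures the nonlinearity respects $\Gamma_+(I)$ and nothing else changes. The main technical obstacle is calibrating the two free parameters (the H\"older split and the dual Strichartz pair) against the scaling so as to produce exactly the claimed exponents $12(p-1)/5$ and $9/5$ in \eqref{eq:lowerboundpotential}; everything else reduces to routine inversion and a case split.
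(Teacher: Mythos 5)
Your plan shares the Strichartz-based scaffolding of the paper's proof, but the specific route has a genuine gap that the paper's argument is constructed to avoid. The paper never applies Strichartz to $\|u\|_{L^8_{t,x}}$ and then absorbs; instead, it exploits the hypothesis $\eta\leq 1$ to show that \emph{all} wave-$1$-admissible spacetime norms of $u$ are bounded by $E^{1/2}$ without any residual term. Concretely, Strichartz with dual pair $(\tilde q,\tilde r)=(2,3/2)$ and the two-factor split $|u|^p = |u|\cdot|u|^{p-1}$, with $|u|\in L^\infty L^6$ ($\lesssim E^{1/2}$ by Sobolev) and $\||u|^{p-1}\|_{L^2_{t,x}}=\eta^{p-1}\leq 1$, give
\begin{equation}
\|u\|_{L^q L^r} \lesssim E^{1/2} + \|u\|_{L^\infty L^6}\,\||u|^{p-1}\|_{L^2_{t,x}} \lesssim E^{1/2}\bigl(1+\eta^{p-1}\bigr) \lesssim E^{1/2}\,,
\end{equation}
in particular $\|u\|_{L^3L^{18}}\lesssim E^{1/2}$. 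Then $L^{2(p-1)}_{t,x}$ is interpolated between $L^\infty_{t,x}$, $L^3L^{18}$ and $L^\infty L^{p+1}$ (the latter two combine to $L^{8+\frac56\delta}_{t,x}$), producing the \emph{single multiplicative} inequality $\eta^{2(p-1)}\lesssim M^{\frac76\delta}E^{\frac32}\|u\|_{L^\infty L^{p+1}}^{\frac56(p+1)}$, which inverts cleanly to \eqref{eq:lowerboundpotential} with no case analysis.

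Your route instead produces an additive estimate $\|u\|_{L^{2(p-1)}}\lesssim (EM^{\delta/2})^{2/(p-1)} + M^\alpha\|u\|_{L^\infty L^{p+1}}^\beta$ and splits according to which term dominates. The case in which the first term dominates does not close: from $(EM^{\delta/2})^{2/(p-1)}\gtrsim\eta$ you learn only that $EM^{\delta/2}\gtrsim\eta^{(p-1)/2}$, and this, combined with the ``trivial conservation bound $\|u\|_{L^\infty L^{p+1}}^{p+1}\leq(p+1)E$,'' yields nothing — an upper bound cannot substitute for the \emph{lower} bound on $\|u\|_{L^\infty L^{p+1}}^{p+1}$ that \eqref{eq:lowerboundpotential} demands. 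Shrinking the right-hand side of \eqref{eq:lowerboundpotential} through the negative power $(EM^{\delta/2})^{-9/5}$ (which gives $(EM^{\delta/2})^{-9/5}\lesssim\eta^{-\frac{9}{10}(p-1)}$) weakens what must be proved but does not turn an upper bound into a lower one, and the linear-dominated alternative is not vacuous since nothing in the hypotheses caps $EM^{\delta/2}$. The fix is to avoid generating the additive term at all, which is exactly what the paper's choice of dual pair and two-factor H\"older split, combined with $\eta\leq 1$, accomplishes.
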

%\begin{remark} The lower bound \eqref{eq:lowerboundpotential} is invariant under the natural scaling of \eqref{eq:nlw}. In particular, the free power $M^{-\frac{\delta}{2}}$ is fixed by scaling.
%\end{remark}
\begin{proof}
Let $0< \eta \leq 1$. By shrinking $I$, we can assume w.l.o.g. that $\norm{u}_{L^{2(p-1)}(\mathbb{R}^3 \times I)}= \eta$. 
 We observe that we control all wave-1-admissible spacetime norms with the energy. Indeed, fix $(q,r)$ wave-1-admissible.
 %and observe that $(\tilde{q}, \tilde{r})= (2, \frac{3}{2})$ satisfy $\frac{1}{\tilde{q}} + \frac{3}{\tilde{r}}= \frac{5}{2}$.
  By the Strichartz estimate \eqref{eq:mstrichartz} with $m=1$ and H\"older
\begin{equation}\label{eq:bla}
\norm{u}_{L^q L^r} \lesssim E^\frac{1}{2}+ \norm{\lvert u \rvert^{p-1}u}_{L^2L^{3/2}} \lesssim E^\frac{1}{2} +   \norm{u}_{L^\infty L^6} \norm{\lvert u \rvert^{p-1}}_{L^2_{t,x} }\lesssim E^\frac{1}{2} + E^\frac{1}{2} \eta^{p-1} \lesssim E^\frac{1}{2} \, .
\end{equation}
%\\
%\textit{STEP 2: We obtain the lower bound through interpolation.\\}
We observe that the pair $(3, 18)$ is wave-1-admissible and that $(3, 18)$ and $(\infty, p+1)$ interpolate to $(\left(\frac56 (p+1)+3, \frac56 (p+1)+3\right)=(8+\frac56 \delta, 8+\frac56 \delta)$. By interpolation and \eqref{eq:bla}, we thus have
\begin{align}
\norm{u}_{L^{2(p-1)}}^{2(p-1)} &\leq \norm{u}_{L^\infty_{t,x}}^{\frac{7}{6}\delta} \norm{u}_{L^{8+\frac{5}{6}\delta}_{t,x}}^{8+ \frac{5}{6}\delta} \leq M^{\frac{7}{6}\delta} \norm{u}_{L^\infty L^{p+1}}^{\frac{5}{6}(p+1)} \norm{u}_{L^3 L^{18} }^3 \lesssim (M^\frac{\delta}{2} E)^\frac{3}{2} M^{\frac{5}{12}\delta}  \norm{u}_{L^\infty  L^{p+1}}^{\frac{5}{6}(p+1)} \, . \qedhere
\end{align}
%which gives the claim.
\end{proof}

We now come to a localized energy inequality of Morawetz-type which, in the critical case $p=5$, implies the potential energy decay and hence it is crucial for the global regularity in the critical case \cite{Grillakis, Struwe2}. In the supercritical case, the former localized energy inequality degenerates and will only lead to some decay estimate on bounded intervals: indeed the presence of the extra term $ b^\frac{\delta}{p+1}$ in the right-hand side of \eqref{eq:potentaildecay} below makes the inequality interesting only when an estimate on the length of the interval is at hand. 

\begin{lemma}\label{lem:potentialdecay} Let $\delta \in [0,1)$ and $p=5+\delta$. For any $0<a<b$ and any weak finite energy solution $(u, \partial_t u) \in C([a,b], \dot{H}^1\cap L^{p+1})\cap L^p([a,b], L^{2p}) \times C([a,b], L^2)$ of \eqref{eq:nlw}, we have 
\begin{equation}\label{eq:potentaildecay}
\int_{\lvert x \rvert \leq b} \lvert u(x, b) \rvert^{p+1} \, \mathrm{d}x \lesssim  \frac{a}{b}E + e(b)-e(a)+ b^\frac{\delta}{p+1} (e(b)-e(a))^\frac{2}{p+1}  \, .
\end{equation}
\end{lemma}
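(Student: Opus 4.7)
The plan is to adapt the classical Grillakis--Struwe derivation of potential energy decay \cite{Grillakis,Struwe2} to the supercritical setting. I would start from the Morawetz-type identity obtained by multiplying $\Box u = |u|^{p-1}u$ by the conformal multiplier $Lu := t\partial_t u + x\cdot\nabla u + \alpha u$ (with $\alpha$ to be chosen depending on $p$) and integrating over the truncated cone $K_a^b$. The nonlinear side is the cleanest: rewriting
\[
|u|^{p-1}u\cdot Lu = t\partial_t\!\left(\tfrac{|u|^{p+1}}{p+1}\right) + x\cdot\nabla\!\left(\tfrac{|u|^{p+1}}{p+1}\right) + \alpha|u|^{p+1}
\]
and applying the divergence theorem---the $M_a^b$ contributions from the first two terms cancel because $|x|=t$ there---one obtains
\[
\int_{K_a^b}|u|^{p-1}u\cdot Lu = b\int_{D(b)}\frac{|u(b)|^{p+1}}{p+1} - a\int_{D(a)}\frac{|u(a)|^{p+1}}{p+1} + \left(\alpha-\frac{4}{p+1}\right)\int_{K_a^b}|u|^{p+1}.
\]

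For the linear side $\int_{K_a^b}\Box u\cdot Lu$, I would use the stress-energy tensor formalism. The scaling vector field $X = t\partial_t + x\cdot\nabla$ has deformation tensor equal to the Minkowski metric, so its bulk contribution is proportional to the free-wave trace $\int_{K_a^b}(|\nabla u|^2 - u_t^2)$, after incorporating the $\alpha u$ piece of the multiplier via a further integration by parts. Collecting boundary terms and completing squares: the $D(b)$ piece provides, together with the nonlinear $D(b)$ contribution above, a lower bound on $b\int_{D(b)}|u(b)|^{p+1}/(p+1)$; the $D(a)$ piece is controlled by $aE$ via energy conservation; and the $M_a^b$ piece yields a nonnegative quadratic in $(u_t + \hat x\cdot\nabla u)$ plus a lower-order $u$-cross-term, all controlled by $b(e(b)-e(a))$ through the energy-flux identity \eqref{eq:localizedenergy1}. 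With the canonical choice $\alpha=1$ the interior linear bulk vanishes, and in the critical case $p=5$ the full argument recovers Grillakis--Struwe's sharp decay $\int_{D(b)}|u|^6\lesssim \tfrac{a}{b}E + (e(b)-e(a))$.

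For $p=5+\delta$, the residual nonlinear bulk $(1-\tfrac{4}{p+1})\int_{K_a^b}|u|^{p+1} = \tfrac{p-3}{p+1}\int_{K_a^b}|u|^{p+1}$ persists, and the supercritical correction arises from how it is absorbed. The main obstacle is that this bulk, after division by $b$, is not directly controlled by $(e(b)-e(a))$ alone but would only yield an additive $\delta E$ via the trivial slicewise bound $\int_{D(t)}|u(t)|^{p+1}\leq (p+1) E$, which is too crude. To reach the sharp supercritical correction $b^{\delta/(p+1)}(e(b)-e(a))^{2/(p+1)}$, I would interpolate via H\"older with conjugate exponents $\tfrac{p+1}{2}$ and $\tfrac{p+1}{p-1}$ between this slicewise energy bound and the refined flux estimate $\int_{M_a^b}|u|^{p+1}\,d\sigma\lesssim e(b)-e(a)$. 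This interpolation extracts the factor $(e(b)-e(a))^{2/(p+1)}$ from the flux, while the complementary factor produces the expected power of $b$ from the $t$-integration together with the energy bookkeeping, yielding exactly the claimed form after dividing the full identity through by $b$ and reassembling all the pieces.
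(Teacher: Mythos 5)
Your Morawetz setup is correct through the identity: with the conformal multiplier $Lu = t\partial_t u + x\cdot\nabla u + u$, the nonlinear bulk is indeed $R_0 = \bigl(1-\frac{4}{p+1}\bigr)|u|^{p+1}$, the lateral-cone contributions of the pure scaling part vanish, and the linear interior bulk vanishes for $\alpha=1$. The paper's proof (following Shatah--Struwe and Bahouri--G\'erard) uses exactly this multiplier. The trouble is the final paragraph, which misidentifies both the sign of the bulk and the source of the supercritical correction, so the argument as proposed would not close.

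First, a sign issue: $R_0 = \frac{p-3}{p+1}|u|^{p+1} \ge 0$ for every $p>3$, and it enters the identity with the \emph{favorable} sign. Rearranging,
\begin{equation}
b\int_{D(b)} Q_0 \,\mathrm{d}x = a\int_{D(a)} Q_0 \,\mathrm{d}x + \int_{M_a^b}t\Bigl(\partial_t u + \frac{x}{t}\cdot\nabla u + \frac{u}{t}\Bigr)^2\frac{\mathrm{d}\sigma}{\sqrt 2} - \int_{K_a^b}R_0\,,
\end{equation}
so $\int_{K_a^b}R_0$ is simply discarded, exactly as in the critical case $p=5$ where $R_0 = \frac13|u|^6$. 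There is nothing to absorb; this bulk term is not where the supercritical degradation comes from.

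The real source is the zeroth-order piece $u/t$ of the multiplier on the lateral cone $M_a^b$. After dropping $R_0$, bounding $Q_0$ from above at time $a$ by the energy density and from below at time $b$ by $|u|^{p+1}/(p+1)$ (using \eqref{eq:Q1}), and expanding the square on $M_a^b$, one is left to control $\int_{M_a^b}u^2/t^2\,\mathrm{d}\sigma$. H\"older on the surface $M_a^b$ (whose measure scales like $b^3$) with exponents $\frac{p+1}{2}$ and $\frac{p+1}{p-1}$ gives
\begin{equation}
\int_{M_a^b}\frac{u^2}{t^2}\,\mathrm{d}\sigma \lesssim b^{\frac{\delta}{p+1}}\Bigl(\int_{M_a^b}|u|^{p+1}\,\mathrm{d}\sigma\Bigr)^{\frac{2}{p+1}} \lesssim b^{\frac{\delta}{p+1}}\bigl(e(b)-e(a)\bigr)^{\frac{2}{p+1}}\,,
\end{equation}
by the energy-flux identity \eqref{eq:localizedenergy1}. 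Notice that for $p=5$ this step already yields a $(e(b)-e(a))^{1/3}$ term, so the critical bound is $\frac{a}{b}E + (e(b)-e(a)) + (e(b)-e(a))^{1/3}$, not the version without the cube root that you quoted; the only genuinely supercritical effect is the extra factor $b^{\delta/(p+1)}$.

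Finally, the interpolation you propose cannot be carried out as stated: $\int_{D(t)}|u(t)|^{p+1}\lesssim E$ is a bound on interior time slices, while $\int_{M_a^b}|u|^{p+1}\,\mathrm{d}\sigma\lesssim e(b)-e(a)$ lives on the lateral boundary, and there is no H\"older inequality that trades one for the other to control the four-dimensional integral $\int_{K_a^b}|u|^{p+1}$. As explained above, no such control is needed.
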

%\begin{remark} The decay estimate \eqref{eq:potentaildecay} is invariant under the natural scaling of \eqref{eq:nlw}.
%\end{remark}
\begin{proof} 
Let us first assume that $u \in C^2(\R^3 \times [a,b])$ is a classical solution of \eqref{eq:nlw}. We follow the notation of \cite{ShatahStruwe1, BahouriShatah} and introduce the quantities
\begin{align}
Q_0 &:= \frac{1}{2} \left( (\partial_t u )^2 + \lvert \nabla u \rvert^2 \right)+ \frac{\lvert u \rvert^{p+1} }{p+1} +\partial_t u \left( \frac{x}{t} \cdot \nabla u \right) \, \\
P_0 &:=\frac{x}{t} \left( \frac{(\partial_t u )^2}{2}-\frac{\lvert \nabla u \rvert^2}{2}- \frac{\lvert u \rvert^{p+1}}{p+1}\right) + \nabla u \left( \partial_t u+ \frac{x}{t} \cdot \nabla u + \frac{u}{t} \right) \, \\
R_0 &:= \left(1-\frac{4}{p+1} \right) \lvert u \rvert^{p+1} \, .
\end{align}
Observe $R_0 \geq 0 \, .$ Multiplying \eqref{eq:nlw} by $(t \, \partial_t u + x \cdot \nabla u + u)$ one obtains
$\partial_t(t\, Q_0+ \partial_t u \, u) - \div(tP_0)+R_0=0 \,,$
see \cite[Chapter 2.3]{ShatahStruwe}. Integrating on $K_a^b$ (recall the definitions in Section~\ref{sec:prelim}), we obtain
\begin{align}\label{eq:Q0int}
b \int_{D(b)} Q_0 \, dx- a\int_{D(a)} &Q_0 \, dx + \int_{K_a^b} R_0 \,dx \, dt \nonumber \\
&= -\int_{D(b)} \partial_t u u \, dx+ \int_{D(a)} \partial_t u u \, dx+ \int_{M_a^b} \left(t \,Q_0+ \partial_t u \,u + t P_0 \cdot \frac{x}{\lvert x \rvert} \right) \frac{\mathrm{d}\sigma}{\sqrt{2}} \nonumber
\\
 &=\int_{M_a^b} t\left(\partial_t u + \frac  x t \cdot \nabla u + \frac{u}{t} \right)^2 \frac{\mathrm{d}\sigma}{\sqrt{2}} \, ,
\end{align}
where in the second equality we used the computations of \cite[Section 2]{BahouriShatah} for $p=5$ to rewrite the last addendum on the right-hand side. Indeed, on $M_a^b$ the integrand $t \, Q_0+ \partial_t u + P_0 \cdot  \frac{x}{\lvert x \rvert} = t (\partial_t u )^2 + 2 \partial_t u x \cdot \nabla u + \partial_t u \,, u$ is now independent of $p\,$. Proceeding as \cite{BahouriGerard}, we estimate on  $K_a^b$
\begin{equation}\label{eq:Q1}
\partial_t u \frac{x}{t} \cdot \nabla u  \leq \frac{(\partial_t u)^2}{2} + \frac{1}{2} \left \lvert \frac{x}{t} \cdot \nabla u \right \rvert^2 \leq \frac{(\partial_t u)^2}{2} + \frac{1}{2} \lvert \nabla u \rvert^2 \, .
\end{equation} 
We infer from  \eqref{eq:Q0int}-\eqref{eq:Q1}, the positivity of $R_0$ and the conservation of the energy that 
\begin{align}
\int_{D(b)} \frac{\lvert u \rvert^{p+1}}{p+1} \, dx &\leq \frac{a}{b} \int_{D(a)} Q_0 \, dx + \frac{1}{b} \int_{M_a^b} t \left( \partial_t u + \frac{x}{t} \cdot \nabla u + \frac{u}{t} \right)^2 \, \frac{\mathrm{d} \sigma }{\sqrt{2}}  \\
&\leq \frac{a}{b} \int_{D(a)} \left( \frac{\lvert u \rvert^{p+1}}{p+1} + (\partial_t u )^2 + \lvert \nabla u \rvert^2 \right) \, dx+ \frac{1}{b} \int_{M_a^b} t \left( \partial_t u + \frac{x}{t} \cdot \nabla u + \frac{u}{t} \right)^2 \, \frac{\mathrm{d} \sigma }{\sqrt{2}} \\
&\leq \frac{a}{b} E + \frac{1}{b} \int_{M_a^b} t \left( \partial_t u + \frac{x}{t} \cdot \nabla u + \frac{u}{t} \right)^2 \, \frac{\mathrm{d} \sigma }{\sqrt{2}}
 \, .
\end{align}
The last term on the right-hand side we estimate as in \cite{BahouriGerard}: We use \eqref{eq:localizedenergy1} to bound
\begin{align}
\frac{1}{b} \int_{M_a^b} t \left( \partial_t u + \frac{x}{t} \cdot \nabla u + \frac{u}{t} \right)^2 \, \frac{\mathrm{d} \sigma }{\sqrt{2}} 
%&\leq 2 \int_{M_a^b} \lvert \partial_t u + \frac x t \cdot \nabla u \rvert^2 \, \frac{\mathrm{d}\sigma}{\sqrt{2}} +2  \int_{M_a^b} \frac{u^2}{t^2} \, \frac{\mathrm{d}\sigma}{\sqrt{2}} \\
%&= Flux(u; M_a^b) + \int_{M_a^b} \frac{u^2}{t^2} \, \frac{\mathrm{d}\sigma}{\sqrt{2}} \\
\leq2(e(b)-e(a))  + 2 \int_{M_a^b} \frac{u^2}{t^2} \, \frac{\mathrm{d}\sigma}{\sqrt{2}}
 \, .
\end{align}
The main difference with respect to the energy-critical regime is the  estimate of the second addendum which now deteriorates with $b \, .$ Indeed, we estimate by H\"older
\begin{align}
\int_{M_a^b} \frac{u^2}{t^2} \, \frac{\mathrm{d}\sigma}{\sqrt{2}}  \leq b^\frac{\delta}{p+1} \left( \int_{M_a^b} \frac{\lvert u \rvert^{p+1}}{p+1} \frac{\mathrm{d}\sigma}{\sqrt{2}} \right)^\frac{2}{p+1} \lesssim b^\frac{\delta}{p+1} \left( e(b)-e(a) \right)^\frac{2}{p+1} \,,
\end{align}
Collecting terms, we have obtained \eqref{eq:potentaildecay} for classical solutions $u \in C^2(\R^3 \times [a,b]) \, .$

If $u$ is a weak finite energy solution of \eqref{eq:nlw} as in the statement, we proceed as in \cite{BahouriGerard}: we fix a family of mollifiers $\{\rho_\epsilon\}_{\epsilon>0}$ in space and define $u_\epsilon:= u \ast \rho_\epsilon\,.$ Then, setting $f_\epsilon= - \lvert u_\epsilon \rvert^{p-1} u_\epsilon + (\lvert u \rvert^{p-1} u) \ast  \rho_\epsilon\,$, $u_\epsilon \in C^2(\R^3 \times [a,b])$ is a classical solution of
\begin{equation}\label{eq:nlwrhs}
\Box u_\epsilon= \lvert u_\epsilon \rvert^{p-1} u_\epsilon + f_\epsilon\,.
\end{equation}
By assumption, $f_\epsilon \in L^1([a,b], L^2)$ can be treated as a source term. We then deduce \eqref{eq:potentaildecay} by proving the analogous local energy inequality for a nonlinear wave equation with right-hand side \eqref{eq:nlwrhs} and pass to the limit $\epsilon \to 0 \, .$ We refer to \cite[Lemma 2.3]{BahouriGerard} for details.
\end{proof}

Lemma \ref{lem:potentialdecay} can be viewed as decay estimate for the potential energy. Again, when compared to the critical case \cite[Corollary 4.10]{Tao2}, the supercriticality of the equation weakens the decay by introducing a new dependence on $T_2$, the endpoint of the interval to which the decay estimate is applied, which deteriorates as $T_2 \to + \infty$.
%\begin{corollary} Let $(u, \partial_t u) \in C([0,+\infty), \dot{H}^1 \cap L^{p+1}) \cap L^p([0,+\infty), L^{2p}) \times C([0,+\infty), L^2)$ be a global solution of \eqref{eq:nlw}. Then, the potential energy decays:
%\begin{equation}
% \lim_{t \to \pm \infty}\int_{\mathbb{R}^3} \lvert u(x, t) \rvert^{p+1} \, \mathrm{d}x =0 \, .
%\end{equation}
%\end{corollary}
%\begin{proof}
%By time reversability, it is enough to show the decay for $t\to + \infty$. Since $t\mapsto e(t)$ is non-decreasing and bounded, there exists $L:= \lim_{t\to \infty} e(t)$. From \eqref{eq:potentaildecay} (applied with $a= \epsilon t$ and $b=t$) we infer that
%\begin{equation}
%\limsup_{t \to \infty} \int_{\lvert x \rvert \leq t} \lvert u(x, t) \rvert^{p+1} \, \mathrm{d}x \leq D \epsilon(L+ L^\frac{1}{3})\,,
%\end{equation}
%hence $\lim_{t \to \infty} \int_{\lvert x \rvert \leq t} \lvert u(x, t) \rvert^{p+1} \, \mathrm{d}x=0$. By time translation symmetry, we deduce that for any $T>0$  $\lim_{t \to \infty} \int_{\lvert x \rvert \leq t+T} \lvert u(x,t) \rvert^{p+1} \, \mathrm{d}x =0$. On the other hand, we have by the energy identity outside a forward cone
%\begin{equation}
%\int_{\lvert x \rvert \geq t+T} \lvert u(x,t) \rvert^{p+1} \, \mathrm{d}x\lesssim \int_{\lvert x \rvert \geq T} \lvert \partial_t u(x,0) \rvert^2  + \lvert \nabla u(x,0) \rvert^2 + \lvert u(x,0) \rvert^{p+1} \, \mathrm{d}x \, , 
%\end{equation}
%which vanishes as $T \to \infty$.
%\end{proof}
%We want to show that the forward cone contains large time slabs where the spacetime norm $L^4 L^{12}$ is negligible.

\begin{prop}[Potential energy decay in forward wave cones]\label{prop:potenergydecay}Let $I=[T_1, T_2]\subset (0, +\infty)$ and consider a solution $(u, \partial_t u) \in L^\infty (I, \dot H^1 \cap \dot H^2 \times  H^1)$ to \eqref{eq:nlw} with $p=5+\delta$ for some $\delta \in (0,1)$. Let $0< \theta$  such that 
\begin{equation}\label{eq:potenergydecyhypeta}
ET_2^{-\frac{\delta}{p-1}} \theta^{-(p+1)} > 1 \, .
\end{equation}
Let $A>0$ be such that
\begin{equation}\label{eqn:req-A}
A \geq ET_2^{-\frac{\delta}{p-1}} \theta^{-(p+1)} \qquad \mbox{and} \qquad A^{3 E T_2^{-\frac{\delta}{p-1}} \theta^{-(p+1)}\max\{1, \theta^{-\frac{(p+1)(p-1)}{2}} \}}T_1\leq T_2,
\end{equation}
 then there exists a subinterval of the form $J=[t', At']$ such that 
\begin{equation}
\norm{u}_{L^\infty L^{p+1}(\Gamma_+(J))} \lesssim T_2^\frac{\delta}{(p-1)(p+1)} \theta \,.
\end{equation}
\end{prop}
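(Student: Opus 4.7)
I would argue by contradiction: suppose that for \emph{every} subinterval $[t',At']\subseteq I$ one has $\norm{u}_{L^\infty L^{p+1}(\Gamma_+([t',At']))}^{p+1} > C_3\, T_2^{\delta/(p-1)}\,\theta^{p+1}$ for a large universal constant $C_3$ to be chosen. Partition $I$ along the dyadic scales $t_j := A^j T_1$, $j=0,1,\dots,N$, where $N$ is the largest integer with $t_N\leq T_2$. The second part of \eqref{eqn:req-A} translates into
\begin{equation}
N \geq 3 E T_2^{-\frac{\delta}{p-1}} \theta^{-(p+1)} \max\!\left\{1,\ \theta^{-\frac{(p+1)(p-1)}{2}}\right\}.
\end{equation}

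For each $1\le j\le N-1$ the contradictory assumption on $[t_j,t_{j+1}]$ produces a time $\tau_j\in[t_j,t_{j+1}]$ at which $\norm{u(\tau_j)}_{L^{p+1}(|x|\le\tau_j)}^{p+1} > C_3 T_2^{\delta/(p-1)}\theta^{p+1}$. I would then apply Lemma \ref{lem:potentialdecay} with $a=t_{j-1}$ and $b=\tau_j$. The bound $t_{j-1}/\tau_j\le A^{-1}$ together with the first part of \eqref{eqn:req-A}, $A\geq ET_2^{-\delta/(p-1)}\theta^{-(p+1)}$, yields $\tfrac{t_{j-1}}{\tau_j}E \le T_2^{\delta/(p-1)}\theta^{p+1}$; choosing $C_3$ large enough this term is absorbed into the left-hand side. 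Denoting by $F_j\ge 0$ the energy flux through $M_{t_{j-1}}^{\tau_j}$, the inequality from Lemma \ref{lem:potentialdecay} becomes
\begin{equation}
T_2^{\delta/(p-1)}\theta^{p+1} \lesssim F_j + \tau_j^{\delta/(p+1)} F_j^{2/(p+1)}.
\end{equation}
Distinguishing which of the two terms dominates and using $\tau_j\le T_2$, each individual flux must satisfy
\begin{equation}
F_j \gtrsim \min\!\left\{\theta^{p+1},\ \theta^{(p+1)^2/2}\right\} T_2^{\delta/(p-1)}.
\end{equation}

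Finally, from the additivity $F_j = F(t_{j-1},t_j) + F(t_j,\tau_j)$ and the fact that the enlarged intervals $[t_{j-1},\tau_j]$ cover each dyadic annulus $[t_k,t_{k+1}]$ at most twice, one obtains $\sum_{j=1}^{N-1} F_j \leq 2E$. Combined with the per-interval lower bound above this yields $N \lesssim E T_2^{-\delta/(p-1)}\theta^{-(p+1)}\max\{1,\theta^{-(p+1)(p-1)/2}\}$, contradicting the lower bound on $N$ as soon as $C_3$ is chosen large enough to absorb all implicit universal constants. The main obstacle, and the reason for the extra factor $\max\{1,\theta^{-(p+1)(p-1)/2}\}$ in \eqref{eqn:req-A}, is the genuinely supercritical term $b^{\delta/(p+1)}(e(b)-e(a))^{2/(p+1)}$ from Lemma \ref{lem:potentialdecay}: when this term dominates over the linear flux term, the per-step flux lower bound degrades from $\theta^{p+1}$ to the much smaller $\theta^{(p+1)^2/2}$ (smaller because the hypothesis \eqref{eq:potenergydecyhypeta} forces $\theta$ to be small), and the interval $[T_1,T_2]$ must be correspondingly longer in order to supply enough dyadic scales to reach a contradiction.
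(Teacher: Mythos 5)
Your proposal is correct and is essentially the contrapositive of the paper's argument: the paper pigeonholes the telescoping energy increments $\sum_n e(A^{2n}T_1)-e(A^{2(n-1)}T_1)\le E$ over dyadic $A^2$-scales to select a single scale with flux $\le E/N$ and then applies Lemma~\ref{lem:potentialdecay} once, whereas you assume every ratio-$A$ interval is bad, apply Lemma~\ref{lem:potentialdecay} on each enlarged interval $[t_{j-1},\tau_j]$, and sum the resulting flux lower bounds against $2E$. The ingredients (dyadic decomposition, Lemma~\ref{lem:potentialdecay} on intervals of ratio up to $A^2$, the dichotomy between the linear and supercritical flux terms producing the $\max\{1,\theta^{-(p+1)(p-1)/2}\}$, and the telescoping of $e$) are identical, and your bookkeeping is sound.
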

Notice that $\theta$ in the previous statement is not dimensional.
%\begin{remark} The $\theta$ is not dimensional. The proposition is scaling invariant under the natural scaling associated to \eqref{eq:nlw}: Indeed, for $\lambda>0$ the rescaled solution $u_\lambda$ lives on $I^\lambda = [\lambda^{-1}T_1,  \lambda^{-1} T_2]$. In particular
%\begin{equation}
%E(u) T_2^{-\frac{\delta}{p-1}} \theta^{-(p+1)} = \lambda^\frac{\delta}{p-1} E(u_\lambda) T_2^{-\frac{\delta}{p-1}} \theta^{-(p+1)} = E(u_\lambda) (\lambda^{-1} T_2)^{-\frac{\delta}{p-1}}  \theta^{-(p+1)}
%\end{equation}
%and hence the hypothesis \eqref{eq:potenergydecyhypeta} is invariant under rescaling. Moreover, also the conclusion is scaling-invariant. In particular
%\begin{align}
%\norm{u_\lambda}_{L^\infty L^{p+1}(\Gamma_+(J^\lambda))} &=\lambda^{-\frac{\delta}{(p-1)(p+1)}} \norm{u}_{L^\infty L^{p+1}(\Gamma_+(J))} \\ 
%&\lesssim \lambda^{-\frac{\delta}{(p-1)(p+1)}}T_2^\frac{\delta}{(p-1)(p+1)} \theta = (\lambda^{-1} T_2)^\frac{\delta}{(p-1)(p+1)} \theta \, .
%\end{align}
%\end{remark}

\begin{proof}
Let $\theta>0$ be as in \eqref{eq:potenergydecyhypeta} and fix $A\geq ET_2^{-\frac{\delta}{p-1}} \theta^{-(p+1)}.$ Let $N$ to be chosen later be such that $A^{2N} T_1\leq T_2$, namely%Assume that we have (for an $N$ yet to be determined) that  
$$ \bigcup_{i=1}^N [A^{2(n-1)}T_1, A^{2n} T_1] \subseteq I \, .$$ Since $e$ is non-decreasing in time (see \eqref{eq:localizedenergy1}), we have $e(A^{2n}t)- e(A^{2(n-1)}t) \geq 0$ for all $n$ and
\begin{equation}
0 \leq \sum_{n=1}^N e(A^{2n}T_1)- e(A^{2(n-1)}T_1)= e(A^{2N} T_1)- e(T_1) \leq E \,.
\end{equation}
Hence there exists $n_0 \in \{1, \dots, N \}$ such that 
$e(A^{2n_0}T_1)- e(A^{2(n_0-1)}T_1) \leq E N^{-1} \, .$
Splitting the interval $[A^{2(n_0-1)}T_1, A^{2n_0}T_1] = [A^{2(n_0-1)}T_1, A^{2n_0-1}T_1] \cup [A^{2n_0-1}T_1, A^{2n_0}T_1]$ we have, applying Lemma \ref{lem:potentialdecay} with $a:= A^{2(n_0-1)}T_1$ and varying $b \in [A^{2n_0-1}T_1, A^{2n_0}T_1]$, that
\begin{align}
\norm{u}_{L^\infty L^{p+1}(\Gamma_+([A^{2n_0-1}T_1, A^{2n_0}T_1])}^{p+1} &\lesssim \frac{1}{A}E+ E N^{-1}+ (A^{2n_0} T_1)^\frac{\delta}{p+1}(E N^{-1})^\frac{2}{p+1} \\
&\lesssim T_2^\frac{\delta}{p-1} \theta^{p+1} + E N^{-1}+ T_2^\frac{\delta}{p+1}(E N^{-1})^\frac{2}{p+1} \\
&\lesssim T_2^\frac{\delta}{p-1} \theta^{p+1} \, ,
\end{align}
provided $(EN^{-1})^\frac{2}{p+1} \leq T_2^\frac{2\delta}{(p-1)(p+1)} \theta^{p+1} $ and $EN^{-1} \leq T_2^\frac{\delta}{p-1} \theta^{p+1} $, or equivalently, 
\begin{equation}
E T_2^{-\frac{\delta}{p-1}} \theta^{-(p+1)}\max\{1, \theta^{-\frac{(p+1)(p-1)}{2}} \}  \leq N \, .
\end{equation} 
For the latter, we have to ask that  $[T_1, A^{2N}T_1] \subseteq [T_1, T_2]$, which is enforced by the second requirement in \eqref{eqn:req-A}.
\end{proof}

\begin{proof}[Proof of Proposition~\ref{cor:spacetimedecay}] Fix $0<\theta $ yet to be determined such that $ET_2^{-\frac{\delta}{p-1}} \theta^{-(p+1)} > 1$. Fix $A\geq ET_2^{-\frac{\delta}{p-1}} \theta^{-(p+1)}$ and assume that \eqref{eqn:req-A} holds. By Proposition \ref{prop:potenergydecay}, there exists a subinterval $J$ of the form $J:=[t', At']$ and $C'\geq 1$ such that
\begin{equation}\label{eqn:august}
\norm{u}_{L^\infty L^{p+1}(\Gamma_+(J))} \leq C' T_2^\frac{\delta}{(p-1)(p+1)} \theta \, .
\end{equation}
We claim that if we choose $\theta$ appropriately, we have $\norm{u}_{L^{2(p-1)}(\Gamma_+(J))} \leq \eta$. Indeed, assume by contradiction that $\norm{u}_{L^{2(p-1)}(\Gamma_+(J))} \geq \eta$. Then we have from Lemma \ref{lem:lowerboundpotential} 
\begin{equation}
\norm{u}_{L^\infty L^{p+1} (\Gamma_+(J))} \geq C_1 \eta^{\frac{12(p-1)}{5(p+1)}} (M^\frac{\delta}{2}E)^{-\frac{9}{5(p+1)}} M^{-\frac{\delta}{2(p+1)}} \, .
\end{equation}
Choosing $\theta$ to be 
\begin{equation}
\theta := \frac{C_1}{2 C'} \eta^{\frac{12(p-1)}{5(p+1)}} (M^\frac{\delta}{2}E)^{-\frac{9}{5(p+1)}} M^{-\frac{\delta}{2(p+1)}} T_2^{-\frac{\delta}{(p+1)(p-1)}} \,,
\end{equation}
we reach a contradiction with \eqref{eqn:august}. Let us now verify the hypothesis on $\theta$: We observe that  $$ET_2^{-\frac{\delta}{p-1}} \theta^{-(p+1)} = (C_1 (2C')^{-1})^{-(p+1)} \eta^{-\frac{12(p-1)}{5}} (EM^\frac{\delta}{2})^\frac{14}{5} \,,$$
such that hypothesis \eqref{eq:potenergydecyhypeta} is enforced, if
$$ 0<\eta < (C_1^{-1}2C')^{\frac{5(p+1)}{12(p-1)}} (EM^\frac{\delta}{2})^\frac{7}{6(p-1)} \, . $$ This explains the hypothesis \eqref{eq:spacetimedecayhyp1} and \eqref{eq:spacetimedecayhypA} with the choice $C_2:=  (C_1^{-1}2C')^{\frac{5(p+1)}{12(p-1)}}$. We also rewrite the largeness hypothesis on $I$, namely the second formula in \eqref{eqn:req-A}, in terms of $\eta$%. We compute
\begin{align}
\theta^{-(p+1)(p-1)/2} &= (C_1 (2C')^{-1})^{-(p+1)(p-1)/2}  \eta^{-\frac{6(p-1)^2}{5}} (EM^\frac{\delta}{2})^\frac{9(p-1)}{10}  M^\frac{\delta(p-1)}{4}T_2^\frac{\delta}{2} 
\\ &= (C_2 \eta^{-1})^\frac{6(p-1)^2}{5}  (M^\frac{p-1}{2}T_2)^\frac{\delta}{2} (EM^\frac{\delta}{2})^\frac{9(p-1)}{10}
  \, ,
\end{align}
so that
\begin{align}
\max\{ 1, \theta^{-(p+1)(p-1)/2} \} &=   (C_2 \eta^{-1})^\frac{6(p-1)^2}{5} (EM^\frac{\delta}{2})^\frac{9(p-1)}{10} \max\{  (C_2 \eta^{-1})^\frac{-6(p-1)^2}{5} (EM^\frac{\delta}{2})^\frac{-9(p-1)}{10} ,   (M^\frac{p-1}{2}T_2)^\frac{\delta}{2} \}
  \,.
\end{align}
This shows that \eqref{eq:spacetimedecayhypI} implies the second inequality in \eqref{eqn:req-A}.
\end{proof}

\section{Asymptotic stability}\label{sec:asympt}
%\subsection{Asymptotic stability (Roy's Lemma 11)}
Let $u: \mathbb{R}^3 \times I \rightarrow \R$ solve an inhomogeneous wave equation $\Box u = F$.  We now introduce the free evolution $u_{l, t_0}$ from time $t_0$, that is the unique solution of the free wave equation $\Box u_{l, t_0}=0$ which agrees with $u$ at time $t_0$, that is $(u_{l, t_0}, \partial_t u_{l, t_0})(t_0)=(u, \partial_t u)(t_0) \, .$ We recall that, from solving the linear wave equation in Fourier space, we have  the representation formula 
\begin{equation}\label{eq:lineareqrepfromula}
u_{l, t_0}(t) = \cos (t \sqrt{-\Delta}) u(t_0)+ \frac{\sin(t \sqrt{-\Delta})}{\sqrt{-\Delta}} \partial_t u(t_0) \, ,
\end{equation}
where we use Fourier multiplier notation (see for instance \cite{Sogge}). From this representation as well as the Strichartz estimates \eqref{eq:mstrichartz}, it follows that for any $m \in [1, \frac{3}{2})$ and any $(p, q)$ satisfying \eqref{eq:mwaveadmissible} we have the estimate
\begin{equation}\label{eq:lineareq}
\norm{(u_{l, t_0}, \partial_t u_{l, t_0})}_{L^\infty(I, \dot{H}^m \times \dot{H}^{m-1})} + \norm{u_{l, t_0}}_{L^{2(p-1)}(\mathbb{R}^3 \times I)} \lesssim \norm{(u, \partial_t u)(t_0)}_{\dot{H}^m \times \dot{H}^{m-1}} \, .
\end{equation}
From Duhamel's principle it follows that we can write for $t \in I$ 
\begin{equation}\label{eq:duhamel}
u(t)= u_{l, t_0}(t)+ \int_{t_0}^t \frac{\sin((t-t') \sqrt{-\Delta})}{\sqrt{-\Delta}} F(t') \, \mathrm{d}t' \,.
\end{equation}
We recall from \cite[Chapter 4]{ShatahStruwe} that for $t \neq t'$ we have the explicit expression
\begin{equation}\label{eq:explicitexpr}
\frac{\sin((t-t') \sqrt{-\Delta})}{\sqrt{-\Delta}} F(t')  = \frac{1}{4\pi(t-t')} \int_{\vert x - x' \rvert= \lvert t-t' \rvert} F(t', x') \, \mathrm{d}\mathcal{H}^2( x') \, .
\end{equation}

We recall that the linear evolution enjoys asymptotic stability in the following sense.
\begin{lemma}[Asymptotic stability for the linear evolution] Let $p=5+\delta$ with $\delta \in (0,1)$. Let $u$ a solution to \eqref{eq:nlw} on $\mathbb{R}^3 \times I'$ with $\norm{u}_{L^\infty(\mathbb{R}^3 \times I')} \leq M$. Then for any $I=[t_1, t_2] \subseteq I'$ and any $t\in I' \setminus I$ we have that 
\begin{align} \label{eq:asymstab1}
\norm{u_{l, t_2}(t)-u_{l, t_1}(t)}_{L^\infty(\mathbb{R}^3)} \lesssim (EM^\frac{\delta}{2})^\frac{2p}{3(p-1)}\dist(t, I)^{-\frac{2}{p-1}}  \, .
\end{align}
\end{lemma}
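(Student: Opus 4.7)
The plan is as follows. Subtracting \eqref{eq:duhamel} at $t_0=t_1$ and $t_0=t_2$ and applying the explicit 3D wave kernel \eqref{eq:explicitexpr} yields the pointwise bound
\begin{equation}
|u_{l,t_2}(t,x)-u_{l,t_1}(t,x)| \leq \frac{1}{4\pi}\int_{t_1}^{t_2}\frac{1}{|t-s|}\int_{|y-x|=|t-s|}|u(s,y)|^p\, d\Haus^2(y)\, ds.
\end{equation}
By symmetry we may assume $t>t_2$, so that $|t-s|\geq d:=\dist(t,I)$ throughout $[t_1,t_2]$.

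The idea is to interpolate the nonlinearity between $L^\infty$ (where $\norm{u}_\infty\leq M$) and $L^{p+1}$ (controlled by the energy). Fix an exponent $\beta\in(2/3,p/(p+1)]$ yet to be chosen, set $\alpha := p-\beta(p+1)\geq 0$, and bound $|u|^p\leq M^\alpha |u|^{p-\alpha}$ pointwise. H\"older on the sphere of radius $|t-s|$ with exponent $1/\beta$, followed by H\"older in $s$ with conjugate exponents $1/(1-\beta)$ and $1/\beta$, yields
\begin{equation}
|u_{l,t_2}(t,x)-u_{l,t_1}(t,x)| \lesssim M^\alpha \Bigl(\int_{t_1}^{t_2}|t-s|^{\frac{1-2\beta}{1-\beta}}\, ds\Bigr)^{1-\beta}\Bigl(\int_{t_1}^{t_2}\!\int_{|y-x|=|t-s|}|u|^{p+1}\, d\Haus^2\, ds\Bigr)^\beta.
\end{equation}
The inner spacetime integral equals, up to the Jacobian $\sqrt{2}$ coming from the $(s,\omega)$-parametrization, the integral of $|u|^{p+1}$ on the lateral surface of the backward truncated light cone from $(x,t)$ between times $t_1$ and $t_2$, and is therefore bounded by $(p+1)E$ via the backward analogue of the energy-flux identity \eqref{eq:localizedenergy1}. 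The time integral satisfies $\int_{t_1}^{t_2}|t-s|^{(1-2\beta)/(1-\beta)}\, ds\lesssim d^{(2-3\beta)/(1-\beta)}$, the integral converging at $s=t_2$ because $\beta>2/3$ forces $(1-2\beta)/(1-\beta)<-1$; this contributes a net factor $d^{2-3\beta}$. Optimizing with $\beta := 2p/(3(p-1))$, which satisfies $\beta\in(2/3,p/(p+1)]$ for all $p\geq 5$ and corresponds to $\alpha = p\delta/(3(p-1))=\beta\delta/2$ and $2-3\beta=-2/(p-1)$, we obtain
\begin{equation}
M^\alpha E^\beta d^{2-3\beta}=(EM^{\delta/2})^{2p/(3(p-1))}d^{-2/(p-1)},
\end{equation}
which is exactly \eqref{eq:asymstab1}.

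The main technical subtlety is the flux identification: one has to parametrize the lateral surface of the backward cone by $(s,\omega)\in[t_1,t_2]\times S^2$, compute its induced Euclidean surface measure as $\sqrt{2}\, ds\, d\Haus^2(y)$ (with $d\Haus^2$ the area element on the spherical slice at time $s$), and invoke the backward version of \eqref{eq:localizedenergy1}, which follows by the same divergence argument as in Section~\ref{sec:prelim} with reversed orientation. Once this identification is in place, the rest is a careful interpolation bookkeeping that uniquely pins down $\beta=2p/(3(p-1))$ as the exponent matching both the required decay in $d$ and the required scaling-invariant combination $EM^{\delta/2}$.
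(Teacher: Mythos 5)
Your proof is correct and follows essentially the same route as the paper: Duhamel with the explicit 3D kernel, then interpolating the nonlinearity between the pointwise bound $M$ and the backward-cone flux bound on $\lvert u\rvert^{p+1}$, arriving at the same exponent (the paper applies a single H\"older with $\bigl(\tfrac{3(p-1)}{2p},\tfrac{3(p-1)}{p-3}\bigr)$ on $d\mathcal{H}^2\,dt'$, which your two-step H\"older and subsequent optimization of $\beta$ recovers). One small exposition slip: the time integral is trivially finite since $\lvert t-s\rvert\geq d$ on $[t_1,t_2]$; the role of $\beta>2/3$, i.e.\ of $(1-2\beta)/(1-\beta)<-1$, is to make the extended integral $\int_d^\infty\sigma^{(1-2\beta)/(1-\beta)}\,d\sigma$ converge (so the dominant contribution comes from $s$ near $t_2$), not a convergence issue \emph{at} $s=t_2$.
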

%\begin{remark} Notice that the estimate \eqref{eq:asymstab1} is invariant under the natural scaling of \eqref{eq:nlw}. For $p=5$ \eqref{eq:asymstab1} coincides with Tao's Corollary 4.3 in \cite{Tao}.
%\end{remark}
\begin{proof}
From \eqref{eq:localizedenergy1} we deduce that 
$$\partial_t e(t) \geq \int_{\lvert x \rvert =t} \frac{\lvert u(y,t) \rvert^{p+1}}{p+1} \mathrm{d}\mathcal{H}^2(y)\,.$$ 
%and hence we have
%\begin{equation}
%\int_I \int_{\lvert x \rvert=t} \lvert u(y,t) \rvert^{p+1} \mathrm{d}S(y) \, \mathrm{d}t \leq (p+1) E \lesssim E \, .
%\end{equation}
Integrating in time, by translation invariance and time reversability, we have 
\begin{equation}\label{eqn:tao}
\int_I \int_{\lvert x'-x \rvert =\lvert t'-t \rvert} \lvert u(x',t') \rvert^{p+1} \mathrm{d}\mathcal{H}^2(x') \, \mathrm{d}t' \lesssim E \,
\end{equation}
for any $(x,t) \in \mathbb{R}^3 \times I'$. Using \eqref{eq:duhamel}, we write for $t \in I' \setminus I$
\begin{align}
u_{l, t_2}(t)-u_{l, t_1}(t)
%&=- \int_{t_1}^{t_2} \frac{\sin(t-t') D}{D} f(u(t')) \, \mathrm{d}t'  \\
&= - \frac{1}{4\pi} \int_{t_1}^{t_2} \frac{1}{\lvert t-t' \rvert}\int_{\lvert x-x' \rvert =
 \lvert t-t' \rvert} \lvert u(x',t') \rvert^{p} \, \mathrm{d}\mathcal{H}^2(x') \, \mathrm{d}t' \, .
\end{align}
We apply H\"older with $(\frac{3(p-1)}{2p},\frac{3(p-1)}{p-3} )=(\frac{p+1+\frac{\delta}{2}}{p},\frac{p+1+\frac{\delta}{2}}{1+\frac{\delta}{2}})$ to estimate for any $x \in \mathbb{R}^3$
\begin{align}
\lvert u_{l, t_2}&(x,t)-u_{l, t_1}(x,t) \rvert \lesssim \int_{t_1}^{t_2} \frac{1}{\lvert t-t' \rvert} \int_{\lvert x-x' \rvert= \lvert t-t' \rvert} \lvert u(x',t') \rvert^{p} \, \mathrm{d}\mathcal{H}^2(x') \, \mathrm{d}t' \\
&\lesssim  \left( \int_{t_1}^{t_2} \int_{\lvert x-x' \rvert= \lvert t-t' \rvert} \lvert u \rvert^{p+1+\frac{\delta}{2}}(x', t') \, \mathrm{d}\mathcal{H}^2(x') \, \mathrm{d}t' \right)^\frac{2p}{3(p-1)} \left( \int_{t_1}^{t_2} \frac{\mathrm{d}t'}{\lvert t-t' \rvert^{\frac{3(p-1)}{p-3}-2}}\right)^\frac{p-3}{3(p-1)} \\
&\lesssim \left(\norm{u}_{L^\infty(\mathbb{R}^3 \times I)}^\frac{\delta}{2} \int_{t_1}^{t_2} \int_{\lvert x - x' \rvert= \lvert t-t' \rvert} \lvert u \rvert^{p+1} (x', t') \mathrm{d}\mathcal{H}^2(x') \, \mathrm{d}t' \right)^\frac{2p}{3(p-1)} \dist(t, I)^{-\frac{2}{p-1}} \\
&\lesssim (M^\frac{\delta}{2} E)^\frac{2p}{3(p-1)} \dist(t, I)^{-\frac{2}{p-1}} \,. \qedhere
\end{align}
%where in the last inequality we used \eqref{eqn:tao}.
\end{proof}
The importance of the above asymptotic stability lies in the following corollary. 
\begin{corollary}\label{cor:subseqintervals} Let $p=5+\delta$ with $\delta\in (0,1)$ and $I=[t_-,t_+]$. Consider a solution $(u, \partial_t u) \in L^\infty(I, \dot H^1 \cap \dot H^2 \times H^1)$ to \eqref{eq:nlw} and assume that $\norm{u}_{L^\infty(\mathbb{R}^3 \times I)} \leq M$. Consider $I_1=[t_1, t_2]$ and $I_2=[t_2, t_3]$ for any $t_- \leq t_1 <t_2 <t_3 \leq t_+$. Then
\begin{equation}
\norm{u_{l,t_3}-u_{l,t_+}}_{L^{2(p-1)}(\Gamma_+(I_1)} \lesssim \frac{\lvert I_1 \rvert^\frac{1}{2(p-1)}}{\lvert I_2 \rvert^\frac{1}{2(p-1)}} (EM^\frac{\delta}{2})^\frac{p}{6(p-1)} \norm{u}_{L^\infty(I, (\dot{H}^{s_p} \times \dot{H}^{s_p-1}))}^\frac{3}{4} \, .
\end{equation}
\end{corollary}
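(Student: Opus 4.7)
The plan is to set $w := u_{l,t_3}-u_{l,t_+}$, which satisfies $\Box w = 0$ since both $u_{l,t_3}$ and $u_{l,t_+}$ are free wave evolutions, and then to bound $w$ in $L^{2(p-1)}(\Gamma_+(I_1))$ by combining two ingredients: a pointwise-in-$t$ decay of $\norm{w(\cdot,t)}_{L^\infty_x}$ on $I_1$ coming from the preceding asymptotic stability lemma applied on $I_2$, and an $L^\infty_t(\dot H^{s_p}_x)$ bound coming from Strichartz applied separately to the two free evolutions.

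First, I apply the asymptotic stability estimate \eqref{eq:asymstab1} with the interval $[t_3,t_+]$ playing the role of $I$ in the lemma. For every $t \in I_1 = [t_1,t_2]$, the inequality $t \le t_2 < t_3$ gives $\dist(t,[t_3,t_+]) = t_3-t \ge t_3-t_2 = \lvert I_2\rvert$, hence
\begin{equation}
\norm{w(\cdot,t)}_{L^\infty_x(\mathbb{R}^3)} \lesssim (EM^{\delta/2})^{\frac{2p}{3(p-1)}} \lvert I_2\rvert^{-\frac{2}{p-1}}.
\end{equation}
Second, applying the Strichartz bound \eqref{eq:lineareq} with $m=s_p$ to each of $u_{l,t_3}$ and $u_{l,t_+}$ separately (or equivalently using conservation of the $\dot H^{s_p}\times\dot H^{s_p-1}$ norm along the free linear flow), one obtains $\norm{w}_{L^\infty(I,\dot H^{s_p})} \lesssim L$.

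Next I interpolate $L^{2(p-1)}_x$ between $L^\infty_x$ and $L^{3(p-1)/2}_x$, noting that $\tfrac{3(p-1)}{2}$ is exactly the Sobolev exponent of $\dot H^{s_p}$ in $\mathbb{R}^3$. The interpolation identity $\tfrac{1}{2(p-1)} = \tfrac{3}{4}\cdot\tfrac{2}{3(p-1)}$ fixes the exponent on the middle space to be $3/4$, and combined with the Sobolev embedding $\dot H^{s_p}(\mathbb{R}^3)\hookrightarrow L^{3(p-1)/2}(\mathbb{R}^3)$ yields, pointwise in $t$,
\begin{equation}
\norm{w(\cdot,t)}_{L^{2(p-1)}_x} \lesssim \norm{w(\cdot,t)}_{L^\infty_x}^{1/4}\norm{w(\cdot,t)}_{\dot H^{s_p}_x}^{3/4}.
\end{equation}
Raising to the $2(p-1)$-th power, integrating in $t \in I_1$, bounding $L^{2(p-1)}(\Gamma_+(I_1))$ by $L^{2(p-1)}(I_1 \times \mathbb{R}^3)$ to drop the cone restriction in space, and inserting the two bounds above gives
\begin{equation}
\norm{w}_{L^{2(p-1)}(\Gamma_+(I_1))}^{2(p-1)} \lesssim \lvert I_1\rvert \cdot (EM^{\delta/2})^{p/3}\lvert I_2\rvert^{-1} \cdot L^{3(p-1)/2},
\end{equation}
and taking the $2(p-1)$-th root is the stated inequality.

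The only real obstacle is to pick the correct interpolation exponent: requiring exactly $L^{3/4}$ on the right-hand side forces the weight on the middle space to be $3/4$, which in turn forces the middle Lebesgue exponent to be $3(p-1)/2$, and this is precisely the Sobolev exponent produced by $\dot H^{s_p} \hookrightarrow L^{3(p-1)/2}$ in three dimensions, so all exponents align. I am implicitly replacing the sharper pointwise decay $(t_3-t)^{-2/(p-1)}$ by the crude lower bound $\lvert I_2\rvert^{-2/(p-1)}$, since the statement asks only for a polynomial factor $\lvert I_1\rvert/\lvert I_2\rvert$ (a logarithmic improvement would be available from the sharper decay but is not needed).
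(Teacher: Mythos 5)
Your proof is correct and follows essentially the same route as the paper: Hölder in time to pull out $|I_1|^{1/(2(p-1))}$, interpolation of $L^{2(p-1)}_x$ between $L^\infty_x$ (weight $1/4$) and $L^{3(p-1)/2}_x$ (weight $3/4$), the asymptotic stability estimate \eqref{eq:asymstab1} applied on $[t_3,t_+]$ giving $\dist(t,[t_3,t_+])\ge|I_2|$, and Sobolev $\dot H^{s_p}\hookrightarrow L^{3(p-1)/2}$ plus Strichartz/conservation along the free flow for the last factor. (The paper's display writes $u_{l,t_2}-u_{l,t_3}$ inside the $L^\infty$ factor, which appears to be a typo for $u_{l,t_3}-u_{l,t_+}$ — as written it is neither a valid Hölder step nor compatible with the bound $\dist(t,I)^{-2/(p-1)}$ for $t\in I_1$; your version is the consistent one.)
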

{\begin{proof}
We observe that the pair $(\infty, \frac{3}{2}(p-1))$ is wave-$s_p$-admissible, where we recall that $s_p:=1+ \frac{\delta}{2(p-1)}$ is the critical Sobolev regularity of \eqref{eq:nlw}. We estimate by H\"older 
\begin{align}
\norm{u_{l, t_3}- u_{l, t_+}}_{L^{2(p-1)}(\Gamma_+(I_1))} 
%&\lesssim \lvert I_1 \rvert^\frac{1}{2(p-1)} \norm{u_{l, t_3}- u_{l, t_+}}_{L^\infty L^{2(p-1)}(\Gamma_+(I_1))} \\
&\lesssim \lvert I_1 \rvert^\frac{1}{2(p-1)} \norm{u_{l, t_2}- u_{l, t_3}}_{L^\infty(\mathbb{R}^3 \times I_1)}^\frac{1}{4} \norm{u_{l,t_3}-u_{l, t_+}}_{L^\infty L^{\frac{3}{2}(p-1)}(\Gamma_+(I_1))}^\frac{3}{4} \, .
\end{align}
Observe that $v:= u_{l, t_3}- u_{l, t_+}$ solves $\Box v =0$ with $v(t_3)=u(t_3)- u_{l, t_+}(t_3)$. Hence by the Strichartz estimates \eqref{eq:mstrichartz} and \eqref{eq:lineareq} we have
\begin{align}
\norm{v}_{L^\infty L^{\frac{3}{2}(p-1)}(\Gamma_+(I_1))} &\lesssim \norm{(v, \partial_t v)(t_3)}_{(\dot{H}^{s_p} \times \dot{H}^{s_p-1})(\mathbb{R}^3)} \\
&\lesssim \norm{(u, \partial_t u)(t_3)}_{\dot{H}^{s_p} \times \dot{H}^{s_p-1}}+ \norm{(u_{l, t_+}, \partial_t u_{l, t_+})(t_3)}_{\dot{H}^{s_p} \times \dot{H}^{s_p-1}} \\
&\lesssim \norm{(u, \partial_t u)(t_3)}_{\dot{H}^{s_p} \times \dot{H}^{s_p-1}}+\norm{(u, \partial_t u)(t_+)}_{\dot{H}^{s_p} \times \dot{H}^{s_p-1}}  \\
&\lesssim \norm{(u, \partial_t u)}_{L^\infty(I, (\dot{H}^{s_p} \times \dot{H}^{s_p-1}))} \, .\qedhere
\end{align}
\end{proof}
}

\section{A reverse Sobolev inequality and mass concentration}\label{sec:rev-mass}
The section is devoted to prove that, if $u$ solves \eqref{eq:nlw}, then there exists a suitable ball with controlled size which contains an amount of $L^2$ norm, quantified in terms of $\|u\|_{L^{2(p-1)}}$ and $\|u\|_{H^{s}}$.
A key ingredient in the proof is the reverse Sobolev inequality of Tao, generalized for any $s\in (0,\frac 32)$. We present the proof for completeness, since the original argument used the fact that $p$ was integer.

\begin{prop}\label{prop:reverse-sob} Let $0<s <\frac 32$ and $\frac{1}{q}:=\frac{1}{2}-\frac{s}{3}$. Let  $f \in \dot{H}^s(\mathbb{R}^3)$% with $\norm{P_{\geq N} f}_{L^{q}(\mathbb{R}^3)} \gtrsim \eta$
. Then there exists $x\in \mathbb{R}^3$ and $0< r \leq \frac{2}{N}$ such that 
\begin{equation}\label{eqn:concentr-ts}
\left(\frac{1}{r^{2s}} \int_{B(x,r)} f^2 (y) \, \mathrm{d}y \right)^\frac{1}{2} \gtrsim \norm{P_{\geq N} f}_{L^{q}(\mathbb{R}^3)}^{\left(\frac{3}{2s}\right)^2} \norm{f}_{\dot{H}^s}^{1-\left(\frac{3}{2s}\right)^2 } .
\end{equation}
\end{prop}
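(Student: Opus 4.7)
Proof plan. This is a reverse-Sobolev inequality: the hypothesis that $\|P_{\geq N}f\|_{L^q}$ is substantial forces $f$ to concentrate in $L^2$ on a small ball. Tao's original argument in \cite{Tao2} (for $s=1$, $q=6$) exploited the integer exponent by expanding $|P_{\geq N}f|^6$ as a polynomial; since this is unavailable for general $s$, I would proceed instead through the convolution representation of $P_{\geq N}$ combined with an interpolation-type pointwise lower bound.

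Setting $A := \|P_{\geq N}f\|_{L^q}$ and $E := \|f\|_{\dot H^s}$, the first step produces a pointwise $L^\infty$ lower bound. The log-convexity $\|g\|_{L^q}^q \leq \|g\|_{L^\infty}^{q-2}\|g\|_{L^2}^{2}$ applied to $g=P_{\geq N}f$, together with $\|P_{\geq N}f\|_{L^2} \leq N^{-s}E$ and the identities $q/(q-2)=3/(2s)$, $2/(q-2)=(3-2s)/(2s)$ (both following from $1/q=1/2-s/3$), yields
\[
\|P_{\geq N}f\|_{L^\infty} \gtrsim T, \qquad T := A^{3/(2s)}\, N^{(3-2s)/2}\, E^{-(3-2s)/(2s)}.
\]
I then pick $x_0$ with $|P_{\geq N}f(x_0)| \geq T/2$.

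The second step converts this pointwise information into a local $L^2$ bound. Writing $P_{\geq N}f = \Phi_N * f$ with $\Phi_N(y)=N^3\Phi_1(Ny)$, $\Phi_1 \in \mathcal{S}(\R^3)$, Cauchy--Schwarz gives $|P_{\geq N}f(x_0)|^2 \leq \|\Phi_N\|_{L^1}\int|\Phi_N(x_0-y)|\,|f(y)|^2\,dy$, and a dyadic decomposition into shells $\{|y-x_0|\sim 2^k/N\}$ combined with the Schwartz bound $|\Phi_N(y)| \lesssim_L N^3(1+N|y|)^{-L}$ yields
\[
|P_{\geq N}f(x_0)|^2 \lesssim_L N^3 \sum_{k\geq 0} 2^{-Lk}\,\|f\|_{L^2(B(x_0,2^k/N))}^{2}.
\]
Using $\|f\|_{L^2(B_r)} \lesssim r^s E$ from Sobolev to estimate the tail $k\geq 2$ by $C_L N^{3-2s}E^{2}$, and choosing $L$ a large universal constant, in the regime $A \gtrsim E$ this tail is absorbed into $T^2/2$ and one obtains a ball $B(x_0,r)$ with $r \leq 2/N$ satisfying
\[
r^{-s}\|f\|_{L^2(B(x_0,r))} \gtrsim N^{(2s-3)/2}\,T = A^{3/(2s)}\, E^{-(3-2s)/(2s)}.
\]

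To match the claimed exponents, I use $A\lesssim E$ from Sobolev together with $(3/(2s))^2-1 > 3/(2s)-1 > 0$ for $s<3/2$. Writing both sides as $A\,(A/E)^{\alpha}$, the bound just obtained has $\alpha_1 = 3/(2s)-1$ while the target has $\alpha_2 = (3/(2s))^2-1 > \alpha_1$; since $A/E$ is universally bounded, the former dominates the latter up to a universal constant, giving
\[
A^{3/(2s)}\,E^{-(3-2s)/(2s)} \gtrsim A^{(3/(2s))^2}\,E^{1-(3/(2s))^2}.
\]
The main obstacle, relative to Tao's integer-$q$ argument, is the careful control of the tail in the dyadic sum while maintaining the constraint $r\leq 2/N$: the polynomial expansion of $|P_{\geq N}f|^q$ for integer $q$ short-circuits this step, whereas here one must rely on the Schwartz decay of the Littlewood--Paley kernel and treat separately the regime $A\ll E$, in which the claimed bound is itself very small and easily dominated by Sobolev alone.
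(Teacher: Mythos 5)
Your plan diverges from the paper's argument in a way that, unfortunately, creates an unfixable gap at its central step. The proposal avoids the paper's Littlewood--Paley block selection and instead interpolates directly on $g = P_{\geq N}f$, then converts the resulting $L^\infty$ lower bound to local $L^2$ concentration via the putative convolution formula $P_{\geq N}f = \Phi_N * f$ with $\Phi_N \in \mathcal{S}(\mathbb{R}^3)$. But $P_{\geq N}$ is a \emph{high-pass} projector: its Fourier symbol is $1 - \phi(\xi/N)$, which tends to $1$ at infinity, so the kernel of $P_{\geq N}$ is the tempered distribution $\delta_0 - \phi_N$, not a Schwartz function. Consequently, the Cauchy--Schwarz bound
\[
|P_{\geq N}f(x_0)|^2 \leq \|\Phi_N\|_{L^1}\int |\Phi_N(x_0-y)|\,|f(y)|^2\,dy
\]
and the subsequent dyadic decomposition have no meaning here: the singular part of the kernel would produce the uncontrolled (indeed, for $f\in \dot H^s$ with $s<3/2$, undefined) quantity $|f(x_0)|^2$. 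This is precisely the obstruction that forces one to single out an individual Littlewood--Paley block: $P_{\bar N}$, unlike $P_{\geq N}$, \emph{does} admit a Schwartz convolution kernel, and the paper's Step~2 rewrites $P_{\bar N}P_{\geq N}f = \tilde{P}_{\bar N}(f*\psi_{\bar N})$ with $\psi_{\bar N}$ an honest $L^2$ bump at scale $1/\bar N$, so that $\|P_{\bar N}P_{\geq N}f\|_{L^\infty} \lesssim \bar N^{3/2}\sup_x\|f\|_{L^2(B(x,1/\bar N))}$ follows cleanly.

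A second, related issue is the control of the radius. Even granting some surrogate kernel bound, your tail absorption $C_L N^{3-2s}E^2 \leq T^2/2$ unwinds to $(A/E)^{3/s}\gtrsim_L 1$, i.e.\ $A \gtrsim_L E$, which is a genuine restriction since in general $A = \|P_{\geq N}f\|_{L^q}$ can be arbitrarily small relative to $E=\|f\|_{\dot H^s}$ (take $f$ essentially supported at frequencies $\ll N$). In the complementary regime $A \ll E$, the bound you want on the tail forces the effective radius $r \sim 2^K/N$ to grow like $K \gtrsim_L \log(E/A)$, which destroys the required constraint $r \leq 2/N$; and it is not true that ``Sobolev alone'' produces a ball of radius $\leq 2/N$ carrying the needed $L^2$ mass, because $\dot H^s$ control gives no lower bound on local $L^2$ mass at a prescribed scale. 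The paper sidesteps both issues at once: Step~1 locates a single dyadic scale $\bar N$ with $\|P_{\geq N}f\|_{L^q}^{3/(2s)}\lesssim \|P_{\bar N}P_{\geq N}f\|_{L^q}$, and since $P_{\bar N}P_{\geq N}f=0$ for $\bar N < N/2$ this automatically forces $\bar N \geq N/2$, hence $r=1/\bar N \leq 2/N$ with no restriction on the ratio $A/E$. The Littlewood--Paley decomposition and the ``$M$ with $q/2\in(M-1,M]$'' device in the paper are not cosmetic generalizations of Tao's integer-$q$ expansion; they are what makes the frequency localization, and hence both the kernel representation and the radius constraint, work.

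Finally, a smaller remark on your last reduction: the inequality $A^{3/(2s)}E^{-(3-2s)/(2s)}\gtrsim A^{(3/(2s))^2}E^{1-(3/(2s))^2}$ indeed follows from $A\lesssim E$, but the implied constant is $(A/E)^{-((3/(2s))^2-3/(2s))}$ raised against a Sobolev constant, so it depends on $s$ and degenerates as $s\to 0$. This is acceptable for the proposition as stated, and incidentally explains why the paper keeps the sharper exponent $(3/(2s))^2$ rather than the weaker $3/(2s)$: the former is what actually comes out of the two-stage $L^q\to L^q$ (Step~1) and then $L^q \to L^\infty$ (Bernstein) argument.
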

\begin{proof}
By replacing $f$ with $\tilde{f}(x):= \frac{1}{\norm{f}}_{\dot{H}^s} f(x)$ we can assume w.l.o.g. that $\norm{f}_{\dot{H}^s}=1$. \\
\\
%Moreover, we write $q=6+\delta$ for some $\delta \in [0,2]$ and observe that $s$ and $\delta$ are related as follows
%\begin{equation}
%s = \frac{3(4+\delta)}{2(6+\delta)} \, .\end{equation}
\textit{Step 1: Let $g\in \dot{H}^s$ with $\norm{g}_{\dot{H}^s}\leq 1$. Then there exists $\bar N \in 2^\mathbb{Z}$ such that
\begin{equation}\label{eq:massconclwbLq}
\norm{g}_{L^q}^{\frac{3}{2s}} \lesssim \norm{P_{\bar N} g}_{L^q} \, ,
\end{equation}
and as a consequence
\begin{equation}\label{eq:massconclwbLinfty}
\norm{g}_{L^q}^{\left(\frac{3}{2s}\right)^2}{\bar N}^{\frac 3q} \lesssim \norm{P_{\bar N} g}_{L^\infty}  \, .
\end{equation}
}
From \eqref{eq:PaleyLittlewoodineq}, Plancherel's theorem
and the hypothesis $\norm{g}_{\dot{H}^s}\leq 1$, we infer that 
\begin{equation}\label{eq:massconcHsbound}
\sum_{N \in 2^\mathbb{Z}} N^{2s} \norm{P_N g}_{L^2}^2 \lesssim 1 \, .
\end{equation}
By interpolation, \eqref{eq:massconcHsbound} and the definition of $q$ we see that  \eqref{eq:massconclwbLinfty} is  a consequence of \eqref{eq:massconclwbLq}; indeed
\begin{equation}
\norm{P_{\bar N} g}_{L^q} \leq \norm{P_{\bar N} g}_{L^2}^\frac{2}{q} \norm{P_{\bar N} g}_{L^\infty}^{1-\frac{2}{q}} = {\bar N}^{-\frac{2s}{q}} \left({\bar N}^{2s}\norm{P_{\bar N} g}_{L^2}^2 \right)^\frac{1}{q} \norm{P_{\bar N} g}_{L^\infty}^{1-\frac{2}{q}}\lesssim {\bar N}^{-\frac{2s}{q}} \norm{P_{\bar N} g}_{L^\infty}^{\frac{2s}{3}} \, .
\end{equation}
%such that 
%\begin{equation}
%\norm{P_N g}_{L^q}^\frac{3}{2s} N^\frac{3}{q} \lesssim \norm{P_N g}_{L^\infty} \,.
%\end{equation}
 We are left to prove  \eqref{eq:massconclwbLq}. Let us fix $M \in \mathbb{N}$ big enough such that $\frac{q}{2} \in (M-1, M] \, $. With this choice of $M$, we ensure the subadditivity of the map $x \mapsto x^\frac{q}{2M}$. We then write using the hypothesis, \eqref{eq:PaleyLittlewoodineq}, the aforementioned subbadditivity, a reordering and H\"older 
\begin{align}\label{eqn:boh}
\norm{g}_{L^q}^q &\lesssim \int \Big(\sum_{M \in 2^\mathbb{Z}} \lvert P_M g (x)\rvert^2 \Big)^\frac{q}{2} \, \mathrm{d}x = \int \prod_{i=1}^M \Big( \sum_{N_i \in 2^\mathbb{Z}} \lvert P_{N_i} g (x) \rvert^2 \Big)^\frac{q}{2M} \, \mathrm{d}x \\
&\leq \int \prod_{i=1}^M  \sum_{N_i \in 2^\mathbb{Z}} \lvert P_{N_i} g (x) \rvert^\frac{q}{M}  \, \mathrm{d}x  \lesssim \sum_{\substack{N_1 \leq \dots \leq N_M}} \int  \prod_{i=1}^M  \lvert P_{N_i} g (x) \rvert^\frac{q}{M}  \, \mathrm{d}x  \\
&\lesssim \Big( \sup_{N \in 2^\mathbb{Z}} \norm{P_N g}_{L^q}\Big)^\frac{q(M-2)}{M} \sum_{\substack{N_1 \leq \dots \leq N_M}}  \left(\int \lvert P_{N_1} g(x) \rvert^\frac{q}{2}\lvert P_{N_M} g(x) \rvert^\frac{q}{2}  \, \mathrm{d}x \right)^\frac{2}{M} \, .
\end{align}
In all sums on $N_1 \leq \dots \leq N_M$, we intend that each $N_i$ belongs to $2^\mathbb{Z}$.
We claim that the second factor is bounded by a constant. Indeed, we estimate the last integral for fixed $N_1$ and $N_M$ using H\"older by 
\begin{align}
\left(\int \lvert P_{N_1} g(x) \rvert^\frac{q}{2}\lvert P_{N_M} g(x) \rvert^\frac{q}{2}  \, \mathrm{d}x \right)^\frac{2}{M} &\leq \left( \norm{P_{N_1} g }_{L^\infty}^\frac{M}{2} \int \lvert P_{N_1} g(x) \rvert^{\frac{q-M}{2}} \lvert P_{N_M} g(x) \rvert^\frac{q-M}{2} \lvert P_{N_M} g(x) \rvert^\frac{M}{2} \, \mathrm{d}x \right)^\frac{2}{M}  \\
&\leq \norm{P_{N_1} g }_{L^\infty}\norm{P_{N_1} g}_{L^q}^\frac{q-M}{M} \norm{P_{N_M} g}_{L^q}^\frac{q-M}{M}  \norm{ P_{N_M} g}_{L^\frac{q}{2}} \, .
\end{align}
By Bernstein's inequality \eqref{eq:Bernstein}  and the definition of $q$, we have that
\begin{equation}
\norm{P_{N_1} g }_{L^\infty} \norm{ P_{N_M} g}_{L^\frac{q}{2}}  \lesssim N_1^\frac{3}{2} N_M^{\frac{3}{2}-\frac{6}{q}} \norm{P_{N_1} g }_{L^2} \norm{ P_{N_M} g}_{L^2} = N_1^\frac{3}{2} N_M^{2s-\frac{3}{2}} \norm{P_{N_1} g }_{L^2} \norm{ P_{N_M} g}_{L^2} \, .
\end{equation}
Combining the three estimates, we deduce that 
\begin{align}\label{eq:blabla}
\norm{g}_{L^q}^q &\lesssim \Big( \sup_{N \in 2^\mathbb{Z}} \norm{P_N g}_{L^q}\Big)^{q-2}\sum_{\substack{N_1 \leq \dots \leq N_M }}  \norm{P_{N_1} g }_{L^\infty} \norm{ P_{N_M} g}_{L^\frac{q}{2}} \nonumber \\
& \lesssim
\Big( \sup_{N \in 2^\mathbb{Z}} \norm{P_N g}_{L^q}\Big)^{q-2}
 \sum_{\substack{N_1 \leq \dots \leq N_M }} N_1^{\frac{3}{2}-s} N_M^{s-\frac{3}{2}} \left( N_1^{2s}  \norm{P_{N_1} g }_{L^2}^2 + N_M^{2s} \norm{ P_{N_M} g}_{L^2}^2 \right) \, .
\end{align}
Let us consider the first addendum on the right-hand side (the second is handled analogously):
\begin{align}
 \sum_{N_1 \leq \dots \leq N_M } N_1^{\frac{3}{2}-s} N_M^{s-\frac{3}{2}}N_1^{2s}  \norm{P_{N_1} g }_{L^2}^2 
% &= \sum_{n_1 \in \mathbb{Z}} 2^{2n_1s} \norm{P_{2^{n_1}} g }_{L^2}^2 \sum_{\substack{n_1 \leq \dots \leq n_M \\ n_2, \dots, n_M \in \mathbb{Z}} } 2^{(\frac{3}{2}-s)(n_1-n_M)}
 &\leq \sum_{n_1 \in \mathbb{Z}} 2^{2n_1s} \norm{P_{2^{n_1}} g }_{L^2}^2 \sum_{n_M=n_1}^\infty  (n_M-n_1)^{M-2} 2^{-(\frac{3}{2}-s)(n_M-n_1)} \\
 %&\leq \sum_{n_1 \in \mathbb{Z}} 2^{2n_1s} \norm{P_{2^{n_1}} g }_{L^2}^2 \sum_{n=0}^\infty  n^{M-2} 2^{-(\frac{3}{2}-s)n} 
&\lesssim \sum_{n_1 \in \mathbb{Z}} 2^{2n_1s} \norm{P_{2^{n_1}} g }_{L^2}^2 \lesssim 1 \, , 
\end{align}
where we used that for fixed $s\in (0, \frac 32)$ the series $\norm{P_{2^{n_1}} g }_{L^2}^2 \sum_{n=0}^\infty  n^{M-2} 2^{-(\frac{3}{2}-s)n} $ converges for every $M \in \mathbb{N}\, $ as well as \eqref{eq:massconcHsbound}. We conclude from \eqref{eq:blabla} that
%\begin{equation}
$\norm{g}_{L^q}^\frac{3}{2s} = \norm{g}_{L^q}^\frac{q}{q-2} \lesssim \sup_{N \in 2^\mathbb{Z}} \norm{P_N}_{L^q} \, ,$
%\end{equation}
which implies \eqref{eq:massconclwbLq}.
\\
\\
\textit{Step 2: Let $\bar N, N \in 2^\mathbb{Z}$ and define $\psi_{\bar N}:= \bar N^3 \psi( \bar N x)$ where $\psi$ is a bump function supported in $B_1(0)$ whose Fourier transform has magnitude $\sim 1$ on $B_{100}(0)$. Then we can rewrite  
\begin{equation}
P_{\bar N} P_{\geq N} f = \tilde{P}_{\bar N}( f \ast \psi_{\bar N}) \, ,
\end{equation}
where $\tilde{P}_{\bar N}$ is a Fourier multiplier which is bounded on $L^\infty$.\\}
%we have by definition
%\begin{equation}
%\mathcal{F}(P_{\bar N} P_{\geq N}f)(\xi)= (\varphi(\xi/\bar N)- \varphi(2\xi/\bar N))(1-\varphi(\xi/N)) \hat{f}(\xi)
%\end{equation}
%and 
%\begin{equation}
%\mathcal{F}(\tilde{P}_{\bar N} f \ast \psi_{N})(\xi) = \mathcal{F}(\tilde{P}_{\bar N})(\xi) \mathcal{F}(\psi_{\bar N})(\xi)\hat{f}(\xi) =  \mathcal{F}(\tilde{P}_{\bar N})(\xi) \hat{ \psi}(\xi/\bar N)\hat{f}(\xi)  \, ,
%\end{equation} such that 
The claimed identity of Fourier multipliers follows by setting $\mathcal{F}(\tilde{P}_{\bar N})(\xi):= \Psi(\xi / \bar N)$, where 
\begin{equation}
\Psi(\xi):=(\varphi(\xi)- \varphi(2\xi))(1-\varphi( \xi \bar N/N )) \hat{ \psi}(\xi)^{-1} \,.
\end{equation}
To verify that $\tilde{P}_{\bar N}$ is bounded on $L^\infty$, for $g\in L^\infty$ we estimate by Young and a change of variables
\begin{equation}
\norm{\tilde{P}_{\bar N} g}_{L^\infty} \lesssim \norm{\mathcal{F}^{-1}(\Psi (\xi/\bar N))}_{L^1} \norm{g}_{L^\infty} = \norm{\mathcal{F}^{-1}(\Psi)}_{L^1} \norm{g}_{L^\infty} \, .
\end{equation}
Observe that $\Psi \in C_c^\infty(\mathbb{R}^3) \subseteq \mathcal{S}(\mathbb{R}^3)$, so that $\norm{\mathcal{F}^{-1}(\Psi)}_{L^1} < + \infty \, .$
\\
\\
\textit{Step 3: Conclusion of the proof.\\}
We apply Step 1 to $g= P_{\geq N} f$ to deduce that there exist $\bar N \in 2^\mathbb{Z}$ such that 
\begin{equation}
\norm{P_{\geq N} f}_{L^q}^{\left(\frac{3}{2s}\right)^2} \bar N^\frac{3}{q} \lesssim \norm{P_{\bar N} P_{\geq N} f}_{L^\infty} \, .
\end{equation}
We observe that $\bar N \geq \frac{N}{2}$ because otherwise $P_{\bar N} P_{\geq N}f =0\,.$ By Step 2, we deduce that there exists  $x\in \mathbb{R}^3$ such that 
\begin{equation}
%\lvert \psi_{\bar N} \ast f(x) \rvert \gtrsim  
\norm{P_{\geq N} f}_{L^q}^{\left(\frac{3}{2s}\right)^2} { \bar N}^{ \frac 3q}
\lesssim%\end{equation}
%On the other hand, by the definition of $\psi_{\bar N}$ and H\"older it holds
%\begin{equation}
\lvert \psi_{\bar N} \ast f(x) \rvert 
%=\bar  N^3 \left \lvert \int_{B(x, 1/\bar N)} f(y)  \psi(\bar N(x-y)) \, \mathrm{d}y \right \rvert
 \leq \bar N^{\frac 3 2}\Big( \int_{B(x, 1/\bar N)} f^2(y) \, \mathrm{d}y \Big)^\frac{1}{2} \norm{ \psi}_{L^2} \, .
\end{equation}
 Combining the two inequalities, we obtain 
%\begin{equation}
%\left(\int_{B(x, 1/\bar N)} f^2(y) \, \mathrm{d}y \right)^\frac{1}{2} \gtrsim \eta^{\left(\frac{3}{2s}\right)^2} \bar N^{\frac 3 2(\frac 2 q -1)} = \eta^{\left(\frac{3}{2s}\right)^2} \bar N^{-s}  \, ,
%\end{equation} 
the claimed inequality \eqref{eqn:concentr-ts} with $r:= \frac{1}{\bar N} \in (0, \frac{2}{N}]$.
\end{proof}

The proposition above will be applied with $s=s_p$;
% and since this quantity will appear often in the rest of the paper, we set
%$$L(u) : = \norm{u}_{L^\infty \dot H^{s_p}}.
%$$
the choice of $s \neq 1$ is in turn fundamental in the main theorem, since it allows to give an upper bound on the $r_0$ given by the mass concentration only in terms of $E, M, \|u \|_{L^\infty \dot H^{s_p}}$.
%\subsection{Mass concentration (Roy's Lemma 8)}
\begin{lemma}[Mass concentration]\label{lem:massconc} Let $p=5+ \delta$ for $\delta \in (0,1)$ and let $0< \eta \leq1$. Assume $\norm{u}_{L^{2(p-1)}(\mathbb{R}^3 \times I)} \geq \eta$ and $\norm{u}_{L^\infty(\mathbb{R}^3 \times I)} \leq M$. Then, for any $1 \leq s \leq s_p:= 1+\frac{\delta}{2(p-1)}$ there exists $(x,t) \in \mathbb{R}^3 \times I$ and $r>0$ such that 
\begin{equation}\label{eq:massconcest}
\frac{1}{r^{2s}} \int_{B(x,r)} u^2(y, t) \, \mathrm{d}y \gtrsim \norm{u}_{L^\infty(I, \dot{H}^{s_p}(\mathbb{R}^3))}^{-\alpha_0} (M^{\frac \delta 2} E)^{-\alpha_1}  M^{-(s_p-s)(p-1)} \eta^{\alpha_2}  \,,
\end{equation}
where $\alpha_i=\alpha_i(s) \geq0$ are defined as $\alpha_0:=(\gamma-2)\frac{s-1}{s_p-1}$, $\alpha_1:=\frac{3}{10} \gamma (3-2s) + \frac{\gamma-2}{2}\frac{s_p-s}{s_p-1}$ and $\alpha_2:=\frac{3-2s}{5}2(p-1)\gamma$ for $\gamma := \frac{9}{2s^2}\, .$ Moreover, 
\begin{equation}\label{eq:massconcestinterval}
\lvert I \rvert \gtrsim \eta^{2(p-1)} \norm{u}_{L^\infty(I, \dot{H}^{s_p}(\mathbb{R}^3))}^{-\alpha_0'} (EM^\frac{\delta}{2})^{-\alpha_1'} M^\frac{(s-1)(p-1)}{2} r^s  \, ,
\end{equation}
where $\alpha_i'(s) \geq 0$ are defined as $\alpha_0':=2(p-1)-\frac{(s-1)(p-1)(p+1)}{\delta}$ and $\alpha_1':=\frac{(s-1)(p-1)}{\delta}\, .$
\end{lemma}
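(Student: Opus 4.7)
The plan is to apply Proposition~\ref{prop:reverse-sob} to the spatial slice $u(t^\ast,\cdot)$ at a suitable time $t^\ast\in I$ and frequency cutoff $N$. First, I would select $t^\ast$ via an averaging argument: since $\|u\|_{L^{2(p-1)}(\mathbb{R}^3\times I)}^{2(p-1)}\geq \eta^{2(p-1)}$, there exists $t^\ast\in I$ with $\|u(t^\ast)\|_{L^{2(p-1)}(\mathbb{R}^3)}^{2(p-1)}\geq \eta^{2(p-1)}/|I|$. Setting $q:=6/(3-2s)$ (so that $\dot H^s\hookrightarrow L^q$ matches the Sobolev pair of Proposition~\ref{prop:reverse-sob}) and using the $L^\infty$ bound, interpolation between $L^q$ and $L^\infty$ yields
\begin{equation*}
\|u(t^\ast)\|_{L^q}^q\gtrsim \eta^{2(p-1)}|I|^{-1}M^{-(2(p-1)-q)}.
\end{equation*}

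Second, I would apply Proposition~\ref{prop:reverse-sob} to $u(t^\ast)$ at a frequency $N$ chosen so that $\|P_{\geq N}u(t^\ast)\|_{L^q}\geq \tfrac12\|u(t^\ast)\|_{L^q}$. Quantifying this requires upper bounds on $\|P_{<N}u(t^\ast)\|_{L^q}$. I would combine the energy-based Bernstein bound $\|P_M u\|_{L^q}\lesssim M^{s-1}(2E)^{1/2}$ with the critical-norm bound $\|P_M u\|_{L^q}\lesssim M^{s-s_p}\|u\|_{L^\infty\dot H^{s_p}}$, splitting at the crossover scale $M_\ast:=(\|u\|_{L^\infty\dot H^{s_p}}/E^{1/2})^{1/(s_p-1)}$ and summing dyadically over $M<N$. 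The resulting cutoff $N$, expressed in terms of $\|u(t^\ast)\|_{L^q}$, $E$, and $\|u\|_{L^\infty\dot H^{s_p}}$, determines the upper bound $r\leq 2/N$ on the radius of the concentration output by Proposition~\ref{prop:reverse-sob}, which takes the form
\begin{equation*}
\frac{1}{r^{2s}}\int_{B(x,r)}u^2(y,t^\ast)\,dy\gtrsim \|u(t^\ast)\|_{L^q}^{\gamma}\|u(t^\ast)\|_{\dot H^s}^{2-\gamma},\qquad \gamma=\frac{9}{2s^2}.
\end{equation*}

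Finally I would interpolate $\|u(t^\ast)\|_{\dot H^s}$ between $\dot H^1$ and $\dot H^{s_p}$, namely
\begin{equation*}
\|u(t^\ast)\|_{\dot H^s}\lesssim E^{(s_p-s)/(2(s_p-1))}\|u\|_{L^\infty\dot H^{s_p}}^{(s-1)/(s_p-1)},
\end{equation*}
and, since $2-\gamma<0$, convert this into a lower bound on $\|u(t^\ast)\|_{\dot H^s}^{2-\gamma}$, which accounts for the $\|u\|_{L^\infty\dot H^{s_p}}^{-\alpha_0}$ and the $((\gamma-2)/2)(s_p-s)/(s_p-1)$-piece of $(EM^{\delta/2})^{-\alpha_1}$ appearing in \eqref{eq:massconcest}. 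Substituting the lower bound on $\|u(t^\ast)\|_{L^q}^\gamma$ from the first step and eliminating $|I|$ via the $N$-to-$r$ relation should yield the concentration bound \eqref{eq:massconcest}, while the same relation read in the opposite direction---comparing the implicit upper bound on $\|u(t^\ast)\|_{L^q}$ in terms of $r$, $E$, and $\|u\|_{L^\infty\dot H^{s_p}}$ coming from $r\leq 2/N$ with the averaging lower bound---isolates $|I|$ in terms of $r$ and produces the length bound \eqref{eq:massconcestinterval}. The hard part will be the bookkeeping in the two-scale Bernstein sum for the choice of $N$: at $s=1$ the $\dot H^1$-contribution alone is log-divergent in $N$, forcing the $\dot H^{s_p}$-piece to carry the scale dependence and producing the $\|u\|_{L^\infty\dot H^{s_p}}^{-\alpha_0'}$ factor in \eqref{eq:massconcestinterval}, and threading the exact exponents through the interpolation of $\|u(t^\ast)\|_{\dot H^s}$ and the $N$-to-$r$ relation is what generates the somewhat elaborate $\alpha_i,\alpha_i'$ in the statement.
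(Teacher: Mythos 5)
Your reduction of the lemma to Proposition~\ref{prop:reverse-sob} applied at a fixed time $t^\ast$ is the right starting point, but the way you feed the $L^{2(p-1)}$ input into it introduces an $|I|$-dependence that cannot then be removed, and this breaks the first conclusion. Concretely: after averaging you get
\begin{equation}
\norm{u(t^\ast)}_{L^q}^q \gtrsim \eta^{2(p-1)}\,\lvert I\rvert^{-1} M^{-(2(p-1)-q)},
\end{equation}
and plugging this into Proposition~\ref{prop:reverse-sob} yields a concentration bound that carries a factor $\lvert I\rvert^{-\gamma/q}$. Your proposed ``elimination of $\lvert I\rvert$ via the $N$-to-$r$ relation'' does not repair this: the balancing choice of $N$ only gives the one-sided bound $\norm{u(t^\ast)}_{L^q}\lesssim E^{1/2} r^{-(s_p-1)}$ (an \emph{upper} bound), which, combined with the averaging lower bound, produces the interval estimate \eqref{eq:massconcestinterval} but cannot legitimately be substituted into $\lvert I\rvert^{-\gamma/q}$---that substitution would flip the inequality. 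The target estimate \eqref{eq:massconcest} is genuinely independent of $\lvert I\rvert$ (this is essential for the later application in Lemma~\ref{lem:chain}, where the chain length is precisely what one is trying to bound), so the two conclusions are not symmetric: you cannot feed one into the other.

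What the paper does differently, and what you are missing, is that the lower bound on the $L^\infty_t L^q_x$ input to the reverse Sobolev inequality is obtained \emph{without any time slicing or averaging}, by a purely spacetime interpolation using Strichartz norms controlled by the energy. Explicitly: first a Bernstein step at a frequency $N$ chosen so that $\norm{P_{\geq N}u}_{L^{2(p-1)}(\R^3\times I)}\gtrsim\eta$, and then
\begin{equation}
\eta^{2(p-1)} \lesssim \norm{P_{\geq N}u}_{L^{2(p-1)}_{t,x}}^{2(p-1)} \leq \norm{P_{\geq N}u}_{L^\infty_{t,x}}^{\ldots}\,\norm{P_{\geq N}u}_{L^3 L^{18}}^3\,\norm{P_{\geq N}u}_{L^\infty L^q}^{5q/6},
\end{equation}
where $\norm{u}_{L^3 L^{18}}\lesssim E^{1/2}$ by Strichartz. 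This isolates a lower bound on $\norm{P_{\geq N}u}_{L^\infty L^q}$ depending only on $\eta,E,M$ (no $\lvert I\rvert$). Only \emph{after} this is Proposition~\ref{prop:reverse-sob} applied, at a near-supremum time, and only the \emph{choice} of $N$ (hence of $r$) encodes $\lvert I\rvert$, producing \eqref{eq:massconcestinterval}. Your two-scale Bernstein for bounding $\norm{P_{<N}u(t^\ast)}_{L^q}$ is not wrong per se, but it is solving the wrong problem: the crossover structure you describe (and the log-divergence you flag) replaces what in the paper is a single Bernstein step in $L^\infty_t L^r_x$ combined with an interpolation between $L^\infty L^{3(p-1)/2}$ (controlled by $\norm{u}_{L^\infty\dot H^{s_p}}$) and $L^\infty L^{p+1}$ (controlled by $E$). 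Your final interpolation of $\norm{u(t^\ast)}_{\dot H^s}$ between $\dot H^1$ and $\dot H^{s_p}$ is correct and does match the paper's last step.
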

%\begin{remark} The mass concentration which is established in \eqref{eq:massconcest} is invariant under the natural scaling associated to \eqref{eq:nlw}. Moreover, 
%Observe that since $s\in [1, s_p]$ we have that the exponent
%\begin{align}
%\alpha_1(s) &\geq  \frac{3}{10} \gamma(3-2s) >0\, .
%\end{align}
%\end{remark}

\begin{proof} Fix $1 \leq s \leq s_p= 1+\frac{\delta}{2(p-1)}$ and set $\frac{1}{q}:=\frac{1}{2}-\frac{s}{3}$, the conjugate Sobolev exponent. %We also define $q^*:= \frac{1}{2}-\frac{s_p}{3}=\frac{3(p-1)}{2}\, .$
 By shrinking $I$, we can always assume that $\norm{u}_{L^{2(p-1)}(\mathbb{R}^3 \times I)} = \eta$. Recalling the proof of Lemma \ref{lem:lowerboundpotential}, we have that for any $(q,r)$ wave-1-admissible 
\begin{equation}\label{eq:wave1boundedbyenergy}
\norm{u}_{L^q L^r} \lesssim E^\frac{1}{2}\, .
\end{equation}
\textit{Step 1: We find a frequency scale $N \in 2^\mathbb{Z}$ where $\norm{P_{\geq N}f}_{L^{2(p-1)}(\mathbb{R}^3 \times I)} \gtrsim \eta$.\\}
By H\"older and Bernstein \eqref{eq:Bernstein} with exponents $2(p-1)$ and  $\frac{6(p-1)}{s+3} \in [6, q^\ast]$ we estimate
\begin{equation}
\norm{P_{<N} u}_{L^{2(p-1)}}\lesssim \lvert I \rvert^\frac{1}{2(p-1)} \norm{P_{<N} u}_{L^\infty L^{2(p-1)}}%^{2(p-1)}
 \lesssim \lvert I \rvert^\frac{1}{2(p-1)} N^\frac{s}{2(p-1)}\norm{u}_{L^\infty L^\frac{6(p-1)}{s+3}} \,.
\end{equation}
We observe that by interpolation and the Sobolev embedding of $\dot H^{s_p} \hookrightarrow L^\frac{3(p-1)}{2}$ that
\begin{align}
\norm{u}_{L^\infty L^\frac{6(p-1)}{s+3}} &\leq \norm{u}_{L^\infty \frac{3(p-1)}{2}}^{1-\frac{(s-1)(p+1)}{2\delta}} \norm{u}_{L^\infty L^{p+1}}^\frac{(s-1)(p+1)}{2\delta} 
%\lesssim \norm{u}_{L^\infty \dot H^{s_p}}^{1-\frac{(s-1)(p+1)}{2\delta}} E^\frac{s-1}{2\delta} \\
\lesssim \norm{u}_{L^\infty \dot H^{s_p}}^{1-\frac{(s-1)(p+1)}{2\delta}}  (E M^\frac{\delta}{2})^\frac{(s-1)}{2\delta} M^{-\frac{(s-1)}{4}} \, .
\end{align}
Thus if we choose the frequency scale $N\in 2^\mathbb{Z}$ such that 
\begin{equation}\label{eq: lowerboundI}
\lvert I \rvert^\frac{1}{2(p-1)} N^\frac{s}{2(p-1)} \norm{u}_{L^\infty \dot H^{s_p}}^{1-\frac{(s-1)(p+1)}{2\delta}}  (E M^\frac{\delta}{2})^\frac{(s-1)}{2\delta} M^{-\frac{(s-1)}{4}}= c\eta
\end{equation}
for a universal small constant $0<c<< 1$, we can ensure that $\norm{P_{\geq N} u}_{L^{2(p-1)}(\mathbb{R}^3 \times I)} \gtrsim \eta \, $. \\ 
\\
\textit{Step 2: We deduce a lower bound of $\norm{P_{\geq N} u}_{L^\infty(I, L^q(\mathbb{R}^3))}$ in terms of $\eta, E, M$. \\}
Observe that the pair $(3, 18)$ is wave-1-admissible and that $(3, 18)$ and $(\infty, q)$ interpolate to $(\frac{5}{6}q+3, \frac56 q+3)$. Using \eqref{eq:wave1boundedbyenergy} and \eqref{eq: lowerboundI}, we have by H\"older 
\begin{align}
\eta^{2(p-1)} &\lesssim \norm{P_{\geq N} u}_{L^{2(p-1)}_{t,x}}^{2(p-1)}  \lesssim \norm{P_{\geq N}u}_{L^{\infty}_{t,x}}^{2(p-1)-(\frac56 q+3)} \norm{P_{\geq N}u}_{L^{\frac56 q+3}_{t,x}}^{\frac56 q+3} \\
&\lesssim M^{5+ 2 \delta-\frac56 q}   \norm{P_{\geq N}u}_{L^3 L^{18}}^3 \norm{P_{\geq N}u}_{L^\infty L^q}^{\frac{5}{6}q}\\
&\lesssim M^{\frac{5}{6}q (\frac{6}{q} +\frac{3}{2q} \delta -1)} (M^\frac{\delta}{2} E)^\frac{3}{2} \norm{P_{\geq N}u}_{L^{\infty}L^q}^{\frac{5}{6}q} \, ,
\end{align}
hence after some easy algebraic manipulations
\begin{align}
\norm{P_{\geq N} u}_{L^\infty L^q} &\gtrsim \eta^{\frac{12}{5q}(p-1)} (M^{\frac \delta 2} E)^{-\frac{9}{5q}} M^{-(\frac{6}{q} +\frac{3}{2q} \delta -1)} \\
%&\gtrsim \eta^{\frac{12}{5q}(p-1)} (M^{\frac \delta 2} E)^{-\frac{9}{5q}} M^{-(\frac{6}{q} +\frac{3}{2q} \delta -1)} \\
&= \eta^{\frac{(3-2s)}{5}2(p-1)} (M^\frac{\delta}{2} E)^{-\frac{3}{10}(3-2s)} M^{-\frac12(s_p-s)(p-1)}
   \, .
\end{align}
\\
\textit{Step 3:} We apply the reverse Sobolev of Proposition~\ref{prop:reverse-sob} to conclude that there exists $(x,t) \in \mathbb{R}^3 \times I$ and $0< r \leq \frac 2 N$ such that
\begin{equation}\label{eq:massconcHs}
\frac{1}{r^{2s}} \int_{B(x,r)} u^2(y, t) \, \mathrm{d}y \gtrsim \norm{u}_{L^\infty(I, \dot{H}^s(\mathbb{R}^3))}^{2-\gamma} \left( \eta^{\frac{(3-2s)}{5}2(p-1)} (M^\frac{\delta}{2} E)^{-\frac{3}{10}(3-2s)} M^{-\frac12(s_p-s)(p-1)}\right)^\gamma \,,
\end{equation}
where $\gamma := \frac{9}{2s^2}\,.$ Moreover from \eqref{eq: lowerboundI} we get 
\begin{equation}
\lvert I \rvert = \frac{(c \eta)^{2(p-1)} M^\frac{(s-1)(p-1)}{2}}{\norm{u}_{L^\infty \dot H^{s_p}}^{2(p-1)-\frac{(s-1)(p-1)(p+1)}{\delta}}(EM^\frac{\delta}{2})^\frac{(s-1)(p-1)}{\delta} N^s} \gtrsim \eta^{2(p-1)}\frac{M^\frac{(s-1)(p-1)}{2}}{\norm{u}_{L^\infty \dot H^{s_p}}^{2(p-1)-\frac{(s-1)(p-1)(p+1)}{\delta}}(EM^\frac{\delta}{2})^\frac{(s-1)(p-1)}{\delta}} r^s \,.
\end{equation}
We now rewrite \eqref{eq:massconcHs}: By interpolation and energy conservation,
\begin{equation}
\norm{u}_{L^\infty \dot{H}^s} \leq E^\frac{(s_p-s)(p-1)}{\delta} \norm{u}_{L^\infty \dot{H}^{s_p}}^\frac{2(s-1)(p-1)}{\delta} \, .
\end{equation}
Observe that $\gamma \geq 2$ for $s \in (0, \frac 32)$. Thus we have that 
\begin{align}
\norm{u}_{L^\infty \dot{H}^s}^{2-\gamma} 
%&\gtrsim E^\frac{(s_p-s)(p-1)(2-\gamma)}{\delta} \norm{u}_{L^\infty  \dot{H}^{s_p}}^\frac{2(s-1)(p-1)(2-\gamma)}{\delta} \\
\gtrsim (M^\frac{\delta}{2} E)^\frac{(s_p-s)(p-1)(2-\gamma)}{\delta} \norm{u}_{L^\infty \dot{H}^{s_p}}^\frac{2(s-1)(p-1)(2-\gamma)}{\delta}    M^{\frac{(s_p-s)(p-1)(\gamma-2)}{2}} \,,
\end{align}
so that 
\begin{equation}
\frac{1}{r^{2s}} \int_{B(x,r)} u^2(y, t) \, \mathrm{d}y \gtrsim \norm{u}_{L^\infty \dot{H}^{s_p}}^{-(\gamma-2)\frac{s-1}{s_p-1}} (M^{\frac \delta 2} E)^{- \left[\frac{3}{10} \gamma (3-2s) + \frac{\gamma-2}{2}\frac{s_p-s}{s_p-1} \right]}  M^{-(s_p-s)(p-1)} \eta^{\frac{3-2s}{5}2(p-1)\gamma} \,.
\end{equation}
\end{proof}

\begin{remark}[Optimization of exponents on $\eta$, $\norm{u}_{L^\infty \dot H^{s_p}}$ and $EM^\frac{\delta}{2}$] Whilst the free powers of $M$ in \eqref{eq:massconcest} and \eqref{eq:massconcestinterval} are fixed by scaling, the other powers come from interpolation and can be optimized. Since we are not aiming at an optimal double exponential bound, we can take in Step 2 of the proof of Lemma \ref{lem:massconc} any Strichartz-1-pair $(q',r')$ (here: $(3,18)$) such that $(\infty, q)$ and $(q',r')$ interpolate to $(\tilde{r}, \tilde{r})$ with $\tilde{r} \leq 2(p-1)$.
Alternatively, to optimize the exponents $\alpha_1$ and $\alpha_2$, we first suppose that the endpoint $(2, \infty)$ was Strichartz-1-admissible, interpolate in Step 2 between $(2, \infty)$ and $(\infty, q)$ and conclude in Step 3 as before. We then approximate $(2, \infty)$ by wave-1-admissible pairs $(2+\epsilon, \frac{6(2+\epsilon)}{\epsilon})$. Letting $\epsilon \to 0$, we can reach in this way
$\alpha_1(s)=\frac{3-2s}{6} (\gamma+)+ \frac{\gamma-2}{2} \frac{s_p-s}{s_p-1}$ 
and $\alpha_2(s)=\frac{3-2s}{3}(p-1)+.
$%\end{align*}

 In the very same way, the free exponents in Lemma \ref{lem:lowerboundpotential} can be optimized. Proceeding in this way, we would obtain the lower bound: 
\begin{equation}
\norm{u}_{L^\infty L^{p+1}}^{p+1} \gtrsim \eta^{2(p-1)+} (EM^\frac{\delta}{2})^{-(1+)} M^{-\frac{\delta}{2}} \, .
\end{equation}

%Indeed, interpolation between $(2+ \epsilon, \frac{6(2+\epsilon)}{\epsilon})$ and $(\infty, p+1)$ gives, with $\theta(\epsilon):=(p+1)\left[\frac{6-\epsilon}{6(p+3)- \epsilon(p-5)} \right]$, that
%\begin{equation}
%\norm{u}_{L^{p+3+ \frac{8\epsilon}{6-\epsilon}}} \leq \norm{u}_{L^{2+\epsilon} L^\frac{6(2+\epsilon)}{\epsilon}}^{1-\theta(\epsilon)} \norm{u}_{L^\infty L^{p+1}}^{\theta(\epsilon)} \lesssim E^\frac{1-\theta(\epsilon)}{2} \norm{u}_{L^\infty L^{p+1}}^{\theta(\epsilon)}
%\end{equation}
%and for $\epsilon>0$ small enough, we have that $p+3+ \frac{8\epsilon}{6-\epsilon} \leq 2(p-1)$ such that 
%\begin{equation}
%\eta= \norm{u}_{L^{2(p-1), 2(p-1)}} \leq \norm{u}_{L^{p+3+ \frac{8\epsilon}{6-\epsilon}}}^{\frac{p+3+ \frac{8\epsilon}{6-\epsilon}}{2(p-1)}} M^{\frac{(p-5)-\frac{8\epsilon}{6-\epsilon}}{2(p-1)}} \lesssim E^\frac{(1-\theta(\epsilon)) \left[ p+3+\frac{8\epsilon}{6-\epsilon} \right]}{4(p-1)}  M^{\frac{(p-5)-\frac{8\epsilon}{6-\epsilon}}{2(p-1)}} \norm{u}_{L^\infty L^{p+1}}^{\frac{\theta(\epsilon)\left[ p+3+\frac{8\epsilon}{6-\epsilon}\right]}{2(p-1)}} \, .
%\end{equation}
%Then we let $\epsilon \to 0$.
\end{remark}

\section{Proof of Theorem \ref{thm:spacetime} and Corollary \ref{cor:spacetime}}
%In the spherically symmetric case, Morawetz gives the global space-time bound 
%\begin{equation}\label{eq:morawetz}
%\int_I \int \lvert u \rvert^{p+3} \, \mathrm{d}x \, \mathrm{d}\tau \lesssim E^2
%\end{equation}
%for sufficiently regular solutions of \eqref{eq:nlw}%, as long as the latter exists
%. \eqref{eq:morawetz} can be viewed as long-time estimate, at least in the critical case where we have global regularity. The following result should replace this long-time estimate when we do not have any symmetry assumptions, to prove Theorem~\ref{thm:main}.
%\\
%\\
%\textit{Notation.} To shorten notation, w
%We introduce 
%$$L(u):=\norm{(u, \partial_t u)}_{L^\infty(J, (\dot{H}^{s_p} \times \dot{H}^{{s_p}-1})(\mathbb{R}^3))}\, .$$
%TO DO: LET'S JUST USE THE $E(u)$... WE DON'T INTRODUCE M(u)? ALWAYS UPPER BOUND? When unambiguous, we write $E$ instead of $E(u)$.
%
%REPLACE P WITH EXPLICIT VALUE

\begin{proof}[Proof of Theorem \ref{thm:spacetime}] Let $p=5+\delta$ with $\delta \in (0, 1)$, $J=[t_-, t_+]$ and consider a solution $(u, \partial_t u) \in L^\infty (J, ((\dot{H}^1\cap \dot{H}^2) \times H^1)(\mathbb{R}^3))$ to \eqref{eq:nlw} as in the statement. 
%obeying the bounds
%\begin{align}
%&\norm{u}_{L^\infty(\mathbb{R}^3 \times J)} \leq M \,,\\
%&L(u)=\norm{(u, \partial_t u)}_{L^\infty(J, \dot{H}^{s_p} \times \dot{H}^{s_p-1}(\mathbb{R}^3))} \leq L\, .
%\end{align}
If either $E M^\frac{\delta}{2} < c_0$ or $L < c_0$, then we conclude by Lemma \ref{lem:smallenergy} that $\norm{u}_{L^{2(p-1)}(\mathbb{R}^3 \times J)} \leq 1 \, .$ For the rest of the argument, we thus may assume the lower bound
\begin{equation}\label{eq:lwbofEL}
\min \{EM^\frac{\delta}{2}, L \} \geq c_0 \, ,
\end{equation}
where $c_0>0$ is the universal constant given by Lemma \ref{lem:smallenergy}. 

Let $C>2c_0^{-2}$ be a universal constant that will be fixed at the end of the proof. The inequality imposed on $C$ guarantees that $CLEM^{\delta/2}>2$.

Moreover, we may assume w.l.o.g. that $\norm{u}_{L^{2(p-1)}(\mathbb{R}^3 \times J)} \geq 1$. We  then split $J$ into subintervals $J_1, \dots, J_l$ such that 
\begin{itemize}
\item $\norm{u}_{L^{2(p-1)}(\mathbb{R}^3 \times J_i)}=1$ for $i=1, \dots, l-1 \,,$ 
\item $\norm{u}_{L^{2(p-1)}(\mathbb{R}^3 \times J_l)}\leq1 \, .$
\end{itemize}
We call $J_i$ exceptional if 
\begin{equation}
\norm{u_{l, t_+}}_{L^{2(p-1)}(\mathbb{R}^3 \times J_i)} + \norm{u_{l, t_-}}_{L^{2(p-1)}(\mathbb{R}^3 \times J_i)} \geq B_{exc}^{-1} \,,
\end{equation}
for some $ B_{exc}\geq 1$ yet to be defined. We have by Strichartz estimates \eqref{eq:mstrichartz} that 
\begin{equation}
\norm{u_{l, t_+}}_{L^{2(p-1)}(\mathbb{R}^3 \times J)},\norm{u_{l, t_-}}_{L^{2(p-1)}(\mathbb{R}^3 \times J)} \lesssim L \, .
\end{equation}
In particular, $J$ cannot consist of too many exceptional intervals. More precisely, calling the number of exceptional intervals $N_{exc}:= \lvert \{ i \in \{1, \dots, l \}: J_i \text{ exceptional} \} \rvert $, we have the bound
\begin{equation}\label{eq:numberofexc}
N_{exc} \lesssim L B_{exc} \, .
\end{equation}
Between two exceptional intervals there can lie a chain $K=J_{i_0} \cup \dots \cup J_{i_1}$ of unexceptional intervals. However, since a chain $K$ of unexceptional intervals has to be confined between two exceptional intervals (or one of its endpoints is $t_-$ or $t_+$), the number of chains of unexceptional intervals $N_{chain}$ is comparable to $N_{exc}$, that is 
\begin{equation}
N_{chain} \lesssim N_{exc} \, .
\end{equation}
For a chain $K=J_{i_0} \cup \dots \cup J_{i_1}$ of unexceptional intervals, we define $N(K):= i_1+1-i_0$ to be the number of intervals it is made of. Summarizing, we have that 
\begin{equation}
\norm{u}_{L^{2(p-1) (\mathbb{R}^3 \times J)}}^{2(p-1)} \leq N_{exc} + N_{chain} \sup_{K} N(K)  \lesssim L B_{exc} (1+  \sup_{K} N(K) ) \, .
\end{equation}
The proof is thus concluded with the following lemma and with the choice of $B_{exc}$ in \eqref{eqn:Bexc-def} below.
\end{proof}

\begin{lemma}\label{lem:chain} There exists a universal constant $C \geq 1$ such that the following holds.

 Consider a solution $(u, \partial_t u) \in L^\infty (J, (\dot H^1 \cap \dot H^2 \times H^1)(\mathbb{R}^3))$ of \eqref{eq:nlw} with $p=5+\delta$, $\delta \in (0,1)$. Define $M:=\norm{u}_{L^\infty(\mathbb{R}^3 \times J)}, \,E:=E(u)$ and $L:=\norm{(u, \partial_t u)}_{L^\infty(J, (\dot{H}^{s_p} \times \dot{H}^{{s_p}-1})(\mathbb{R}^3))}$ on $J=[t_-, t_+]$ and set 
\begin{equation}\label{eqn:Bexc-def}
B_{exc}:=\left( CE M^\frac{\delta}{2} L \right)^{C (EM^\frac{\delta}{2} L)^{176}} \, .
\end{equation}
Assume that $B_{exc}^{\frac \delta 2} \leq 2$
%\begin{equation}\label{eq:smallnessdeltalemma}
%\left((C EM^\frac{\delta}{2} L)^{C (EM^\frac{\delta}{2}L)^{176}}\right)^\frac{\delta}{2} \leq 2 \, .
%\end{equation}
and that
\begin{equation}\label{eq:hyplargeenergy}
\min \{E M^\frac{\delta}{2}, L\} \geq c_0 \, .
\end{equation}
Then for any chain of unexceptional intervals, that is for any $K=J_{i_0} \cup \dots \cup J_{i_1} \subseteq J$ with 
\begin{align}
&\norm{u}_{L^{2(p-1)}(\mathbb{R}^3 \times J_i)} =1 \label{eq:exinthyp1} \,,\\
&\norm{u_{l, t_+}}_{L^{2(p-1)}(\mathbb{R}^3 \times J_i)} + \norm{u_{l, t_-}}_{L^{2(p-1)}(\mathbb{R}^3 \times J_i)} \leq B_{exc}^{-1}  \label{eq:exinthyp2}\,
\end{align}for all $i \in \{i_0, \dots, i_1 \}$, we have the estimate
\begin{equation}
N(K) \lesssim B_{exc}  \, .
\end{equation}
\end{lemma}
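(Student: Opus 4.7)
The plan is a contradiction argument in the spirit of \cite{Tao2, Roy}, adapted to the supercritical regime through Proposition~\ref{cor:spacetimedecay}. Fix a chain $K = J_{i_0}\cup\cdots\cup J_{i_1}$ of unexceptional intervals and suppose for contradiction that $N(K)$ is much larger than $B_{exc}$. Picking a representative interval $J_i \subseteq K$, the unexceptional condition \eqref{eq:exinthyp2} together with Corollary~\ref{cor:subseqintervals} ensures that the various free evolutions $u_{l, t}$ for $t$ ranging over endpoints of chain intervals are small in $L^{2(p-1)}$ on $\Gamma_+(J_i)$, so by Duhamel the nonlinear part of $u$ carries essentially all of $\norm{u}_{L^{2(p-1)}(\mathbb{R}^3\times J_i)}=1$. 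Apply the mass concentration Lemma~\ref{lem:massconc} with $s = s_p$ (the critical Sobolev regularity) on $J_i$ to obtain a point $(x_*, t_*) \in \mathbb{R}^3 \times J_i$ and a radius $r_* > 0$ satisfying the lower bound \eqref{eq:massconcest} and, crucially, the upper bound \eqref{eq:massconcestinterval}, which controls $r_*^{s_p}$ in terms of $|J_i|, EM^{\delta/2}, L$. The choice $s = s_p$, rather than $s = 1$ as in \cite{Tao2, Roy}, is what makes the quantitative supercritical argument close.

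After translating so that $x_* = 0$ and setting $z_0 := (0, t_* - 2 r_*)$, the $L^2$-concentration at time $t_*$ converts, via a Hardy-type estimate and the energy-flux identity \eqref{eq:localizedenergy1}, into a lower bound $e(T_1) \gtrsim c(EM^{\delta/2}, L)$ for the localized energy at a reference time $T_1 \sim r_*$ on the forward cone from $z_0$. Let $T_2$ be comparable to the terminal time of the shifted chain. Then Proposition~\ref{cor:spacetimedecay}, in the large-energy form from Remark~\ref{rmk:spacetimedecayhyp3}, applies: choose $\eta$ to be a small universal constant satisfying \eqref{eq:spacetimedecayhyp3}, so that if the conclusion $\norm{u}_{L^{2(p-1)}(\Gamma_+([t', A t']))} \leq \eta$ were to hold on a subinterval $[t', At'] \subseteq [T_1, T_2]$ containing some chain interval $J_{i'}$ deep inside $\Gamma_+$, finite speed of propagation would force $\norm{u}_{L^{2(p-1)}(\mathbb{R}^3 \times J_{i'})} \leq \eta < 1$, contradicting \eqref{eq:exinthyp1}. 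Hence the largeness hypothesis \eqref{eq:spacetimedecayhypI-simple} must fail for this choice of $\eta$ and a suitable $A$, which yields an upper bound on $T_2/T_1$ of double-exponential type in $EM^{\delta/2} L$.

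Finally, applying the lower bound \eqref{eq:massconcestinterval} (at $s = s_p$) to every chain interval $J_{i'}$, each $|J_{i'}|$ is bounded below by a controlled power of $r_{i'}^{s_p}/(EM^{\delta/2})^{\alpha_1'(s_p)}L^{\alpha_0'(s_p)}$, so summing yields $N(K) \lesssim T_2/T_1$ times a controlled factor. Combining this with the bound on $T_2/T_1$ from the previous paragraph and using the standing hypothesis $B_{exc}^{\delta/2} \leq 2$ to replace the $(M^{(p-1)/2} T_2)^{\delta/2}$ factor inside \eqref{eq:spacetimedecayhypI-simple} by an $O(1)$ constant (this is where the restriction $B_{exc}^{\delta/2}\leq 2$ becomes essential), one arrives at $N(K) \lesssim B_{exc}$ with the exponent $176$ emerging from the combined powers of $EM^{\delta/2}$ and $L$ coming from Proposition~\ref{cor:spacetimedecay} and Lemma~\ref{lem:massconc}. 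The main obstacle will be the parameter matching in the middle step: one must calibrate $(\eta, A, T_1, T_2)$ so that Proposition~\ref{cor:spacetimedecay}'s decay interval $[t', At']$ intersects an actual chain interval nontrivially inside $\Gamma_+$, while simultaneously verifying \eqref{eq:spacetimedecayhyp1}, \eqref{eq:spacetimedecayhypA} and \eqref{eq:spacetimedecayhypI-simple} with exponents that integrate cleanly with the scaling of $r_*$ set in the first step.
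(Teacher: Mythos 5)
There are several genuine gaps in the proposed argument. The most serious is the mechanism for deriving the contradiction from Proposition~\ref{cor:spacetimedecay}. You claim that if $\norm{u}_{L^{2(p-1)}(\Gamma_+([t',At']))}\leq\eta$ and some chain interval $J_{i'}\subseteq[t',At']$, then ``finite speed of propagation would force $\norm{u}_{L^{2(p-1)}(\mathbb{R}^3\times J_{i'})}\leq\eta$''. This implication is false: the truncated cone $\Gamma_+([t',At'])$ is a \emph{subset} of the strip $\mathbb{R}^3\times[t',At']$, so smallness on the cone gives no control whatsoever on the full strip. The actual route to a contradiction (which your proposal has no analogue of) is the dichotomy between consecutive intervals: using the asymptotic stability of Corollary~\ref{cor:subseqintervals} together with the unexceptional hypothesis~\eqref{eq:exinthyp2}, one shows that for each $j$, either $\lvert\tilde J_{j+1}\rvert\lesssim\tilde\eta^{-15}\lvert\tilde J_j\rvert$, or $\lvert\tilde J_j\rvert\gtrsim\tilde\eta^{5}M^{\delta/4}B_{exc}r_0^{s_p}$. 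The smallness conclusion from Proposition~\ref{cor:spacetimedecay}, applied on the truncated cone with the parameter $\tilde\eta\sim(LEM^{\delta/2})^{-3/2}$ (\emph{not} a universal constant), then forces the existence of some $j$ with $\lvert\tilde J_j\rvert\geq\frac{\sqrt A}{2}\lvert\tilde J_{j-1}\rvert$, which with $A\gg\tilde\eta^{-30}$ contradicts the first dichotomy alternative — the unexceptional hypothesis~\eqref{eq:exinthyp1} is never violated directly.

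A second gap is that you never establish the upper bound on the mass-concentration radius, which is essential to close the argument. After applying Lemma~\ref{lem:massconc} on every $J_i$ and taking the \emph{minimum} radius $r_0$ (you need the minimum, not a single representative, to obtain a uniform lower bound on all interval lengths from~\eqref{eq:massconcestinterval}), the paper's Steps~1 and~2 propagate the mass concentration on a cylinder $B(x_0,r_0)\times\tilde J_0$ for a time of order $r_0^{s_p}$ via Lipschitz continuity of the local mass, then compare the resulting lower bound on $\norm{u}_{L^{2(p-1)}(\Gamma_+(\tilde J_0))}$ with the upper bound~\eqref{eq:upperboundspacetimeJ0} coming from the lower bound on interval lengths. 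This yields the upper bound~\eqref{eq:upperboundr0} on $r_0$, which in turn bounds $T_2\lesssim M^{\delta/4}B_{exc}r_0^{s_p}$ and hence the troublesome factor $(M^{(p-1)/2}T_2)^{\delta/2}\lesssim B_{exc}^{\delta/2}(EM^{\delta/2}L)^{105}$. Your proposal instead invokes a ``Hardy-type estimate and the energy-flux identity'' to turn $L^2$-concentration into a lower bound on the localized energy; this is not what the paper does, is not obviously sufficient, and bypasses the upper bound on $r_0$ entirely, so the $T_2$-dependence in~\eqref{eq:spacetimedecayhypI-simple} remains uncontrolled. Without the minimum-radius construction, the cylinder step, and the dichotomy, the exponent $176$ and the factor $B_{exc}$ in the conclusion cannot be tracked.
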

\begin{proof}[Proof of Lemma \ref{lem:chain}] %{\color{red}To shorten notation, we will write $E$ instead of $E$ and $L$ instead of $L$ in Step 0-3. Only in Step 4, we will distinguish them.}
\textit{Step 0: Let $\alpha_0,\, \alpha_0',\, \alpha_1$ and $\alpha_1'$ be defined through Lemma \ref{lem:massconc} for $s=s_p$, that is with $\gamma:= 2 \left(\frac{3}{2s_p}\right)^2 \in [7/2, 9/2]$
\begin{equation}\label{eq:alphas}
\alpha_0 = \gamma-2 \in \big[ \frac 32 , \frac 52\big] \,, \alpha_1= \frac{6 \gamma}{5(p-1)} \in\big[\frac 34 ,\frac 32\big] \, , \alpha_0'=5 + \frac{3}{2} \delta \in \big[5,\frac {13}2\big] \, \text{ and } \, \alpha_1'=\frac{1}{2} \, .
\end{equation}
We prove that there exists $(t_0, x_0, r_0) \in K \times \mathbb{R}^3 \times (0, +\infty)$ such that 
\begin{enumerate}[(i)]
\item mass concentrates in $B(x_0,r_0)$ at time $t_0$, i.e.
\begin{equation}\label{eq:massconc}
\frac{1}{r_0^{2s_p}} \int_{B(x_0, r_0)} u^2(y, t_0) \, \mathrm{d}y \geq C_6 L^{-\alpha_0} (EM^\frac{\delta}{2})^{-\alpha_1} \, ,
\end{equation}
\item the length of the $J_i$ is uniformly bounded from below in terms of $r_0$, i.e. for all $i=i_0, \dots, i_1$
\begin{equation}\label{eq:lowerboundexint}
\lvert J_i \rvert \geq C_7 L^{-\alpha_0'} (EM^\frac{\delta}{2})^{-\alpha_1'} M^\frac{\delta}{4} r_0^{s_p} \,.
\end{equation}
\end{enumerate}
From (i), we immediately also deduce the lower bound on the mass concentration radius 
\begin{equation}\label{eq:lowerboundr0}
r_0 \gtrsim \left( L^{-\alpha_0} (EM^\frac{\delta}{2})^{-\alpha_1} \right)^\frac{p-1}{4} M^{-\frac{p-1}{2}} \, .
\end{equation}
}
By \eqref{eq:exinthyp1}, we can apply the mass concentration Lemma \ref{lem:massconc} with $\eta=1$ and $s=s_p$ to find that for any $i\in \{ i_0, \dots, i_1 \}$ there exists $(t_i, x_i, r_i) \in J_i \times \mathbb{R}^3 \times (0, +\infty)$ such that
\begin{align}
\frac{1}{r_i^{2 s_p}} \int_{B(x_i, r_i)} u^2(y, t_i) \, \mathrm{d}y &\geq C_6 L^{-\alpha_0} (EM^\frac{\delta}{2})^{-\alpha_1} \,,\\
\lvert J_i \rvert &\geq C_7 L^{-\alpha_0'} (EM^\frac{\delta}{2})^{-\alpha_1'} M^\frac{\delta}{4} r_i^{s_p} \,.
\end{align}
Defining the minimal mass concentration radius $r_0:= \min_{i \in \{i_0, \dots, i_1\}} r_i$ and calling the associated point in spacetime $(x_0, t_0)$ we reached (i) and (ii). 
%To shorten notation, let us define the lower bounds
%\begin{align}
%B_{lwb}&:= L^{-\alpha_0} (M^\frac{\delta}{2} E)^{-\alpha_1} \,,\\
%B_{lwb}'&:=L^{-\alpha_0'} (EM^\frac{\delta}{2})^{-\alpha_1'} M^\frac{\delta}{4} \, .
%\end{align}
The lower bound on the mass concentration radius \eqref{eq:lowerboundr0} is a consequence of the simple observation that the left-hand side of \eqref{eq:massconc} can be bounded from above, up to constants, by $ r_0^{3-2s_p} M^2 =  r_0^{\frac{4}{p-1}} M^2 \, .$ By time and space translation symmetry, we can assume that w.l.o.g. that $x_0=0$ and that $t_0=r_0$ such that $B(x_0, r_0)\times \{ t_0 \}% \subseteq \Gamma_+
$ lies in the forward wave cone centered in $(0,0)$. In view of (ii) it is enough to prove that 
\begin{equation}\label{eq:goallenghtofK}
\lvert K \rvert \lesssim %B_{lwb}' B_{exc} r_0^{s_p}= 
L^{-\alpha_0'} (EM^\frac{\delta}{2})^{-\alpha_1'} M^\frac{\delta}{4} B_{exc} r_0^{s_p} \, .
\end{equation}
Moreover, by time reversal symmetry, it is enough to estimate $K_+ := K \cap [t_0, +\infty)$, i.e. to show
\begin{equation}\label{eq:goallenghtofKplus}
\lvert K_+ \rvert \lesssim %B_{lwb}' B_{exc} r_0^{s_p}= 
L^{-\alpha_0'} (EM^\frac{\delta}{2})^{-\alpha_1'} M^\frac{\delta}{4} B_{exc} r_0^{s_p} \, .
\end{equation}

\textit{Step 1: We find a cylinder $B(x_0, r_0) \times \tilde J_0 \subseteq \Gamma_+(K_+)$ in spacetime such that 
\begin{enumerate}[(i)]
\item mass still concentrates in $B(x_0, r_0)$ for any $t \in \tilde{J}_0$, i.e. for $t\in \tilde{J}_0$ it holds
\begin{equation}\label{eq:massconcprolonged}
\frac{1}{r_0^{2 s_p}} \int_{B(x_0, r_0)} u^2(y, t) \, \mathrm{d}y \geq \frac{C_6}{2} L^{-\alpha_0} (M^\frac{\delta}{2} E)^{-\alpha_1} \, ,
\end{equation}
\item $\tilde{J}_0$ has controlled length, i.e. $L^{-\frac{\alpha_0}{2}} (M^\frac{\delta}{2} E)^{-\frac{\alpha_1+1}2} M^\frac{\delta}{4}  r_0^{s_p} \lesssim \lvert \tilde{J}_0 \rvert \leq M^\frac{\delta}{4} r_0^{s_p}\, ,$% ADDED LWB
\item $\tilde{J}_0$ does not carry too much of the spacetime norm. More precisely, 
\begin{equation}\label{eq:upperboundspacetimeJ0}
\norm{u}_{L^{2(p-1)}(\mathbb{R}^3 \times \tilde{J}_0)}^{2(p-1)} \lesssim L^{\alpha_0' - \frac{\alpha_0}{2}} \, .
\end{equation}
\end{enumerate}
} 

The local mass is Lipschitz in time with Lipschitz constant at most $\norm{\partial_t u }_{L^\infty(J, L^2(\mathbb{R}^3))} \lesssim E^\frac{1}{2}$. More precisely, we have that 
\begin{equation}
\left \lvert \Big(\int_{B(x_0, r_0) } u^2(y, t) \, \mathrm{d}y \Big)^\frac{1}{2} -\Big(\int_{B(x_0, r_0) } u^2(y, t_0) \, \mathrm{d}y \Big)^\frac{1}{2}  \right \rvert \lesssim E^\frac{1}{2} \lvert t-t_0  \rvert \, .
\end{equation}
In particular, if $ E^\frac{1}{2} \lvert t-t_0 \rvert \leq c_1 L^{-\frac{\alpha_0}{2}} (M^\frac{\delta}{2} E)^{-\frac{\alpha_1}2} r_0^{s_p}$ for a universal $0< c_1 <<1$ yet to be chosen sufficiently small, then we still have the mass concentration on the bubble $B(x_0,r_0) \times \tilde{J}_0$, where $\tilde J_0 := [t_0, t_0 + c_1L^{-\frac{\alpha_0}{2}} (M^\frac{\delta}{2} E)^{-\frac{\alpha_1+1}2} M^\frac{\delta}{4}  r_0^{s_p}]$. More precisely, for any $t \in \tilde{J}_0$  \eqref{eq:massconcprolonged} holds. We observe that 
\begin{equation}\label{eq:sizeJ0}
\lvert \tilde{J}_0 \rvert = c_1 M^\frac{\delta}{4}L^{-\frac{\alpha_0}{2}}(EM^\frac{\delta}{2})^{-\frac 12 (\alpha_1+1)} r_0^{s_p} \leq c_1 c_0^{-\frac{1}{2}( \alpha_0 + \alpha_1+1)} M^\frac{\delta}{4} r_0^{s_p} \, ,
\end{equation}
such that we can choose $c_1 < c_0^\frac{5}{2}$ to ensure (ii). Finally, if $K_+ \subset \tilde{J}_0$ is a strict subset, then $\lvert K_+ \rvert \leq \lvert \tilde{J_0} \rvert$ and \eqref{eq:goallenghtofKplus} holds (for big enough constants in the definition of $B_{exc}$). Thus we can assume that $\tilde{J}_0 \subseteq K_+$ and hence $B(x_0, r_0) \times \tilde{J}_0 \subseteq \Gamma_+(K_+)\, .$ Finally, let us argue that $\tilde{J}_0$ cannot be covered by too many unexceptional intervals and thus cannot carry too much spacetime norm. Indeed, from \eqref{eq:lowerboundexint}, \eqref{eq:sizeJ0} and \eqref{eq:hyplargeenergy} we deduce that $\tilde J_0$ can be covered by at most 
$$\frac{c_1 L^{-\frac{\alpha_0}{2}} (EM^\frac{\delta}{2})^{-\frac{1}{2}(\alpha_1+1)} M^\frac{\delta}{4} r_0^{s_p}}{C_7 L^{-\alpha_0'} (EM^\frac{\delta}{2})^{-\alpha_1'} M^\frac{\delta}{4} r_0^{s_p}} \lesssim L^{\alpha_0'-\frac{\alpha_0}{2}} $$
many intervals of the family $\{J_i \}_{i=i_0}^{i_1}$%(where in the last inequality we used that $\alpha_1'=\frac{1}{2}$ and \eqref{eq:hyplargeenergy})
. Hence by \eqref{eq:exinthyp1} we deduce \eqref{eq:upperboundspacetimeJ0}.
%\begin{equation}
%\norm{u}_{L^{2(p-1)}(\mathbb{R}^3 \times \tilde{J}_0)}^{2(p-1)} \lesssim L^{\frac{\alpha_0'-\alpha_0}{2}}   \, .
%\end{equation}

 \textit{Step 2: Let 
 \begin{equation}\label{eq:tildeeta}
\tilde{\eta}:= c_2 ( L EM^\frac{\delta}{2} )^{-\frac 32} \in (0,c_0') \, ,
\end{equation}
with $c_0'$ defined through Remark \ref{rmk:spacetimedecayhyp3} (so that $\tilde \eta$ is admissible for the spacetime norm decay on large intervals).
For a suitable choice of the universal constant $c_2$, we truncate $\Gamma_+(K_+)$ into wave cones $\{\Gamma_+(\tilde J_i) \}_{i=1}^k$ such that 
\begin{enumerate}[(i)]
 \item each of them carries substential spacetime norm $\tilde{\eta}$, i.e. $\norm{u}_{L^{2(p-1)}(\Gamma_+(\tilde J_i))} = \tilde \eta$ for $i=1, \dots, k-1$ and $\norm{u}_{L^{2(p-1)}(\Gamma_+(\tilde{J}_k))} \leq \tilde{\eta}\, ,$
% \item $\tilde{\eta} \in (0, c_0')$, with $c_0'$ defined through Remark \ref{rmk:spacetimedecayhyp3},is admissible for the spacetime norm decay on large intervals, 
 \item the first interval is not too long, that is $\tilde{J}_1 \subseteq \tilde J_0\, .$
\end{enumerate}
}

For an $\tilde{\eta}$ yet to be chosen, we will truncate $\Gamma_+(K_+)$ into wave cones $\{\Gamma_+(\tilde J_i) \}_{i=1}^k$ such that $\norm{u}_{L^{2(p-1}(\Gamma_+(\tilde J_i))} = \tilde \eta$ for $i=1, \dots, k-1$ and $\norm{u}_{L^{2(p-1)}(\Gamma_+(\tilde{J}_k))} \leq \tilde{\eta}$. We come to the choice of $\tilde \eta$. Let us estimate the spacetime norm on the mass concentration cylinder from above
\begin{align}
\int_{\tilde J_0} \int_{B(x_0, r_0)} u^2(y, t) \, \mathrm{d}y \, \mathrm{d}t &\lesssim \left( \int_{\Gamma_+(\tilde J_0)} \lvert u \rvert^{2(p-1)} (y, t) \, \mathrm{d}y \,\mathrm{d}t \right)^\frac{1}{p-1} \lvert \tilde J_0  \rvert^\frac{p-2}{p-1}   r_0^{\frac{3(p-2)}{p-1}}
\end{align}
and from below, using \eqref{eq:massconcprolonged},
\begin{align}
\int_{\tilde J_0} \int_{B(x_0, r_0)} u^2(y, t) \, \mathrm{d}y \, \mathrm{d}t &\gtrsim \lvert \tilde J_0 \rvert L^{-\alpha_0} (M^\frac{\delta}{2} E)^{-\alpha_1} r_0^{2{s_p}} \, .
\end{align}
We have obtained, using the definition of $\tilde{J}_0$ from Step 1, that 
\begin{align}
\norm{u}_{L^{2(p-1)}(\Gamma_+(\tilde J_0))} &\gtrsim 
%\lvert \tilde J_0 \rvert^\frac{1}{2(p-1)} B_{lwb}^\frac{1}{2} r_0^{s- \frac{3(p-2)}{2(p-1)}}\\ 
%&= B_{lwb}^{\frac{2p-1}{4(p-1)}} E^{-\frac{1}{4(p-1)}}  r_0^{s_p \frac{2p-1}{2(p-1)} - \frac{3(p-2)}{2(p-1)}} \\
%&=
 (L^{-\alpha_0} (EM^\frac{\delta}{2})^{-\alpha_1} )^{\frac{2p-1}{4(p-1)}} (E^{-1} r_0^\frac{\delta}{p-1} )^\frac{1}{4(p-1)} \, .
\end{align}
Using \eqref{eq:upperboundspacetimeJ0}, we obtain an upper bound on $r_0$, that is 
\begin{align}\label{eq:upperboundr0}
r_0^\delta &\lesssim \left(L^{\alpha_0} (EM^\frac{\delta}{2})^{\alpha_1} \right)^{(2p-1)(p-1)} E^{p-1} \norm{u}_{L^{2(p-1)}(\Gamma_+(\tilde J_0))}^{4(p-1)^2} \nonumber \\
&\lesssim \left(L^{\alpha_0} (EM^\frac{\delta}{2})^{\alpha_1} \right)^{(2p-1)(p-1)} E^{p-1} L^{(\alpha_0'-\frac{\alpha_0}{2})2(p-1)} \nonumber \\
&= M^{-\frac{\delta(p-1)}{2}}L^{2(p-1)(\alpha_0(p-1)+\alpha_0')} (EM^\frac{\delta}{2})^{(p-1)(\alpha_1(2p-1)+1)} \, .
\end{align}
On the other hand, using the lower bound on $r_0$ given by \eqref{eq:lowerboundr0}, we can estimate furthermore, recalling \eqref{eq:hyplargeenergy} and \eqref{eq:alphas}, that
\begin{align}
\norm{u}_{L^{2(p-1)}(\Gamma_+(\tilde J_0))} &\gtrsim (L^{-\alpha_0} (EM^\frac{\delta}{2})^{-\alpha_1})^{\frac{2p-1}{4(p-1)}+\frac{\delta}{16(p-1)}} (EM^\frac{\delta}{2})^{-\frac{1}{4(p-1)}} \\
&= L^{-\frac{9 }{16} \alpha_0} (EM^\frac{\delta}{2})^{-(\frac{9}{16}\alpha_1 +\frac{1}{4(p-1)})} \\
&\gtrsim ( L EM^\frac{\delta}{2} )^{-\frac 3 2}\,.
\end{align}
Thus choosing 
$\tilde{\eta}:= c_2 ( L EM^\frac{\delta}{2} )^{-\frac 32},$ for a small universal constant $0<c_2<1$, we ensure that $\tilde{J_1} \subseteq \tilde J_0$. 
Choosing $c_2$ even smaller, namely $c_2 \leq  c_0' c_0^{3 }$, we ensure that $\tilde{\eta} \in (0, c_0')$, with $c_0'$ given by Remark \ref{rmk:spacetimedecayhyp3}.
% Indeed, we have
%\begin{equation}\label{eq:smallnessoftildeeta}
%\tilde{\eta} \leq c_2 c_0^{-2\gamma } < c_0' \, ,
%\end{equation}
%provided $0< c_2 << c_0$. 

\textit{Step 3: We prove the following dichotomy (analogous to \cite[Lemma 5.2]{Tao2}). Let $j \in \{1, \dots, k-1 \}$. Then, for some universal constants $C_8>8$ and $C_9<1$, either
\begin{align}\label{eq:dichotomycase1}
\lvert \tilde J_{j+1} \rvert \leq C_8 \tilde{\eta}^{-15} \lvert \tilde{J}_j \rvert 
\intertext{ or} \label{eq:dichotomycase2}
\lvert \tilde{J}_j \rvert \geq  C_9 \tilde{\eta}^5 M^\frac{\delta}{4} B_{exc} r_0^{s_p}  \, .
\end{align}
 }
 Consider two subsequent intervals $\tilde{J}_j=[t_{j-1}, t_j]$ and $\tilde{J}_{j+1}=[t_j, t_{j+1}]$ for some $j \in \{1, \dots, k-1 \}$. We have by the localized Strichartz estimates \eqref{eq:mstrichartzlocalized} (with $(\tilde{q}, \tilde{r})= (2, \frac{6(p-1)}{3p+1})$ and $v:=u-u_{l,t_{j+1}}$ solving $\Box v = \vert u \rvert^{p-1} u$ with initial datum $(v, \partial_t v)(t_{j+1})=(0,0)$) and H\"older that 
 \begin{align}
 \norm{u-u_{l, t_{j+1}}}_{L^{2(p-1)}(\Gamma_+(\tilde J_j))} &\lesssim \norm{\lvert u \rvert^{p-1} u }_{L^{\tilde q} L^{\tilde r}(\Gamma_+(\tilde J_j \cup \tilde J_{j+1}))}\\
 &\lesssim \norm{u}_{L^\infty L^{\frac{3(p-1)}{2}}(\Gamma_+(\tilde J_j \cup \tilde J_{j+1}))} \norm{u}_{L^{2(p-1)}(\Gamma_+(\tilde J_j \cup \tilde J_{j+1}))}^{p-1} \\
 &\lesssim \norm{u}_{L^\infty(\mathbb{R}^3 \times J)}^{\frac{\delta}{3(p-1)}} \norm{u}_{L^\infty L^{p+1}(\mathbb{R}^3 \times J)}^\frac{2(p+1)}{3(p-1)}\tilde{\eta}^{p-1} \\
 &\lesssim (EM^\frac{\delta}{2})^\frac{2}{3(p-1)} \tilde{\eta}^{p-1} \, .
 \end{align}
Using %$\frac{2}{3(p-1)} < 2(p-2) \leq \gamma (p-2)$ as well as 
 \eqref{eq:hyplargeenergy} and \eqref{eq:tildeeta}, we have that
\begin{equation}
\tilde{\eta}^{p-2} (EM^\frac{\delta}{2} )^\frac{2}{3(p-1)} \leq c_2^{\frac{4}{9(p-1)}} L^{-\frac4 {9(p-1)}} \tilde \eta^{p-2-\frac{4}{9(p-1)}} \leq ( c_2  c_0^{ -1} )^{\frac4 {9(p-1)}} \leq  (c_0') ^{\frac4 {9(p-1)}} c_0^{\frac 8 {9(p-1)}} \leq c_0^\frac{8}{9(p-1)}\, ,
\end{equation}
where we recall that from the choice of $c_0$ in Lemma \ref{lem:smallenergy}, it is clear that it beats also the constant arising from Strichartz estimates. We infer $\norm{u-u_{l, t_{j+1}}}_{L^{2(p-1)}(\Gamma_+(\tilde J_j))} \leq  \tilde{\eta} \, .$
 Since   $\norm{u}_{L^{2(p-1)}(\Gamma_+(\tilde J_j))} = \tilde{\eta}$ by construction, the triangular inequality implies that $$\norm{u_{l, t_{j+1}}}_{L^{2(p-1)}(\Gamma_+(\tilde J_j))} \gtrsim \tilde \eta\,.$$ This now gives raise to a dichotomy: either $\norm{u_{l, t_{j+1}}- u_{l, t_+}}_{L^{2(p-1)}(\Gamma_+(\tilde J_j))} \gtrsim \tilde \eta$ or the scattering solution $u_{l, t_+}$ is non-negligible $\norm{u_{l, t_+}}_{L^{2(p-1)}(\Gamma_+(\tilde J_j))} \gtrsim \tilde{\eta}$.
\\
\textit{Case 1:} Assume $\norm{u_{l, t_{j+1}}- u_{l, t_+}}_{L^{2(p-1)}(\Gamma_+(\tilde J_j))} \gtrsim \tilde \eta$. Then in view of Corollary \ref{cor:subseqintervals}, we have 
\begin{align}
\lvert \tilde J_{j+1} \rvert &\lesssim  \tilde \eta^{-2(p-1)} (EM^\frac{\delta}{2})^\frac{p}{3} L^\frac{3(p-1)}{2} \lvert \tilde J_j \rvert \lesssim \tilde{\eta}^{-2(p-1)} (EM^\frac{\delta}{2} L)^{\frac {15}{2}}\lvert \tilde J_j \rvert 
 \lesssim \tilde{\eta}^{-15}\lvert \tilde J_j \rvert   \, ,
\end{align}
where in the second inequality we used \eqref{eq:hyplargeenergy} and in the last the definition  \eqref{eq:tildeeta}.
\\
\textit{Case 2:} Assume $\norm{u_{l, t_+}}_{L^{2(p-1)}(\Gamma_+(\tilde J_j))} \gtrsim \tilde \eta$. Recall that $K_+$ consists of unexceptional intervals. Hence we need at least $\tilde{\eta} B_{exc}$ many of them to cover $\tilde{J}_j$. Recalling the lower bound on the length of unexceptional intervals, the definition of $\tilde{\eta}$,  \eqref{eq:hyplargeenergy} and that $\alpha_0' > \alpha_1'$ from \eqref{eq:alphas}, we have 
\begin{align}
\lvert \tilde J_j \rvert &\geq C_7 \tilde{\eta} L^{-\alpha_0'} (EM^\frac{\delta}{2})^{-\alpha_1'} M^\frac{\delta}{4} B_{exc} r_0^{s_p} \\
&= C_7 \tilde{\eta} (EM^\frac{\delta}{2}L)^{-\alpha_0'}  (EM^\frac{\delta}{2})^{\alpha_0'- \alpha_1'}  M^\frac{\delta}{4} B_{exc} r_0^{s_p} \\
&\geq C_7 \tilde{\eta}^{1+\frac 23 \alpha_0'}c_2^{-\frac{2\alpha_0'}{3}} c_0^{\alpha_0'-\alpha_1'} M^\frac{\delta}{4} B_{exc} r_0^{s_p} \\
 &\geq C_9 \tilde{\eta}^{\frac{11}{2}} M^\frac{\delta}{4} B_{exc} r_0^{s_p} \,,
\end{align}
where in the last inequality we introduced a universal constant $C_9\leq C_7 c_2^{-\frac{2\alpha_0'}{3}} c_0^{\alpha_0'-\alpha_1'} $.% and used {$\gamma \geq 2$}.
\\
\\
\textit{Step 4: We show that $$\lvert K_+ \rvert \leq C_9 \tilde{\eta}^{\frac{11}2} M^\frac{\delta}{4} B_{exc} r_0^{s_p} \, .$$ Since $0<\tilde \eta \leq 1$, this implies in particular that $\lvert K_+ \rvert \leq  C_9 M^\frac{\delta}{4} B_{exc} r_0^{s_p} $ and we achieved \eqref{eq:goallenghtofKplus}, thereby concluding the proof.}
\\ 
\\
Let us therefore assume by contradiction that $\lvert K_+ \rvert >  C_9 \tilde{\eta}^{\frac{11}2} M^\frac{\delta}{4} B_{exc} r_0^{s_p} $. We call $\tilde{J}_{j_1}$ the first interval for which $\lvert \tilde{J}_1 \cup \dots \cup \tilde J_{j_1} \rvert > C_9 \tilde{\eta}^{\frac{11}2} M^\frac{\delta}{4} B_{exc} r_0^{s_p} $. We observe that up to choosing the constant $C$ in the definition of $B_{exc}$ big enough, we may assume that
\begin{equation}\label{eq:assumpBexc}
\tilde \eta^{\frac{11}2} B_{exc} > \max\big\{\frac 2 {C_9}, 1\big\}\, .
\end{equation}
By the definition of $j_1$, we then have
 \begin{enumerate}[(i)]
 \item $j_1 \neq 1$. Indeed, by Step 1 and Step 2, $\lvert \tilde{J}_1 \rvert \leq \lvert \tilde J_0 \rvert \leq  M^\frac{\delta}{4} r_0^{s_p}\,.$
 \item For every $ j \in \{1, \dots, j_1-1 \}$ we have $\lvert \tilde J_{j+1} \rvert \leq C_8 \tilde \eta^{-15} \lvert \tilde{J}_j \rvert$. This follows from Step 3 since the second option in the dichotomy is ruled out.
 \end{enumerate}
Let us call $[T_1, T_2] := \tilde{J}_2 \cup \dots \tilde{J}_{j_1-1}$. We want to apply the spacetime norm decay result of Proposition \ref{cor:spacetimedecay} on $I= [T_1, T_2]$ with $\eta=\frac{\tilde \eta}{4}$. Recall that by choice of $\tilde{\eta}$ in Step 2, we have that $\frac{\tilde \eta}{4} \in (0, c_0')$ is admissible for the spacetime norm decay.  We need thus a lower bound on the length of $I$. By construction, Step 2 and (ii)
\begin{align}
C_9 \tilde{\eta}^{\frac{11}2} M^\frac{\delta}{4} B_{exc} r_0^{s_p} &\leq \lvert \tilde{J}_1 \rvert + \dots + \lvert \tilde J_{j_1} \rvert \leq M^\frac{\delta}{4} r_0^{s_p} + (T_2-T_1) + C_8 \tilde{\eta}^{-15} (T_2-T_1)\,,
\end{align}
so that
\begin{align}
T_2-T_1 &\geq \frac{1}{2C_8} \tilde{\eta}^{{\frac{41}2}} M^\frac{\delta}{4} B_{exc} r_0^{s_p} \, .
\end{align}
On the other hand, we have from Step 2 and the lower bound on $r_0$ \eqref{eq:lowerboundr0}
\begin{align}
T_1&\leq r_0 +M^\frac{\delta}{4} r_0^{s_p}= M^\frac{\delta}{4} r_0^{s_p} (1+r_0^{1-s_p} M^\frac{-\delta}{4}) \lesssim  M^\frac{\delta}{4} r_0^{s_p} \left(1+ (L^{\alpha_0} (EM^\frac{\delta}{2})^{\alpha_1})^\frac{2\delta}{(p-1)^2} \right)\\
&\lesssim M^\frac{\delta}{4} r_0^{s_p} \tilde{\eta}^{-\frac{2(\alpha_0+\alpha_1)\delta}{\gamma(p-1)^2}} \lesssim \tilde{\eta}^{-\frac{1}{4}} M^\frac{\delta}{4} r_0^{s_p}
\, .
\end{align}
Summarizing, we have obtained
 \begin{equation}\label{eq:estimateonlength}
\frac{T_2}{T_1} \geq \frac{T_2-T_1}{T_1} \geq C_{10} \tilde{\eta}^{21} B_{exc} \, .
 \end{equation}
We now claim that to reach a contradiction, it is enough to find $A$ and a constant $C \geq 1$ such that we can verify the following three requirements: \\
 \begin{enumerate}[(R1)]
 \item $A$ satifies the hypothesis \eqref{eq:spacetimedecayhypA} of Proposition \ref{cor:spacetimedecay}, that is  $A > (4C_2  \tilde{\eta}^{-1})^\frac{12(p-1)}{5} (EM^\frac{\delta}{2})^\frac{14}{5} \,,$
 \item The interval $I$ is sufficiently large to apply Proposition \ref{cor:spacetimedecay}, i.e. \eqref{eq:spacetimedecayhypI}  is verified. In view of \eqref{eq:estimateonlength}, we can enforce  \eqref{eq:spacetimedecayhypI-simple} if 
 $$B_{exc}= (C E M^\frac{\delta}{2} L)^{C (E M^\frac{\delta}{2} L)^{176}} \geq C_{10}^{-1} \tilde \eta^{-21} A^{3 (4C_2 \tilde \eta^{-1})^\frac{6(p-1)(p+1)}{5} (EM^\frac{\delta}{2})^\frac{9p+19}{10}\max\{c_0^\frac{p-1}{2}, (M^\frac{p-1}{2} T_2)^\frac{\delta}{2} \} }\,,$$
 \item Moreover $\sqrt{A}> 2 C_8 \tilde{\eta}^{-15} \, .$
 \end{enumerate}
Observe that (R3) ensures in particular that $A >4$. If (R1)-(R3) hold, we are in the position to conclude the proof following \cite{Roy}. The difficulty in the supercritical case instead relies in verifying the requirements (R1)-(R3). Indeed, if (R1)-(R3) hold, we infer from Proposition \ref{cor:spacetimedecay} that there exists $[t_1', At_1'] \subseteq \tilde{J}_2 \cup \dots \tilde{J}_{j_1-1}$ such that 
\begin{equation}
\norm{u}_{L^{2(p-1)} (\Gamma_+([t_1', At_1']))} \leq \frac{\tilde{\eta}}{4} \, .
\end{equation}  
In particular, $[t_1', At_1']$ is covered by at most two consecutive intervals of the family $\{J_j \}_{j=2}^{j_1-1}$. 
We claim that then there exists $j \in \{2, \dots, j_1-1\}$ such that 
\begin{equation}\label{eqn:roy}
\lvert \tilde{J}_j \rvert \geq \frac{\sqrt{A}}{2} \lvert \tilde{J}_{j-1} \rvert \,.
\end{equation}
Notice that in view of (R3), the claim contradicts (ii) such that we reached a contradiction. 
Indeed, assume first, that $[t_1' , At_1']$ is covered by one interval $\tilde{J}_j$ for some $j \in \{2, \dots, j_1-1 \}\, .$ Then, recalling that $A >4$, we have
\begin{equation}
\lvert \tilde{J}_j \rvert \geq t_1' (A-1) \geq \frac{A}{2} t_1' \geq \frac{A}{2} \lvert \tilde{J}_{j-1} \rvert \geq \frac{\sqrt{A}}{2} \lvert \tilde{J}_{j-1} \rvert \, .
\end{equation}
Assume now that $[t_1', At_1']$ is covered by two intervals $\tilde{J}_j= [a_j, b_j]$ and $\tilde{J}_{j+1}= [a_{j+1}, b_{j+1}]$ for some $j\in \{2, \dots, j_1-2\}$. We consider two cases. First, if $b_j \leq \sqrt A t_1'$, then $\lvert \tilde{J}_{j+1} \rvert \geq t_1' (A-\sqrt A)$ and $\lvert \tilde{J}_j \rvert \leq \sqrt A t_1'$ such that
\begin{equation}
\lvert \tilde{J}_{j+1} \rvert \geq (\sqrt A - 1) \lvert \tilde{J}_ j \rvert \geq \frac{\sqrt{A}}{2} \lvert \tilde{J}_j \rvert \, .
\end{equation}
Second, if $b_j > \sqrt A t_1'$, then $\lvert \tilde{J}_{j} \rvert \geq (\sqrt{A}-1) t_1'$ and $\lvert \tilde{J}_{j-1} \rvert \leq t_1'$ such that 
\begin{equation}
\lvert \tilde J_j \rvert \geq (\sqrt A-1) \lvert \tilde J_{j-1} \rvert \geq \frac{\sqrt A}{2} \lvert \tilde J_{j-1} \rvert \, .
\end{equation}
This proves \eqref{eqn:roy}.

To conclude the proof, we are left to verify the requirements (R1)-(R3) by choosing $A$ and $C$. We observe that the right-hand side of (R1) can be bounded from above using \eqref{eq:tildeeta} and \eqref{eq:hyplargeenergy} by  
\begin{equation}
(4 C_2 \tilde{\eta}^{-1})^\frac{12(p-1)}{5} (EM^\frac{\delta}{2})^\frac{14}{5} \leq C_{11} \tilde{\eta}^{-14}
 \, ,
\end{equation}
such that (R1) and (R3) are enforced if we set
\begin{equation}\label{eq:choiceofA}
A:= C_{12} \tilde{\eta}^{-30}\,
\end{equation}
 for  $C_{12}:= \max \{ 3C_8, C_{11} \}^2 \, .$ We are left to verify (R2). We observe that from \eqref{eq:assumpBexc}
\begin{equation}
T_2 = T_1 + (T_2-T_1) \lesssim \tilde{\eta}^{-1} M^\frac{\delta}{4} r_0^{s_p} + \tilde{\eta}^{\frac{11}{2}}M^\frac{\delta}{4} B_{exc} r_0^{s_p} \lesssim M^\frac{\delta}{4} B_{exc} r_0^{s_p} \, .
\end{equation} 
Combining this with the upper bound on $r_0$ in \eqref{eq:upperboundr0} and using \eqref{eq:alphas}, we obtain
\begin{align}
(M^\frac{p-1}{2} T_2)^\frac{\delta}{2} &\lesssim (M^\frac{8+3\delta}{4} B_{exc} r_0^{s_p} )^\frac{\delta}{2} \\ 
&\lesssim B_{exc}^\frac{\delta}{2} L^{ s_p (p-1)(\alpha_0(p-1)+\alpha_0')} (EM^\frac{\delta}{2})^{\frac{s_p}{2}(p-1)(\alpha_1(2p-1)+1)} \\
&\lesssim B_{exc}^\frac{\delta}{2} (EM^\frac{\delta}{2} L) ^{105} \\
&\leq C_{13} B_{exc}^\frac{\delta}{2} \tilde{\eta}^{-70} \,.
\end{align}
We now bound the right-hand side of (R2) from above using again \eqref{eq:tildeeta} and \eqref{eq:hyplargeenergy} by  
 \begin{align}
C_{10}^{-1} \tilde \eta^{-21} &\left(C_{12} \tilde{\eta}^{-30} \right)^{3(4C_2 \tilde{\eta}^{-1})^{42}(EM^\frac{\delta}{2})^\frac{9p+ 19}{10} \max \{ c_0^\frac{p-1}{2}, \, (M^\frac{\delta(p-1)}{2} T_2)^\frac{\delta}{2}\}} \\
 &\leq   C_{10}^{-1} \tilde \eta^{-21}\left(C_{12} \tilde{\eta}^{-30} \right)^{3 C_{13}(4C_2 \tilde{\eta}^{-1})^{42}(c_2 c_0^{-1} \tilde \eta^{-1})^\frac{9p+19}{15} \tilde{\eta}^{-70} B_{exc}^\frac{\delta}{2} } \\
 &\leq (C' EM^\frac{\delta}{2}L)^{ C' \tilde{\eta}^{-117} B_{exc}^\frac{\delta}{2}} \\
& \leq (C EM^\frac{\delta}{2}L)^{\frac C 2(EM \frac{\delta}{2}L)^{176} B_{exc}^\frac{\delta}{2}} 
  \,,
 \end{align}
for a big enough constant $C, C' \geq 1$. We now define $B_{exc}$ to be 
\begin{equation}\label{eq:definitionofBexc}
B_{exc}:= (C EM^\frac{\delta}{2}L)^{ C(EM^\frac{\delta}{2}L)^{176}} \, .
\end{equation}
for the same constant $C$. With this definition, (R2) is enforced since we assumed $B_{exc}^\frac{\delta}{2} \leq 2$.%, namely if \eqref{eq:smallnessdelta} is in force.
%\begin{equation}
%B_{exc}^\frac{\delta}{2}= \left((C EM^\frac{\delta}{2}L)^{ C(EM^\frac{\delta}{2}L)^{195}}\right)^\frac{\delta}{2} \leq 2 \, .
%\end{equation}
\end{proof}
\begin{proof}[Proof of Corollary \ref{cor:spacetime}] Consider a solution $(u, \partial_t u) \in L^\infty(J, (\dot{H}^1 \cap \dot{H}^2 \times H^1)(\mathbb{R}^3))$ of \eqref{eq:nlw} with $p=5+\delta$ for $\delta \in [0,1)$ and with $\norm{(u, \partial_t u)}_{L^\infty(J, (\dot{H}^1 \cap \dot{H}^2 \times H^1)(\mathbb{R}^3))} \leq M_0\, .$ By interpolation, conservation of the energy and  the Sobolev embeddings $(\dot H^1 \cap \dot H^2)(\mathbb{R}^3) \hookrightarrow W^{1, 6}(\mathbb{R}^3)\hookrightarrow  L^\infty(\mathbb{R}^3)$, we observe
\begin{equation}
L:=\norm{(u, \partial_t u)}_{L^\infty(J, \dot H^{s_p} \times \dot H^{s_p-1})} \leq E ^{1-\frac{\delta}{2(p-1)}} M_0^\frac{\delta}{2(p-1)} \, ,
\end{equation}
\begin{equation}
M:=\norm{u}_{L^\infty(\mathbb{R}^3 \times J)} \leq C_S M_0 \, .
\end{equation}
By Theorem \ref{thm:spacetime}, if $\min\{EM^\frac{\delta}{2}, L \} < c_0\, ,$ then $\norm{u}_{L^{2(p-1)}(\R^3 \times J)} \leq 1\, .$ Otherwise, we may assume $\min\{EM^\frac{\delta}{2}, L \} \geq c_0$ and we fix $0 \leq \delta \leq \min \{1,  \frac{\ln 2}{\ln M_0}\}$. We estimate as above
\begin{align}
EM^\frac{\delta}{2} L \leq C_S^{\frac{\delta}{2}(1+ \frac{\delta}{2(p-1)})} c_0^{-\frac{\delta}{2(p-1)}} E^2 M_0^{\delta(1 - \frac{p+1}{4(p-1)})} \leq 2 C_S c_0^{-1} E^2 =: (C'E)^2
\end{align}
for $C':= \left(2 C_S c_0^{-1} \right)^\frac{1}{2}\,.$ Thus the Corollary follows, if we can meet the smallness requirement of Theorem \ref{thm:spacetime} which now reads, setting $\bar C := \sqrt{C}C',$
\begin{equation}
\left((\bar C E )^{2 C(C'E)^{352}} \right)^\delta \leq 2\,.
\end{equation}
The latter holds defining
\begin{equation}
\delta_0:= \min \left \{1, \frac{\ln 2}{\ln M_0} , \frac{\ln 2}{\ln (\bar C E) 2 C (\bar C E)^{352}} \right \} \,.
\end{equation}
Observe that $\delta_0$ depends on $M_0$ only, since $E=E(u_0, u_1)$ depends on the initial data only.
\end{proof}

\section{Proof of Theorem \ref{thm:main}}
 By time reversability, it is enough to consider forward-in-time solutions. Thanks to classical local-wellposedness and existence theory \cite{Sogge}, the proof of Theorem \ref{thm:main} consists in establishing an a priori bound on $\norm{(u, \partial_t u)}_{L^\infty([0, T], \dot H^1 \cap \dot H^2 \times H^1)}$ which is uniform in $T$.
\begin{lemma}[Local boundedness]\label{lem:localbd} Let  $\delta \in (0, 1)$, $p=5+\delta$ and consider a solution $(u, \partial_t u) \in L^\infty(I, \dot H^1\cap \dot H^2 \times H^1)$ to \eqref{eq:nlw} on $I=[t_0, t_1]$. Then there exists a universal constant $C_l\geq 1$ such that if 
\begin{equation}\label{eq:localbdsmall}
\norm{u}_{L^{2(p-1)}(\mathbb{R}^3 \times I)}^{p-1} < C_l^{-1}  \,,
\end{equation}
then 
\begin{equation}
\norm{(u, \partial_t u)}_{L^\infty(I, \dot H^1\cap \dot H^2 \times H^1)} \leq C_l \norm{(u, \partial_t u)(t_0)}_{H^1\cap \dot H^2 \times H^1} \, . 
\end{equation}
\end{lemma}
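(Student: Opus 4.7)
The idea is to apply the Strichartz estimate twice at the $\dot H^1 \times L^2$ level, using the smallness hypothesis \eqref{eq:localbdsmall} as an absorption tool for the nonlinear term. The first application handles $(u, \partial_t u)$ directly and yields the $\dot H^1 \times L^2$ part of the claim; the second applies to the differentiated quantity $v := \nabla u$, which solves
\[ \Box v = \nabla(|u|^{p-1}u) = p\,|u|^{p-1} v, \]
and gives the $\dot H^2 \times \dot H^1$ part.

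For the first step, I would apply \eqref{eq:mstrichartz} with $m = 1$ and dual pair $(\tilde q, \tilde r) = (2, 3/2)$ (which satisfies \eqref{eq:mwaveadmissible}), obtaining
\[
\|(u, \partial_t u)\|_{L^\infty(I, \dot H^1 \times L^2)} \leq C\bigl( \|(u, \partial_t u)(t_0)\|_{\dot H^1 \times L^2} + \||u|^{p-1}u\|_{L^2 L^{3/2}} \bigr).
\]
Writing $|u|^{p-1}u = |u|^{p-1} \cdot u$ and applying H\"older in space with exponents $2$ and $6$, followed by H\"older in time with exponents $2$ and $\infty$, I get
\[
\||u|^{p-1}u\|_{L^2 L^{3/2}} \leq \|u\|^{p-1}_{L^{2(p-1)}(\R^3 \times I)} \,\|u\|_{L^\infty(I, L^6)},
\]
and the Sobolev embedding $\dot H^1(\R^3) \hookrightarrow L^6(\R^3)$ controls the last factor by $\|u\|_{L^\infty \dot H^1}$. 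If $C_l$ is chosen large enough so that \eqref{eq:localbdsmall} forces $C\|u\|^{p-1}_{L^{2(p-1)}} \leq \tfrac{1}{2}$, this term is absorbed into the left-hand side.

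The second step is essentially identical, with $u$ replaced by $v = \nabla u$ on the Strichartz side and an extra factor of $p$ from the chain rule on the nonlinear side. The same dual pair and the same H\"older–Sobolev chain yield
\[
\|p|u|^{p-1} v\|_{L^2 L^{3/2}} \leq p \,\|u\|^{p-1}_{L^{2(p-1)}} \,\|v\|_{L^\infty \dot H^1},
\]
so after enlarging $C_l$ by the universal factor $p < 6$ the nonlinear term is absorbed in the same way. Summing the two outputs produces the $\dot H^1 \cap \dot H^2 \times H^1$ bound on $(u, \partial_t u)$, where the $L^2$ component of $\partial_t u$ comes from the first step and the $\dot H^1$ component from the second (via $\partial_t v = \nabla \partial_t u$). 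I do not expect a real obstacle here: the exponent $p - 1 \in (4,5)$ is bounded, the Strichartz constant in \eqref{eq:mstrichartz} is uniform over $m \in [1, 5/4]$, and the H\"older and Sobolev constants are independent of $\delta \in (0,1)$, so $C_l$ can indeed be chosen universal.
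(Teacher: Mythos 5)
Your proposal is correct and takes essentially the same route as the paper: the same Strichartz pair $(\tilde q,\tilde r)=(2,3/2)$ at level $m=1$, the same H\"older--Sobolev chain using $\dot H^1\hookrightarrow L^6$, and the same observation that the nonlinear term for $\nabla u$ is $p|u|^{p-1}\nabla u$. The only difference is procedural: you close the estimate by direct absorption (valid here because the hypothesis already gives $\norm{(u,\partial_t u)}_{L^\infty(I,\dot H^1\cap\dot H^2\times H^1)}<\infty$, so the term being absorbed is finite), whereas the paper sets up a running supremum $Y(t)=\sup_{[t_0,t]}Z(t')$ and runs a continuity/bootstrap argument to establish the same bound; the two are equivalent under the stated hypotheses.
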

\begin{proof} For $t\in I$, define $Z(t):=\norm{(u, \partial_t u)(t)}_{H^1\cap \dot H^2 \times H^1} \, .$ By Strichartz estimates \eqref{eq:mstrichartz}, H\"older and the Sobolev embedding of $\dot H^1 \hookrightarrow L^6$ we have
\begin{align}
Z(t) &\lesssim Z(t_0)+ \norm{\lvert u \rvert^{p-1}u}_{L^2([t_0, t], L^{3/2})}+ \norm{ \nabla(\lvert u \rvert^{p-1}u)}_{L^2([t_0, t], L^{3/2})} \\
&\lesssim Z(t_0)+   \norm{ \lvert u\rvert^{p-1}}_{L^2(\mathbb{R}^3 \times [t_0, t])} \left( \norm{u}_{L^\infty([t_0, t], L^6)} + \norm{\nabla u}_{L^\infty([t_0, t], L^6)}  \right) \\
&\lesssim Z(t_0)+   \norm{ u}_{L^{2(p-1)}(\mathbb{R}^3 \times [t_0, t])}^{p-1} \sup_{t' \in [t_0, t]}Z(t') \, .
\end{align}
We set $Y(t):= \sup_{t' \in [t_0, t]} Z(t')$. Observe that $Y$ is non-decreasing, continuous,  $Y(t_0)=Z(t_0)$ and 
\begin{equation}\label{eq:conteq}
Y(t) \leq C\left(Z(t_0)+   \norm{ u}_{L^{2(p-1)}(\mathbb{R}^3 \times I)}^{p-1} Y(t)\right)
\end{equation}
for any $t \in I$. Setting $C_l:= 2C$, we have by monotonicity that $Y(t) \leq C_l Z(t_0)$ for all $t\in [t_0, \bar t]$ where $\bar t := \sup \{t \in [t_0, t_1]: Y(t) \leq C_l Z(t_0) \}$. We claim that if $ \norm{ u}_{L^{2(p-1)}(\mathbb{R}^3 \times I)}^{p-1} \leq C_l^{-1}$, then $\bar t=t_1$. Assume by contradiction that $\bar t <t_1$. By continuity $Y(\bar t) = C_l Z(t_0)$ and by the validity of \eqref{eq:conteq} at $\bar t$, we obtain 
\begin{equation}
C_l Z(t_0)=Y(\bar t) \leq C Z(t_0) + C \norm{ u}_{L^{2(p-1)}(\mathbb{R}^3 \times I)}^{p-1} Y(\bar t) < 2C Z(t_0)= C_l Z(t_0) \,,
\end{equation}
which is a contradiction.
\end{proof}
We achieve an a priori bound on $(u, \partial_t u)$ in $L^\infty([0,T], \dot{H}^1 \cap \dot{H}^2 \times H^1)$, uniform in $T$, by iterating Lemma \ref{lem:localbd} on a partition $\{I_n\}_{n=1}^N$ of $[0,T]$, where the smallness assumption \eqref{eq:localbdsmall} $$\norm{u}_{L^{2(p-1)}(\mathbb{R}^3 \times I_n)} < C_l^{-\frac{1}{p-1}}$$ is satisfied by construction. Corollary \ref{cor:spacetime} is crucial to control $N$, independent on $T$, in terms of a double exponential in $E$ and $\norm{(u, \partial_t u)}_{L^\infty \dot H^1 \cap H^2 \times H^1}^\delta$. The crucial observation is that in the limit as $\delta \to 0$, $N$ is a double exponential of the energy which in turn is controlled by the initial data only. This will allow to iterate the local bound obtained in Lemma \ref{lem:localbd} on bounded sets of initial data for $\delta$ small enough.
\begin{proof}[Proof of Theorem \ref{thm:main}] Fix $(u_0, u_1) \in \dot H^1 \cap \dot H^2 \times H^1$. Consider $(u, \partial_t u)$ solution to \eqref{eq:nlw} with $p=5 + \delta$ for $\delta \in (0,1)$. We introduce the set 
\begin{equation}
\mathcal{F}:= \left \{ T \in [0, +\infty):  \norm{(u, \partial_t u)}_{L^\infty([0, T], \dot H^1 \cap \dot H^2 \times H^1)} \leq M_0 \right \} \, ,
\end{equation}
for some $M_0= M_0(\norm{(u_0, u_1)}_{\dot H^1 \cap \dot H^2 \times H^1})$ yet to be chosen large enough. We claim that $\mathcal{F} = [0, +\infty)$. For $M_0 \geq \norm{(u_0, u_1)}_{\dot H^1 \cap \dot H^2 \times H^1}$,  it is clear that $0 \in \mathcal{F}$ and by continuity, that $\mathcal{F}$ is a closed set. We show openness. Let $T \in \mathcal{F}$. By continuity, there exists $\epsilon>0$ such that for all $T' \in [0, T+\epsilon)$  we have 
\begin{equation}
\norm{(u, \partial_t u)}_{L^\infty([0, T'], \dot H^1 \cap \dot H^2 \times H^1)} \leq 2M_0 \, .
\end{equation}
Fix such a $T'$ and let us show that $T' \in \mathcal{F}$. If $\delta \leq \delta_0(2M_0)$, with $\delta_0$ given through Corollary \ref{cor:spacetime}, then
\begin{equation}\label{eq:bd}
\norm{u}_{L^{2(p-1)}(\mathbb{R}^3 \times [0, T'])} \leq \max \left \{ 1, (C E(2M_0)^\frac{\delta}{2})^{C (E (2M_0)^\frac{\delta}{2})^{352}} \right \} \, .
\end{equation}
We can split $[0, T']$ into subintervals $\{ J_i \}_{i=1}^N$ such that 
\begin{itemize}
\item $\norm{u}_{L^{2(p-1)}(\mathbb{R}^3 \times J_i)} = \frac 12 C_{l}^{-\frac{1}{p-1}}$ for $i=1, \dots, N-1\, ,$
\item  $\norm{u}_{L^{2(p-1)}(\mathbb{R}^3 \times J_N)} \leq \frac 12 C_{l}^{-\frac{1}{p-1}} \, ,$
\end{itemize}
and we deduce by iterating Lemma \ref{lem:localbd} that 
\begin{equation}\label{eq:iteratedest}
\norm{(u, \partial_t u)}_{L^\infty([0, T'], \dot H^1 \cap \dot H^2 \times H^1)} \leq C_l^N \norm{(u_0, u_1)}_{H^1 \cap \dot H^2 \times H^1} \, .
\end{equation}
Moreover, from \eqref{eq:bd} we have the upper bound  
\begin{equation}\label{eq:estonN}
N \leq 2 C_l^\frac{1}{p-1}\max \left \{ 1 ,(C E(2M_0)^\frac{\delta}{2})^{C (E (2M_0)^\frac{\delta}{2})^{352}} \right \} \,.
\end{equation}
We want to show that with an appropriate choice of $M_0= M_0(\norm{(u_0, u_1)}_{H^1 \cap \dot H^2 \times H^1} )$ and of $\delta= \delta(\norm{(u_0, u_1)}_{H^1 \cap \dot H^2 \times H^1} )$, we have 
\begin{equation}\label{eq:goalN}
N \leq  (\ln C_l)^{-1} \ln(M_0/\norm{(u_0, u_1)}_{H^1 \cap \dot H^2 \times H^1}) \,,
\end{equation}
which in view of \eqref{eq:iteratedest} implies $\norm{(u, \partial_t u)}_{L^\infty([0, T'], \dot H^1 \cap \dot H^2 \times H^1)} \leq M_0$ concluding the proof. Observe that for $M_0$ fixed, we have that the right-hand side of \eqref{eq:estonN} as $\delta \to 0$ converges, more precisely
\begin{equation}\label{eq:estonNlimit}
\lim_{\delta \to 0}2 C_l^\frac{1}{p-1}\max \left \{ 1, (C E(2M_0)^\frac{\delta}{2})^{C (E (2M_0)^\frac{\delta}{2})^{352}} \right \} = 2 C_l^\frac{1}{4}\max \left \{ 1, (C E)^{C E^{352} } \right \} \, .
\end{equation} We now choose $M_0$ such that the right-hand side of \eqref{eq:goalN} exceeds \eqref{eq:estonNlimit} by a factor $2$, that is we choose $M_0(E, \norm{(u_0, u_1)}_{H^1 \cap \dot H^2 \times H^1})$ such that 
\begin{equation}
 (\ln C_l)^{-1}\ln( M_0 / \norm{(u_0, u_1)}_{H^1 \cap \dot H^2 \times H^1}) \geq 4 C_l^\frac{1}{4} \max \left \{ 1, (C E)^{C E^{352}} \right \}  \, 
\end{equation}
or, equivalently, 
\begin{equation}\label{eq:choiceofM0}
M_0 \geq  \norm{(u_0, u_1)}_{H^1 \cap \dot H^2 \times H^1} e^{4 C_l^\frac{1}{4}\ln C_l \max \left \{ 1, (C E)^{C E^{352}} \right \}} \, .
\end{equation}
%Observe that by conservation of the energy, $E= E(u_0, u_1)$ and thus indeed the choice of $M_0= M_0(\norm{(u_0, u_1)}_{\dot H^1 \cap \dot H^2 \times H^1} $ depends on the initial datum only. 
Finally, by \eqref{eq:estonN} we can choose $\bar \delta_0= \bar  \delta_0(M_0)< \delta_0(2M_0)$ even smaller such that for all $\delta \in (0, \bar \delta_0)$ we have 
\begin{equation}\label{eq:upperboundN}
N \leq 4 C_l^\frac{1}{4}\max \left \{ 1, (C E)^{C E^{352}} \right \} \, .
\end{equation}
This finishes the proof that $F= [0,+ \infty)$ and in particular the solution $(u, \partial_t u)$ cannot blow-up.  Recalling the choice of $M_0$, we then obtain  \eqref{eq:estimateonSobolev}. As a byproduct of the upper bound \eqref{eq:upperboundN} on $N$, independent on the size of the interval, we also obtain that
\begin{equation}
\norm{u}_{L^{2(p-1)}(\mathbb{R}^3 \times [0, +\infty)} \leq \frac{1}{2} C_l^{-\frac{1}{p-1}}4 C_l^\frac{1}{4}\max \left \{ 1, (C E)^{C E^{352}} \right \} \leq 2 \max \left \{ 1, (C E)^{C E^{352}} \right \} \, ,
\end{equation}
where we used that $C_l \geq 1\, .$
\end{proof}


\begin{thebibliography}{10}

\bibitem{BahouriGerard}
H.~Bahouri and P.~Gerard. 
\newblock High frequency approximation of solutions to critical nonlinear wave equations.
\newblock {\em Amer. J. Math.}, 121(1):131--175, 1999.

\bibitem{BahouriShatah}
H.~Bahouri and J.~Shatah.
\newblock Decay estimates for the critical semilinear wave equation.
\newblock {\em Annales de l’I. H. P.}, 15(6):783--389, 1998.

\bibitem{BeceanuSoffer}
M.~Beceanu and A.~Soffer.
\newblock Large outgoing solutions to supercritical wave equations.
\newblock {\em Int. Math. Res. Not. IMRN}, 20:6201--6253, 2018.

\bibitem{Bourgain}
J.~Bourgain.
\newblock Global well-posedness of defocusing 3D critical NLS in the radial case.
\newblock {\em JAMS}, 12(1):145--171, 1999.

\bibitem{ColomboHaffter}
M.~Colombo and S.~Haffter.
\newblock Global regularity for the hyperdissipative Navier-Stokes equation below the critical order.
\newblock {\em ArXiv e-prints}, November 2019.

\bibitem{CotiVicol}
M.~Coti~Zelati and V.~Vicol.
\newblock On the global regularity for the supercritical {SQG} equation.
\newblock {\em Indiana Univ. Math. J.}, 65(2):535--552, 2016.

\bibitem{Grillakis}
M.~Grillakis.
\newblock Regularity and asymptotic behaviour of the wave equation with a critical non-linearity.
\newblock {\em Ann. of Math.}, 132(3):485--509, 1990.

%\bibitem{Joergens}
%K.~J\"orgens.
%\newblock Das Anfangswertproblem im Grossen f\"ur eine Klasse nichtlinearer Wellengleichungen.
%\newblock {\em Math. Z.}, 77:295--308, 1961.

\bibitem{KenigMerle}
C.~Kenig and F.~Merle.
\newblock Nondispersive radial solutions to energy supercritical non-linear wave equations, with applications.
\newblock {\em Amer. J. Math.}, 133(4),1029--1065, 2001.

\bibitem{KillipVisan}
R.~Killip and M.~Visan.
\newblock The defocusing energy-supercritical nonlinear wave equation in three space dimensions.
\newblock {\em Trans. Amer. Math. Soc.}, 363(7):3893--3934, 2011.


\bibitem{KriegerSchlag}
J.~Krieger and W.~Schlag.
\newblock Large global solutions for energy supercritical nonlinear wave equations on $\R^{3+1}\,.$
\newblock {\em J. Anal. Math.}, 133(1):91--131, 2017.

\bibitem{MiaoPeiYu}
S.~Miao, L.~Pei and P.~Yu.
\newblock On classical global solutions of nonlinear wave equations with large data.
\newblock {\em  Int. Math. Res. Not. IMRN},19:5859--5913, 2019.


\bibitem{Roy}
T.~Roy.
\newblock Global existence of smooth solutions of a 3D log-log energy-supercritical wave equation.
\newblock {\em Anal. PDE}, 2(3):261--280, 2009.

\bibitem{ShatahStruwe1}
J.~Shatah and M.~Struwe.
\newblock Regularity Results for Nonlinear Wave Equations.
\newblock{\em Ann. of Math.}, 138(3):503--518, 1993.

\bibitem{ShatahStruwe}
J.~Shatah and M. Struwe.
\newblock Geometric wave equations.
\newblock{\em Courent Lecture Notes in Mathematics 2}, 1998.

\bibitem{Sogge}
C.~D.~Sogge.
\newblock Lectures on Non-linear Wave Equations. Second edition.
\newblock {\em  International Press}, Boston, MA, 2008.

\bibitem{Struwe2}
M.~Struwe. 
\newblock Globally regular solutions to the $u^5$ Klein-Gordon equation.
\newblock {\em Ann. Scuola Norm. Sup. Pisa Cl. Sci. (4)}, 15(3):495--513,  1988.

\bibitem{Struwe3}
M.~Struwe.
\newblock Global well-posedness of the Cauchy problem for a super-critical nonlinear wave equation in two space dimensions.
\newblock {\em Math. Ann.}, 350(3):707--719, 2011.

%\bibitem{Struwe4}
%M.~Struwe.
%\newblock A ‘Super-Critical’ Nonlinear Wave Equation in 2 Space Dimensions.
%\newblock {\em Milan J. Math.}, 79(1):129–143, 2011.

\bibitem{Tao}
T.~Tao.
\newblock Global regularity for a logarithmically supercritical defocusing nonlinear wave equation for spherically symmetric data.
\newblock {\em J. Hyperbolic Differ. Equ.}, 4(2):259--265, 2007.

\bibitem{Tao2}
T.~Tao.
\newblock Spacetime bounds for the energy-critical nonlinear wave equation in three spatial dimensions.
\newblock {\em Dyn. Partial Differ. Equ.},3(2):93--110, 2006.

\bibitem{Tao3}
T.~Tao.
\newblock Nonlinear Dispersive Equations: Local and Global Analysis.
\newblock {\em CBMS Regional Conference Series in Mathematics, 106. American Mathematical Society}, 2006.

\bibitem{WangYu}
J.~Wang and P.~Yu. 
\newblock A large data regime for nonlinear wave equations
\newblock {\em J. Eur. Math. Soc.}, 18(3):575--622, 2016.

\end{thebibliography}
\end{document}